\documentclass[12pt]{amsart}
\usepackage{amsmath}
\usepackage{amsthm}
\usepackage{color}
\usepackage{cite}
\usepackage{mathrsfs}
\usepackage{amssymb,amsmath,amsthm,color}
\usepackage{graphicx, cite}
\usepackage{hyperref}
\usepackage{url}
\usepackage{setspace}
\usepackage{enumerate}

\usepackage{tikz}
\usepackage{amssymb}
\usepackage{mathrsfs}
\usepackage{latexsym}
\theoremstyle{plain}
\newtheorem{thm}{Theorem}[section]
\newtheorem{lem}[thm]{Lemma}
\newtheorem{cor}[thm]{Corollary}
\newtheorem{prop}[thm]{Proposition}
\theoremstyle{definition}
\newtheorem{rem}[thm]{Remark}

\newtheorem{defn}[thm]{Definition}

\numberwithin{equation}{section}

\newcommand{\R}{\mathbb{R}}
\newcommand{\C}{\mathbb{C}}

\newcommand{\N}{\mathbb{N}}

\newcommand{\U}{U_{\alpha}(t)}
\newcommand{\pa}{\partial}

\newcommand{\Lp}{\widehat{L^p}}

\newcommand{\HSP}{\widehat{H^{s}_p}}
\newcommand{\HSCP}{\widehat{H^{s_c}_p}}
\newcommand{\HHSP}{\widehat{\dot{H}^{s}_p}}
\newcommand{\HHSCP}{\widehat{\dot{H}^{s_c}_p}}

\newcommand{\A}{\alpha}

\newcommand{\simleq}{
\hspace{0.3em}\raisebox{0.4ex}{$<$}\hspace{-0.75em}\raisebox{-.7ex}{$\sim$}\hspace{0.3em}}

\newcounter{kotaeflg}
\newcommand{\kotae}[1]{
\ifodd \arabic{kotaeflg}
#1
\fi
}

\begin{document}

\title[Cauchy problem for fractional NLS]%
      {Fefferman--Stein-type estimates and  fractional NLS\\ in Fourier Sobolev spaces}

\author[Divyang G. Bhimani]{Divyang G. Bhimani}
\address{Divyang G. Bhimani\\Department of Mathematics\\Indian Institute of Science Education and Research\\ Pune 411008\\India}
\email{divyang.bhimani@iiserpune.ac.in}
      
\author[Ryosuke Hyakuna]{Ryosuke Hyakuna}
\address[Polytechnic University of Japan]{}
\email{107r107r@gmail.com}
\keywords{fractional Laplacian,  nonlinear Schr\"odinger equation, Strichartz estimates, global well-posedness, Fourier Sobolev spaces }
\subjclass[2010]{Primary 35Q55; Secondary }
\date{\today}
\begin{abstract}
In this paper, we prove a sharp Fefferman--Stein-type estimate for the fractional Schr\"odinger equation, which can be regarded as a generalized Strichartz estimate for data in the Fourier Lebesgue space $\Lp$.  Then, as  an application of the Fefferman--Stein inequality and its off-diagonal generalization, we prove large data local well-posedness and small data global well-posedness results for the one dimensional fractional nonlinear Schr\"odinger equation with pure power nonlinearities in the homonegeneous and inhomogeneous Fourier--Sobolev spaces $\HHSP,\HSP$.  Solutions are established in $L_x^r(\R ;L^q_t(I))$ spaces in order to overcome the difficulty of a loss of derivatives in the standard Strichartz estimates.
\end{abstract}
\maketitle
\tableofcontents
\section{Introduction and Main Results}
\subsection{Fefferman--Stein-type Estimates}The Cauchy problem for the fractional nonlinear Schr\"odinger equation (NLS for short) in $\R^d,\,d \ge 1$ is
\begin{equation}
iu_t+(-\Delta)^{\A/2}u +N(u)=0,\quad u|_{t=0}=\phi, \label{gFNLS}
\end{equation}
where $\A \in (0,2]\setminus \{1\}$ and $N(u)$ is a nonlinearity. 

The fractional Schr\"odinger equations were
introduced in the theory of the fractional quantum mechanics where the Feynmann
path integral approach is generalized to the $\alpha$-stable L\'evy process \cite{LaskinFracLap}.  It also appears in the water wave models, see e.g. \cite{IonescuFracNLS}.

When $\A=2$, \eqref{gFNLS} is the well-known nonlinear Schr\"odinger equation.  Cauchy problem \eqref{gFNLS} has been extensively studied in recent years.  In particular, well-posedness results in the Sobolev space $H^s$ were established for $N(u)=(|x|^{-\gamma} \ast |u|^2)u$ in \cite{Cho_2013_frHartreeNLS}, for $N(u)=|u|^{\rho-1}u$ in \cite{HongSire_2015_frNLS}, and for the 1D cubic nonlinearity in \cite{Cho_2015_well_ill_NLS}.  As is well known, the Strichartz estimates are powerful tool to establish solutions to the nonlinear Schr\"odinger equations. Their generalization to the fractional equation is also a key to obtain well-posedness of \eqref{gFNLS} in  these earlier studies.  The following Strichartz  estimates for the fractional Schr\"odinger equation are established in \cite[Theorem 2]{Cho_2011_disperEst}: 
\begin{equation}
    \left\||D_x|^{ -d(1-\frac{\A}{2})(\frac{1}{2}-\frac{1}{r})} U_{\A} (t) \phi\right\|_{L^q(\R ;L^r(\R^d))} \le C\|\phi \|_{L^2(\R^d)},
    \label{L2Str}
\end{equation}
where $\A\in (0,\infty)\setminus \{ 1\}$ and $q,r$ satisfy $2\le q,r\le \infty$, $2/q+d/r=d/2$ and $(q,r,d)\neq (2,\infty,2)$ and $|D_x|^s f$ is the homogeneous derivative of order $s,\,s\in\R$, that is, $$\widehat{|D_x|^s f} (\xi) =c |\xi |^s \hat{f} (\xi)$$ and \begin{equation*}
    \left[U_{\A}(t)\phi\right] (x) =\int_{\R^d}
    e^{ix\xi -it|\xi|^{\A}} \hat{\phi} (\xi )d \xi.
\end{equation*}
Here $\hat{\phi}$ denotes the Fourier transform of $\phi.$
As far as the authors know, the Strichartz type estimate for the fractional Schr\"odinger equation can be traced back to  \cite[Theorem 2.1]{Kenig_1991_OsciIntDisp}, where the authors proved  \eqref{L2Str} for $\A\neq 1$ in one space dimension.  In particular, the diagonal case of \eqref{L2Str} in one space dimension is
\begin{equation}
    \||D_x|^{\frac{\A-2}{6} } U_{\A}(t) \phi \|_{L^{6}_{xt}(\R^2)} \le C\| \phi \|_{L^2(\R)}. \label{dSFS}
\end{equation}
This study is motivated by the problem of extending (\ref{dSFS}) for data $\phi\notin L^2$.  For the Cauchy problem of the standard nonlinear Schr\"odinger equation, there have been several attempts to establish well-posedness in non-$L^2$-based spaces.  In  \cite{Cazenave_2001_NLS,Hyakuna_2012_FLp_NLS,Grunrock_2005_BiTriEst}, local well-posedness of the 1D nonlinear Schr\"odinger equation was established in the \textbf{Fourier--Lebesgue space} $\Lp$ and \textbf{Fourier--Sobolev space} $\HSP$ defined by
\begin{equation*}
    \Lp (\R) :=\{\phi \in \mathcal{S}'(\R)\,| \hat{\phi } \in L^{p'}(\R) \}
\end{equation*}
and 
\begin{equation*}
    \HSP (\R):= \{ \phi \in \mathcal{S}'(\R) \,| (1+\xi^2)^{s/2} \hat{\phi}(\xi) \in L^{p'}(\R)\,\},
\end{equation*}
respectively, where $1/p+1/p'=1$(throughout the paper, primes denote conjugate exponents).  A key estimate to the local well-posedness results in these earlier works is
\begin{equation}
    \| U_2(t) \phi \|_{L^{3p}_{xt}(\R^2)} \le C\| \phi \|_{\Lp},\label{SFS}
\end{equation}
which holds true for $p>4/3$.  This can be viewed as a generalization of the standard (diagonal) Strichartz estimate and is also called the \textbf{Fefferman--Stein estimate}, see  \cite{Cazenave_2001_NLS, Fefferman_1970_SingConvo} or the introduction in \cite{Grunrock_2004_LWmKdV}.  The estimate plays an essential role also in the study of the nonlinear Schr\"odinger equations in non-$L^2$-based spaces such as 
Fourier-Lebesgue and Fourier Sobolev spaces.  Thus one may expect that a generalization of \eqref{SFS} to the fractional equation leads to the developent of  a well-posedness theory of the 1D fractional nonlinear Schr\"odinger equation in $\HSP$.  The first aim of this paper is to obtain a Fefferman--Stein type estimate for the fractional Schr\"odinger equation in order to establish well-posedness of the nonlinear Cauchy problem.  More precisely, we prove: 

\begin{thm} \label{FS}
Let $4/3<p < \infty$ and $\A \in (0,2] \setminus \{1 \}$.  Then the estimate
\begin{equation}
    \left\| |D_x|^{\frac{\A-2}{3p}} U_{\A}(t) \phi\right\|_{L^{3p}_{xt} (\R^2)} \le C\|\phi\|_{\Lp},\label{eqFS}
\end{equation}
holds true.
\end{thm}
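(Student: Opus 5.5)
Following the classical Fefferman--Stein argument for \eqref{SFS}, I will establish \eqref{eqFS} by a change of variables that turns $U_{\A}(t)\phi$ into a two-dimensional Fourier extension operator over the curve $\tau=|\xi|^{\A}$, and then apply a restriction-type inequality for curves with a suitable weight. Write
\begin{equation*}
|D_x|^{\frac{\A-2}{3p}}U_{\A}(t)\phi(x)=\int_{\R} e^{ix\xi-it|\xi|^{\A}}|\xi|^{\frac{\A-2}{3p}}\hat\phi(\xi)\,d\xi ,
\end{equation*}
and split $\hat\phi=\hat\phi\chi_{\xi>0}+\hat\phi\chi_{\xi<0}$. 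By symmetry it suffices to treat $\xi>0$. This is the extension operator $Eg(x,t)=\int_0^\infty e^{i(x\xi-t\xi^{\A})}g(\xi)w(\xi)\,d\xi$ for the curve $\gamma(\xi)=(\xi,-\xi^{\A})$, whose curvature is comparable to $|\xi|^{\A-2}$.

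The key tool is the sharp extension estimate for plane curves with affine arclength measure (Drury, Sjölin; or Carbery--Kenig--Ziesel / Oberlin):
\begin{equation*}
\Big\|\int e^{i(x\xi+t\psi(\xi))}g(\xi)\,|\psi''(\xi)|^{1/3}d\xi\Big\|_{L^{3p}_{xt}}\lesssim \Big(\int |g|^{p'}|\psi''|^{1/3}d\xi\Big)^{1/p'},\qquad p>4/3,
\end{equation*}
which holds uniformly for convex (here homogeneous) $\psi$. Note that the exponent relation $q=3p$, $q>4$, is exactly the Zygmund/Carleson--Sjölin range. For $\psi(\xi)=-\xi^{\A}$ we have $|\psi''|^{1/3}\sim\xi^{(\A-2)/3}$. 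I will set $g=\hat\phi\,\xi^{\frac{\A-2}{3p}-\frac{\A-2}{3}}$. Then the left side is our operator, and the right side becomes
\begin{equation*}
\Big(\int|\hat\phi|^{p'}\xi^{(\A-2)(\frac1p-1)\frac{p'}{3}+\frac{\A-2}{3}}d\xi\Big)^{1/p'}=\|\hat\phi\|_{L^{p'}},
\end{equation*}
since $(\frac1p-1)p'=-1$. So the weights cancel exactly, and this explains the derivative power $\frac{\A-2}{3p}$. Scaling confirms the exponent: under $\phi\mapsto\phi(\lambda\cdot)$ both sides scale identically, so no other power is possible.

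I therefore expect the main obstacle to be justifying the weighted curve estimate with constants uniform over the degenerate (at $\xi=0$ when $\A>2$ is excluded; here $\A<2$ gives curvature blowing up at $0$ and decaying at $\infty$) curve. If I cannot simply cite it, I will prove it directly. First, by dyadic decomposition $\xi\sim 2^k$ and parabolic rescaling, each piece reduces to the standard Fefferman--Stein estimate on a curve with curvature $\sim1$ (essentially \eqref{SFS}), with constants consistent with the weights by scaling. Second, to sum the dyadic pieces, I will avoid losing a factor by following the original Fefferman--Stein/Sjölin argument globally. One writes $\|Eg\|_{3p}^2=\|(Eg)^2\|_{3p/2}$, and changes variables $(\xi_1,\xi_2)\mapsto(\xi_1+\xi_2,\psi(\xi_1)+\psi(\xi_2))$ with Jacobian $|\psi'(\xi_1)-\psi'(\xi_2)|$. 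Then one applies Hausdorff--Young with exponent $(3p/2)'$, changes variables back, and bounds the resulting bilinear form using $|\psi'(\xi_1)-\psi'(\xi_2)|\gtrsim|\xi_1-\xi_2|\,(\psi''(\xi_1)\psi''(\xi_2))^{1/2}$. This mean-value type inequality holds for monotone-power curvature, $\A\neq1$. The remaining step is the one-dimensional Hardy--Littlewood--Sobolev inequality for $|\xi_1-\xi_2|^{-(1-\frac{?}{})}$, whose admissibility requires precisely $p>4/3$. The case $p=\infty$ is excluded, and the range $3p<\infty$ keeps the HLS exponent nondegenerate. The sign change of $\xi$ is handled by treating the two half-lines separately, as already arranged.
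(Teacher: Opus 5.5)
Your main route is correct, and it differs genuinely from the paper's. You quote the uniform affine-arclength extension theorem for convex plane curves (Sjölin; see also Oberlin). This is the dual of the restriction estimate $L^{(3p)'}(\R^2)\to L^{p}(|\psi''|^{1/3}d\xi)$, available for $3p>4$. Taking $g=\hat\phi\,\xi^{\frac{\alpha-2}{3p}-\frac{\alpha-2}{3}}$ then yields \eqref{eqFS}:
\begin{itemize}
\item the weights cancel exactly as you compute;
\item each half-line piece is a convex graph with $|\psi''|=\alpha|\alpha-1|\xi^{\alpha-2}$ monotone;
\item uniformity of the constant lets you pass from arcs over $[\varepsilon,\varepsilon^{-1}]$ to the half-line by monotone convergence.
\end{itemize}
This route explains the loss $|D_x|^{\frac{\alpha-2}{3p}}$ conceptually: it is the affine-arclength density raised to the power $1/p$. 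It also handles both signs of $\alpha-1$ with one argument. The paper in effect proves this special case of the affine-arclength theorem by hand, via Hausdorff--Young after the change of variables, Lemma~\ref{JestL}, and Stein--Weiss. That is self-contained, but it forces separate treatment of $\alpha>1$ and $\alpha<1$.

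The self-contained fallback you sketch, however, fails at its key step. The bound $|\psi'(\xi_1)-\psi'(\xi_2)|\gtrsim|\xi_1-\xi_2|\,(\psi''(\xi_1)\psi''(\xi_2))^{1/2}$ is false for every $\alpha\in(0,2)\setminus\{1\}$. Take $\xi_2=1$ and $\xi_1=x\to\infty$. The left side is $\sim x^{\alpha-1}$ (or $\sim 1$ if $\alpha<1$), while the right side is $\sim x^{\alpha/2}$. For $\psi'\propto\xi^{\beta}$ the inequality holds iff $|\beta|\ge 1$, so in your range it holds only at $\alpha=2$.

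That geometric-mean bound is exactly what would split the weights symmetrically and let plain HLS close with kernel $|\xi_1-\xi_2|^{-2/(3p-2)}$. What is actually true is the one-sided bound of Lemma~\ref{JestL}: for $1<\alpha<2$, $|\psi'(\xi_1)-\psi'(\xi_2)|\gtrsim|\xi_1-\xi_2|\max(\xi_1,\xi_2)^{\alpha-2}$. This leaves an extra factor $\bigl(\max(\xi_1,\xi_2)/\min(\xi_1,\xi_2)\bigr)^{(2-\alpha)/(3p-2)}$ in the kernel. That factor is unbounded as $\min(\xi_1,\xi_2)\to 0$, and unweighted HLS cannot absorb it.

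The missing ingredient is the Stein--Weiss inequality (Proposition~\ref{WHLS}) with weights $|\xi_1|^{-k_1}|\xi_2|^{-k_2}$, $k_1=-k_2$. Its condition $k_2<1/q_2$ holds for $k_2=(2-\alpha)/(3p-2)$ only when $\alpha>1$. This is why the paper switches to \eqref{Jest2}, giving $k_2=\alpha/(3p-2)$, when $\alpha<1$.

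The dyadic-rescaling variant does not rescue the argument either. The naive Littlewood--Paley summation of the pieces needs $\ell^{p'}\subset\ell^2$, i.e.\ $p\ge 2$. That is precisely the range where the estimate already follows by interpolating \eqref{dSFS} with the trivial case $p=\infty$.

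So your proof stands if you rely on the citation. To make it self-contained, replace the geometric-mean step by Lemma~\ref{JestL} together with Stein--Weiss.
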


Observe that putting $p=2$ in \eqref{eqFS} gives \eqref{dSFS}.  Thus Theorem \ref{FS} can be regarded as a generalization of the diagonal case of the Strichartz estimate for the one dimensional fractional Schr\"odinger equation.  Note also that putting $\A=2$ gives \eqref{SFS}, which implies \eqref{eqFS} is an extension of the Fefferman--Stein estimate to fractional equations.  To prove Theorem \ref{FS} we write $|U_{\A}\phi \overline{U_{\A}\phi}|^2$ as  the double integral
\begin{equation}
    U_{\A}(t) \phi \overline{ U_{\A}(t) \phi}=\iint_{\R\times \R} e^{ix(\xi_1-\xi_2)+it(|\xi_2|^{\A}-|\xi_1|^{\A})} 
\hat{\phi}(\xi_1) \overline{\hat{\phi}(\xi_2)} d\xi_1 d\xi_2.\label{introbilinear}
\end{equation}
Then, the $L^{3p/2}$-norm of this integral is estimated by the Hausdorff--Young inequality after changes of variables.  In particular, a suitable estimate of the associated Jacobian determinant leads to the derivatives appearing in \eqref{eqFS}, which is a key in the proof.  This kind of approach is well known.  For example, it is used to obtain the estimate of Fefferman and Stein type for the Airy equation in \cite{Masaki_2016_FLpKdV}.  It is easy to show \eqref{SFS} for the standard Schr\"odinger equation ($\A=2)$ by this method, since there is no gain or loss of derivatives there.  The case of the fractional Schr\"odinger equation  is more difficult than these cases.  Our key estimates to prove Theorem \ref{FS} are some estimates for the difference of power functions and a weighted version of the Hardy--Littlewood--Sobolev-type inequality established by Stein and Weiss \cite{SteinWeiss_1958_FrcInt}.  These estimates enable us to derive the desired loss of derivatives $|D_x|^{(\A-2)/3p}$ in \eqref{eqFS} from the Jacobian determinant and a bound for the norm of the bilinear form \eqref{introbilinear}  in terms of the $\Lp$-norm of data. 

We summarize additional comments on Theorem \ref{FS} below:

\begin{rem}
\begin{enumerate}
\item 
The estimate \eqref{eqFS} holds true for $p=\infty$ with the convention that $1/\infty=0$.  This case is trivial.
\item 
The condition  $p>4/3$ is required to apply the Hausdorff--Young inequality to obtain an $L^{3p/2}$-bound of the above double integral.  Indeed, the estimate fails if $p\le 4/3$ when $\A=2$ (see \cite{Grunrock_2005_BiTriEst}).  So we cannot expect \eqref{eqFS} to hold for $p \le 4/3$ for the fractional case, and thus we may call it  sharp in this sense.

\item In this paper, we focus only on the lower order cases $\A\le 2$ and we do not consider the case $\A>2$.  However, our approach to prove Theorem \ref{FS} can cover higher order cases.  Indeed, the estimate \eqref{eqFS} also holds true for $\A \in [2,3)$.  See Remark \ref{FSremark2} following the proof of Theorem \ref{FS}. The higher order fractional NLS is also interesting and it will be treated in our forthcoming work.
\item 
 It would be interesting to study estimates of the form \eqref{eqFS} of for more general dispersive equations.  When $p=2$, these dispersive estimates are established in \cite{Cho_2011_disperEst,Kenig_1991_OsciIntDisp}.  However, to our knowledge, there is no earlier works that generalized these estimate for $p\neq 2$.
 \item It is also interesting to consider estimates of type \eqref{eqFS} in higher space dimensions.  The case $p>2$ is straightforward.  Indeed, it follows immediately from interpolation between the trivial case $p=\infty$ (see (i)) and the standard Strichartz estimates for $\phi \in L^2$.  On the other hand, the case $p<2$ is much more difficult.  Indeed, not much is known about similar estimates for $p<2$ even in the case of the standard Schr\"odinger equation ($\A=2$).  See e.g. \cite{Cazenave_2001_NLS}. 
\end{enumerate}
\end{rem}

Off-diagonal generalizations of \eqref{eqFS} are easily obtained by interpolation with the standard Strichartz estimates \eqref{L2Str}: The estimate
\begin{equation}
    \| |D_x|^{\sigma} U_{\A}(t)\phi \|_{L^q_t (\R;L^r_x)} \le C\|\phi \|_{\Lp} \label{StrLqLr}
    \end{equation}
holds true for suitable $\sigma ,q,r$.  We do not give a precise statement of \eqref{StrLqLr} here.  In fact, we do not use this kind of estimate in our application to non-linear Cauchy problems because of the difficulty of a loss of derivatives.  In this paper, we present another kind of off-diagonal generalization of the Fefferman--Stein-type estimate.  Instead of the standard Strichartz space $L^q(I ; L^r)$, we consider $L^r_x L_t^q(I)$ spaces defined by
\begin{equation*}
    \|f \|_{L^r_x L_t^q(I)}
    := \left( \int_{\R} \| f(\cdot, x) \|_{L^q (I)}^r dx \right)^{\frac{1}{r}}
\end{equation*}
for $r<\infty$ and
\begin{equation*}
    \|f \|_{L^{\infty}_x L_t^q(I)}
    := {\rm ess\,sup}_{x\in \R}\| f(\cdot, x) \|_{L^q (I)},
\end{equation*}
where $I\subset \R$ is an interval.  A merit of using this kind of spaces is that we can exploit a gain of derivative in these functional framework.  Indeed, in \cite{Kenig_1991_OsciIntDisp}, Kenig, Ponce and Vega obtained the following smoothing effect:
\begin{equation}
\| |D_x|^{\frac{1}{2}}U_2(t) \phi\|_{L^{\infty}_x L_t^2 (\R)} \le C \|\phi \|_{L^2}.
\end{equation}
We have a similar smoothing property (see Lemma \ref{SmoothingL2}) for the fractional Schr\"odinger equation as long as $\A>1$:
\begin{equation}
\| |D_x|^{\frac{\A-1}{2}}U_{\A}(t) \phi\|_{L^{\infty}_x L_t^2 (\R)} \le C \|\phi \|_{L^2}. \label{introsmoothing}
\end{equation}
Interpolating this estimate with the Fefferman--Stein type inequality and the maximal function estimate (see Proposition \ref{maximalfunction} ), we get the
following off-diagonal generalizations of Fefferman--Stein estimate \eqref{FS}.
\begin{cor} \label{coroffGFS}
Let $q> 2,\,r> 4$ and $p>4/3$ satisfy
\begin{equation}
    \frac{1}{q}+\frac{2}{r}=\frac{1}{p}
    \label{scaling}
\end{equation}
and
\begin{equation*}
    0< \frac{1}{r} <\min \left(\frac{1}{4},\, \frac{1}{2}-\frac{1}{q}\right).
\end{equation*}
    Then, the estimate
    \begin{equation}
        \left\| |D_x|^{\sigma} U_{\A}(t) \phi\right\|_{L_x^r(L_t^q)} \le C\|\phi\|_{\Lp} \label{OffdGFS}
    \end{equation}
    holds true, where
    \begin{equation}
    \sigma=\frac{\A}{q}+\frac{1}{r}-\frac{1}{p}. 
    \label{dgain}
    \end{equation}
\end{cor}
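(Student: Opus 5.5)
The plan is to obtain \eqref{OffdGFS} by complex interpolation among four endpoint estimates, all of which already have the form \eqref{OffdGFS} with $\sigma$ given by \eqref{dgain}:
\begin{enumerate}[(a)]
\item the Fefferman--Stein estimate of Theorem \ref{FS}, $(1/r,1/q,1/p)=(\tfrac{1}{3p_0},\tfrac{1}{3p_0},\tfrac1{p_0})$ with $p_0>4/3$ and $\sigma=(\A-2)/3p_0$;
\item the smoothing estimate \eqref{introsmoothing} (Lemma \ref{SmoothingL2}), $(1/r,1/q,1/p)=(0,\tfrac12,\tfrac12)$ with $\sigma=(\A-1)/2$;
\item the maximal function estimate of Proposition \ref{maximalfunction}, which I take in the form $\||D_x|^{-1/4}U_\A(t)\phi\|_{L^4_xL^\infty_t}\le C\|\phi\|_{L^2}$, i.e. $(1/r,1/q,1/p)=(\tfrac14,0,\tfrac12)$ with $\sigma=-1/4$;
\item the trivial estimate $\|U_\A(t)\phi\|_{L^\infty_{xt}}\le \|\hat\phi\|_{L^1}$, i.e. the point $(0,0,0)$ with $\sigma=0$.
\end{enumerate}
One checks directly that each point satisfies \eqref{scaling} and that its $\sigma$ equals $\A/q+1/r-1/p$. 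Since both \eqref{scaling} and \eqref{dgain} are affine in $(1/r,1/q,1/p)$, they are preserved under interpolation. It therefore suffices to show that every admissible $(1/r,1/q)$ lies in the convex hull of these points, taken in the plane parametrized by $(1/r,1/q)$ with $1/p=1/q+2/r$.

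The geometric step is as follows. The points of type (a) fill the open diagonal segment $1/q=1/r\in(0,1/4)$, point (b) is $(0,1/2)$, point (c) is $(1/4,0)$, and point (d) is the origin. The segment from $(0,1/2)$ to the limiting diagonal point $(1/4,1/4)$ is the line $1/q=1/2-1/r$. Hence the hull of these points, after removing the excluded boundary pieces, is exactly the region $0<1/r<1/4$, $1/q<1/2-1/r$, $q>2$. This region is the hypothesis of Corollary \ref{coroffGFS}. Because all the conditions are strict, any admissible point lies in the interior of a triangle whose vertices are a genuine FS point (with $p_0>4/3$ chosen close enough to the corner $1/4$) together with two of (b), (c), (d). So two successive two-point interpolations, or a single three-point one, suffice.

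For each interpolation step I would use Stein's analytic interpolation for the family
\[
T_z\phi=|D_x|^{\sigma(z)}U_\A(t)\phi ,
\]
with $\sigma(z)$ affine in $z$. The imaginary part of the derivative is harmless if it is placed on the data side: $|D_x|^{\sigma_j+iy}U_\A\phi=|D_x|^{\sigma_j}U_\A(|D_x|^{iy}\phi)$, and $\||D_x|^{iy}\phi\|_{\Lp}=\|\phi\|_{\Lp}$ because the multiplier is unimodular. Hence the boundary bounds are uniform in $y$. The spaces $\Lp$ are isometric to $L^{p'}$ via the Fourier transform, so they interpolate like Lebesgue spaces, and the mixed-norm spaces $L^r_xL^q_t$ interpolate by the Benedek--Panzone theory: $[L^{r_0}_xL^{q_0}_t,L^{r_1}_xL^{q_1}_t]_\theta=L^r_xL^q_t$ with the expected exponents.

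I expect the main obstacle to be this technical justification rather than the exponent bookkeeping. The endpoints include $L^\infty$ in $x$ or $t$, where complex interpolation of mixed norms must be done by duality, with $L^{r'}_xL^{q'}_t$ on the dual side. One must also verify the admissible-growth hypothesis for $T_z$. I would handle both by working with $\phi$ whose Fourier transform is a simple function supported away from $\xi=0$ and $\xi=\infty$, so that $|\xi|^{\sigma(z)}$ is bounded and everything is analytic and of at most exponential growth. I would pair $T_z\phi$ against test functions $g_z$ that are simple in $(x,t)$, and then conclude by density.
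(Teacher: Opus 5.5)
Your proposal is correct and is the argument the paper uses. The paper obtains Corollary \ref{coroffGFS} in one line by interpolating three estimates: the Fefferman--Stein estimate of Theorem \ref{FS}, whose trivial $p=\infty$ case is your point (d), the vertex $(0,0)$ of the trapezoid in Figure \ref{fig:acceptableintro}; the smoothing estimate of Lemma \ref{SmoothingL2}; and the maximal function estimate of Proposition \ref{maximalfunction}. Your Stein analytic interpolation (using that $|D_x|^{iy}$ is an isometry on $\Lp$), mixed-norm duality and density argument supply the technical details the paper leaves implicit.
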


Corollary \ref{coroffGFS} will play a vital role in our well-posedness results.  We conclude this subsection by making the following definition. 
\begin{defn}[$(\A,p)$-acceptability]
 Let $p>4/3$.  We say that a triplet $(q,r,\sigma)$ is $(\A,p)$-acceptable if $q,r,\sigma$ satisfy the assumption of Corollary \ref{coroffGFS}.  
\end{defn}

It would be clear that \eqref{dgain} is necessary from scaling considerations.  Indeed, it follows immediately by replacing $\phi$ with $\phi_{\lambda} (x):=\phi (\lambda x),\,\lambda >0$.  \eqref{scaling} arises from the inequalities used in the interpolation.  In the rest of the paper, for convenience, we occasionally refer to \eqref{scaling} and \eqref{dgain} as the ``scaling conditions" of $(\A,p)$-acceptability.  In Figure \ref{fig:acceptableintro} below, a triplet $(q,r,\sigma)$ satisfying \eqref{dgain} is $(\A,p)$-acceptable for some $p>4/3$ if $(1/r,1/q)$ lies in the shaded area i.e., inside the trapezoid.  Although \eqref{OffdGFS} holds true for some $(1/r,1/q)$ on the boundary of the trapezoid (for example $(1/2,0)$ corresponds to \eqref{introsmoothing}), we exclude them from the acceptability for simplicity.


\begin{figure}[htbp]
  \centering

\begin{tikzpicture}[scale=20]

\fill[gray!30] (0,0) -- (0.5,0) -- (0.25,0.25) -- (0,0.25) -- cycle;

\draw[->] (0,0) -- (0.55,0) node[right] {$\frac{1}{q}$};
\draw[->] (0,0) -- (0,0.30) node[left] {$\frac{1}{r}$};

\draw (0.5,0) node[below] {$\frac12$};
\draw (0.25,0) node[below] {$\frac14$};
\draw (0,0.25) node[left] {$\frac14$};

\draw[dashed] (0,0.25) -- (0.25,0.25);
\draw[dashed] (0.25,0.25) -- (0.5,0);
\draw[dashed] (0.25,0) -- (0.25,0.25);

\draw[thick]
  (0,0) -- (1/4,1/4); 




\filldraw[fill=white,draw=black] ({1/4},{1/4}) circle[radius=0.15pt];


\filldraw[fill=black,draw=black] ({0},{1/4}) circle[radius=0.15pt];

\filldraw[fill=black,draw=black] ({1/2},{0}) circle[radius=0.15pt];

----------------------------------

\draw[->,>=stealth] (0.3,0.24) -- (0.22,0.21);

\draw (0.37,0.24) node[left,xshift=32pt] {\scriptsize Fefferman--Stein};


----------------------------------
\draw[->,>=stealth] (0.035,0.265) -- (0.01,0.255);

\draw (0.18,0.27) node[left,xshift=32pt] {\scriptsize maximal function estimate};

----------------------------------
\draw[->,>=stealth] (0.5,0.05) -- (0.5,0.01);

\draw (0.54,0.052) node[left,xshift=32pt] {\scriptsize smoothing estimate};

\end{tikzpicture}

\caption{$(q,r,\sigma)$ is $(\A,p)$-acceptable for some $\sigma$ and $p$ if $(1/q,1/r)$ lies in the shaded area.}
  \label{fig:acceptableintro}
\end{figure}


\subsection{1D Fractional NLS }
Once the generalized Strichartz type estimate \eqref{eqFS} is established, our second aim is to obtain well-posedness results in $\HSP$ for the Cauchy problem of the one dimensional fractional nonlinear Schr\"odinger equation with the
pure power type nonlinearity
\begin{equation}
iu_t+(-\pa_{xx})^{\A/2}u +|u|^{\rho-1}u=0,\quad u|_{t=0}=\phi. \label{FNLS}
\end{equation}

Although \eqref{eqFS} is a key to the well-posedness of \eqref{FNLS} in $\HSP$, there are still difficulties.  One of them is the existence of a loss of derivatives in the estimate.  Because of this, it is not easy to apply the usual Strichartz technique, which is well known for solving the standard nonlinear Schr\"odinger equations.  As is easily seen from \eqref{L2Str}, this difficulty already appears in the case $p=2$.  In \cite{HongSire_2015_frNLS} Hong and Sire were able to establish well-posedness results in $H^s(=\widehat{H^s_2})$ for the power type fractional NLS.  Their approach relies on a direct $H^s$ estimate of the nonlinearity and Sobolev's embedding.  Thus, one possible way to obtain well-posedness in $\HSP$ is to generalize their approach to the Fourier Lebesgue and Sobolev setting.  However, there is another difficulty.  It is not easy to extend their key nonlinear estimates and Sobolev inequality to the non-$L^2$-based spaces.  In fact, the authors' do not know whether or not similar nonlinear estimates hold true in the Fourier Sobolev spaces with $p\neq 2$, and it is known that some Sobolev type inequalities fail in the Fourier Lebesgue and Sobolev spaces (see e.g. \cite{Masaki_2016_FLpKdV}).  In this paper, we give up extending their approach and employ another functional framework.  Instead of the standard Strichartz space $L^q(I ; L^r)$ or $L^q(I; H^s_r)$, we establish a solution of \eqref{FNLS} in the aforementioned $L^r_x L_t^q(I)$ spaces.  A merit of using this kind of spaces is that we can take advantage of a gain of derivatives.  Indeed, thanks to the smoothing estimate \eqref{introsmoothing}, taking $\sigma>0$ is allowed for some choices of $q,r,p$.  Well-posedness in non-$L^2$-based Fourier--Sobolev spaces $\HSP$ is obtained from \eqref{eqFS} via the duality argument.  Let us briefly explain the strategy.  By duality, the Fefferman--Stein estimate   is equivalent to
\begin{equation}
\sup_I \left\| \int_I U_{\A}(-s) F(s)ds\right\|_{\widehat{L^{p'}} } \le C\||D_x|^{-\sigma}F\|_{L_x^{r'}L_t^{q'}}, \label{GFSdual}
\end{equation}
where the supremum is taken over all intervals on $\R$ (see Proposition \ref{inhomoest}). This gives us an estimate of the $\HSP$-norm of the nonlinear part of the integral equation equivalent to \eqref{FNLS} when $p<4$.  Consequently, the Fefferman--Stein type estimate \eqref{eqFS} and its dual along with inhomongeneous Strichartz type estimates in $L_x^rL_t^q$ spaces (see Proposition \ref{inhomoest}) and the trivial identity $\|U_{\A}(t)\phi \|_{\HSP}=\|\phi \|_{\HSP}$ enable us to estimate the integral equation in the space $L^{\infty}([0,T] ;\HSP)\cap L_x^rL_t^q([0,T])$ as long as $4/3<p<4$.  This kind of argument was used by M. Tsutsumi and the second author \cite{Hyakuna_2012_FLp_NLS} to obtain the well-posedness of the standard pure power NLS in $\Lp$ spaces in the framework of the standard Strichartz spaces $L_t^q([0,T]; L^r_x)$.  The one in $L_x^rL_t^q$ spaces was developed by Masaki and Segata in \cite{Masaki_2016_FLpKdV} to prove small data well-posedness for generalized Kortweg--de Vries equation in critcal $\Lp$ spaces and we follow their approach in this paper.  Note also that these spaces were also used in earlier studies \cite{Kenig_1993_SmallSol,kenig_1993_contraction} to solve derivative nonlinear Schr\"odinger equations in $H^s(=\widehat{H^s_2})$.  Perhaps, in these works $L_x^r L_t^q$ spaces were introduced to handle derivatives in nonlinearities.  Thus, we believe our idea is novel in the sense that we introduce these function spaces to overcome the difficulty of the loss of derivatives in the Strichartz estimate in the standard $L^q_t L^r_x$-spaces. 
Finally, our approach relies on the smoothing property of the Strichartz estimates of $L_x^rL_t^q$-type and it cannot work for $\A<1$.  Indeed, it is clear that one has a gain of derivatives in \eqref{introsmoothing} only if $\A>1$.

\subsection{Statement of the  Well-posedness Results}
We need some notations and numbers in order to state the well-posedness results.  The  scaling critical exponent $s_c$ is given by
\begin{equation*}
s_c=s_c(\A,p,\rho):=\frac{1}{p}-\frac{\A}{\rho-1}.
\end{equation*}
$s_c$ is determined in the usual way.  Indeed, if we let $u(t,x)$ be a solution of \eqref{FNLS}, then $u_{\lambda}(t,x):=\lambda^{\A/(\rho-1)} u(\lambda^{\A}t, \lambda x),\,\lambda>0$  also solves the same equation with the datum $u_{\lambda}(x):=\lambda^{\A/(\rho-1)} u(0,\lambda x)$.  The exponent $s_c$ is defined by
$\| u_{\lambda}(0)\|_{\dot{H}^{s_c}_p} =\|u(0)\|_{\dot{H}^{s_c}_p}$.  We note that $\HSP$ scales like $H^s_p$.
We introduce two more exponents:
\begin{equation*}
    s_l:=\frac{\A}{\rho-1}-\frac{\A-1}{p}
\end{equation*}
and
\begin{equation*}
    s_u:=\min \left(\frac{1}{2p},\,\,\frac{2\A p-p-4\A+4}{4p},\,\, \frac{6\A p-3p-4\A+2}{2p(\rho-1)}-\frac{\A-1}{p},
    \,\, \frac{2\A(2\A-1)}{p(\rho-1)}-\frac{\A-1}{p} \right).
\end{equation*}

Note that under the assumptions of the main theorems below, we have 
\begin{equation*}
s_u = \min \left(\frac{2\A p-p-4\A+4}{4p},\,\,\frac{6\A p-3p-4\A+2}{2p(\rho-1)}-\frac{\A-1}{p} \right) \quad \text{when}  \ \ p \le 2.
\end{equation*}
and
\begin{equation*}
  s_u = \min \left(\frac{1}{2p},\,\,\frac{2\A(2\A-1)}{p(\rho-1)}-\frac{\A-1}{p} \right)  \quad \text{when}  \ \ p \ge 2
\end{equation*}
In particular, $s_u=1/4$ when $p=2$.

In this paper, we establish well-posedness results for \eqref{FNLS} in Fourier--Sobolev and similar spaces for $s\in [s_l,s_u)$ under the assumption that $1<\A\le 2$, and
  \begin{equation}
        \max \left(\frac{6\A-1}{2\A-1},\,\, \frac{2\A-1+2\A p}{2\A-1} \right) < \rho \le 2p+1,\quad 
    \frac{2\A}{2\A-1} <p <\min ( 4\A-2,\,4).
    \label{rhoprange}
\end{equation}
Note that $s_l<s_u$ if $1<\A\le 2$ and \eqref{rhoprange} are satisfied (see Remark \ref{localremark} below).

Special exponents and numbers $s_0,q_0,q_1,r_0,r_1$ appear in the statement of the theorems and their corollaries below.  \,$s_0$ is a nonnegative constant satisfying suitable conditions.  $q_0,q_1,r_0,r_1$ are exponents depending on $\A,p,\rho,s$ and $s_0$ such that the two triplets $(q_0,r_0,-s)$ and $(q_1,r_1,s_0-s)$ are $(\A,p)$-acceptable.  The precise definition of these numbers is somewhat involved and will be given in Section \ref{preliminaries}.
Now we are ready to state the well-posedness results.  We begin with
local well-posedness for large data in \textbf{homogeneous Fourier Sobolev spaces} $\HHSP$ for $s\ge s_l$ defined by
\begin{equation*}
\HHSP(\R) := \{ \phi \in \mathcal{S}'(\R)\,\,|\, |\xi|^s \hat{\phi}(\xi) \in L^{p'}(\R)\,\}.
\end{equation*}

In what follows, we write $I_T:=[0,T]$ for $T>0$.
\begin{thm} \label{LWPsubcriticalp}
    Let $1<\A\le2$.  Assume that $\rho,p,\A$ satisfy \eqref{rhoprange}.  Assume moreover that $(\rho ,s )\neq (2p+1,s_l)$ and 
\begin{equation}
    s_l \le s < s_u \label{subcriticalcondp}
\end{equation}
Then, for any $\phi \in \HHSP$ there exist $T:=T(\|\phi \|_{\HHSP})>0$ and a unique solution $u \in C([0,T] ; \HHSP )$ of \eqref{FNLS} such that
\begin{equation}
    u \in L^{r_0}_x L_t^{q_0}(I_T),\quad \text{and}\quad |D_x|^{s_0} u \in L_x^{r_1}L_t^{q_1}(I_T).
\end{equation}
Moreover, 
\begin{equation}
    |D_x|^{s+\sigma} u \in L^{r}_x L^{q}_t (I_T) \label{Strregularity1}
\end{equation}
for any $(\A,p)$-acceptable triplet $(q,r,\sigma)$. 

Furthermore, continuous dependence on the initial data holds in the following sense.  For any $M>0$ there exists $T_M>0$ with the following properties: if $u_1,u_2$ denote two solutions of \eqref{FNLS} with $u_j(0)=\phi_j,\,\|\phi_j\|_{\HHSP} \le M,\,j=1,2$, then one has
\begin{equation}
\sup_{t\in I_M} \| u_1(t)-u_2(t)\|_{\HHSP} \le C \|\phi_1 -\phi_2\|_{\HHSP}.
\end{equation}

\end{thm}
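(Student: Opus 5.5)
We run a contraction argument for the Duhamel formulation
\[
u(t)=\Phi(u)(t):=U_{\A}(t)\phi+i\int_0^t U_{\A}(t-s)\bigl(|u|^{\rho-1}u\bigr)(s)\,ds ,
\]
in the space
\[
X_T=\Bigl\{u:\ \|u\|_{X_T}:=\|u\|_{L^{r_0}_xL^{q_0}_t(I_T)}+\||D_x|^{s_0}u\|_{L^{r_1}_xL^{q_1}_t(I_T)}\le 2a\Bigr\}.
\]
This follows the Masaki--Segata scheme. The tools are the following.
\begin{itemize}
\item The homogeneous bound comes from Corollary \ref{coroffGFS}. Since $(q_0,r_0,-s)$ and $(q_1,r_1,s_0-s)$ are $(\A,p)$-acceptable, applying \eqref{OffdGFS} to $|D_x|^{s}\phi\in\Lp$ gives $\|U_\A(t)\phi\|_{X_\infty}\le C\|\phi\|_{\HHSP}$.
\item The nonlinear term is controlled by the inhomogeneous estimates in $L^r_xL^q_t$ spaces (Proposition \ref{inhomoest}). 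In the form we need, for acceptable $(q,r,\sigma)$ and $(\tilde q,\tilde r,\tilde\sigma)$,
\[
\Bigl\||D_x|^{\sigma}\int_0^tU_\A(t-s)F\,ds\Bigr\|_{L^r_xL^q_t}\le C\||D_x|^{-\tilde\sigma}F\|_{L^{\tilde r'}_xL^{\tilde q'}_t}.
\]
\item The $C([0,T];\HHSP)$ control comes from the dual estimate \eqref{GFSdual}, which requires $p<4$.
\end{itemize}

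The key step is the nonlinear estimate
\[
\bigl\||D_x|^{s-\tilde\sigma}(|u|^{\rho-1}u)\bigr\|_{L^{\tilde r'}_xL^{\tilde q'}_t(I_T)}\le CT^{\theta}\|u\|_{L^{r_0}_xL^{q_0}_t}^{\rho-1}\||D_x|^{s_0}u\|_{L^{r_1}_xL^{q_1}_t},
\]
for a suitable acceptable triplet with $s-\tilde\sigma=s_0$, and with $\theta>0$ except in the endpoint case.
\begin{itemize}
\item We choose the auxiliary triplet so that the derivative falling on the nonlinearity is exactly $s_0\in[0,1)$.
\item We then apply the fractional Leibniz/chain rule in mixed $L^r_xL^q_t$ norms (Christ--Weinstein/Kenig--Ponce--Vega type). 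It is valid for $s_0<1$ and $\rho>1$, which is ensured by $\rho>(6\A-1)/(2\A-1)$.
\item H\"older's inequality in $x$ and $t$ then distributes the exponents. The $x$-exponents match exactly by the scaling condition \eqref{scaling} and \eqref{dgain}.
\item In $t$ we keep a slack $T^\theta$, with $\theta$ proportional to $s-s_l$, or to $2p+1-\rho$ when $s=s_l$.
\end{itemize}
This is why $(\rho,s)=(2p+1,s_l)$ is excluded: there the problem is critical in both senses, and no power of $T$ is gained. Apart from the choice of $q_0,r_0,q_1,r_1,s_0$ within the trapezoid of Figure \ref{fig:acceptableintro}, this is routine. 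The upper bound $s<s_u$ and the range \eqref{rhoprange} are exactly what guarantees that all four triplets lie in the open acceptable region.

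With this estimate the argument closes as follows.
\begin{itemize}
\item We get $\|\Phi(u)\|_{X_T}\le C\|\phi\|_{\HHSP}+CT^\theta(2a)^\rho$.
\item The difference estimate uses $||u|^{\rho-1}u-|v|^{\rho-1}v|\lesssim(|u|^{\rho-1}+|v|^{\rho-1})|u-v|$. To avoid the Leibniz rule on differences, we contract in the weaker metric $\|u-v\|_{L^{r_0}_xL^{q_0}_t}$; the ball is complete in it by Fatou.
\item Taking $a=C\|\phi\|_{\HHSP}$ and $T$ small yields a fixed point.
\item Applying \eqref{GFSdual} to the same right-hand side puts $u$ in $L^\infty(I_T;\HHSP)$. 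Continuity in time follows from the dual estimate on $[t_1,t_2]$ together with dominated convergence in the $L^q_t$ norms.
\end{itemize}
The further regularity \eqref{Strregularity1} is obtained by applying the homogeneous and inhomogeneous estimates once more with an arbitrary acceptable $(q,r,\sigma)$ on the left. Continuous dependence follows from the same difference estimate in $X_T$ combined with the dual estimate, with $T_M$ depending only on $M$. For uniqueness in the stated class, we first shrink $T$ so that both solutions have small $X_T$-norm, by absolute continuity of the norms, and then iterate.

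The main obstacle I expect is the exponent bookkeeping. We need $s_0$, $q_0$, $r_0$, $q_1$, $r_1$ and the dual triplet to be simultaneously acceptable, with $s_0<1$ for the chain rule, and with a nonnegative time exponent $\theta$. The plan is to first fix $r_0$, $r_1$ from the spatial scaling identity
\[
\frac{1}{\tilde r'}=\frac{\rho-1}{r_0}+\frac{1}{r_1},
\]
and then solve for the $q$'s via \eqref{scaling}. After that we verify the open-trapezoid constraints, which should reduce to the inequalities $s_l\le s<s_u$ and \eqref{rhoprange}.
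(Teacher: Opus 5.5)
Your framework matches the paper: the Duhamel formula, the space normed by $\|u\|_{L^{r_0}_xL^{q_0}_t}+\||D_x|^{s_0}u\|_{L^{r_1}_xL^{q_1}_t}$, the estimates \eqref{Key1}--\eqref{key3}, and persistence via the dual estimate. However, your key nonlinear estimate is false for $s>s_l$.

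Requiring $(q_0,r_0,-s)$ and $(q_1,r_1,s_0-s)$ to be $(\A,p)$-acceptable, and the dual triplet $(\tilde q,\tilde r,s-s_0)$ to be $(\A,p')$-acceptable, forces $\frac{2\A-1}{r}=\frac{\A-1}{p}-\sigma$ (with $p'$ in the dual case). So $r_0,r_1,\tilde r$ are fixed by $s$ and $s_0$, and you have no freedom to "fix $r_0,r_1$ from the spatial identity". In fact
\[
\frac{\rho-1}{r_0}+\frac{1}{r_1}-\frac{1}{\tilde r'}=\frac{(\rho-1)(s-s_l)}{2\A-1}\qquad\text{for every } s_0 ,
\]
so the $x$-exponents match only at $s=s_l$. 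For $s>s_l$, H\"older puts the nonlinearity in a strictly lower Lebesgue exponent than $\tilde r'$. No time H\"older can repair this: the dilation $u(t,x)\mapsto u(t,\lambda x)$ leaves every $L^q_t(I_T)$ norm and $T^{\theta}$ unchanged. But it multiplies the ratio of the two sides of your estimate by $\lambda^{(\rho-1)(s-s_l)/(2\A-1)}\to\infty$.

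The missing idea is the paper's shift $\delta=\frac{(\rho-1)(s-s_l)}{2\A}$ $\bigl(=\frac{(\A-1+sp)(\rho-1)}{2\A p}-\frac12\bigr)$.
\begin{itemize}
\item Use the dual triplet $(q_2,r_2,s-s_0+\delta)$, so that only $s_0-\delta$ derivatives land on $|u|^{\rho-1}u$.
\item Spend those $\delta$ derivatives through the $L^r_xL^q_t$ Sobolev inequality \eqref{Sobolev}, with $\frac1{\tilde r}=\frac1{r_2'}+\delta=\frac{\rho-1}{r_0}+\frac1{r_1}$ exactly.
\item H\"older in $t$ then gives $T^{(\rho-1)(s-s_c)/\A}$, which agrees with your $\theta$ only at $s=s_l$.
\end{itemize}
This requires $\delta<1/r_2$ and the $(\A,p')$-acceptability of the shifted triplet for some admissible $s_0$ (Lemma \ref{exponentlemma}(iv)--(vi)). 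That is where the term $\frac{6\A p-3p-4\A+2}{2p(\rho-1)}-\frac{\A-1}{p}$ in $s_u$ comes from, so this step is structural, not bookkeeping inside your scheme.

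Separately, your contraction in the weaker metric $\|u-v\|_{L^{r_0}_xL^{q_0}_t}$ has its own gap. Using only the pointwise bound on $G(u)-G(v)$, Proposition \ref{inhomoest} needs an $(\A,p')$-acceptable triplet with $\sigma\ge s$, so that no positive derivative falls on the difference. Such a triplet can fail to exist. For $\A=2$, $p=7/2$, $\rho=6$, $s=s_l=4/35$ all hypotheses hold, yet every $(\A,p')$-acceptable triplet has $\sigma=\frac57-\frac3r<\frac1{14}<s$. The paper instead contracts in the full $S(T)$ norm. It writes $G(u_1)-G(u_2)=(u_1-u_2)\int_0^1G'(\theta u_1+(1-\theta)u_2)\,d\theta$ and applies Lemmas \ref{Leibniz} and \ref{Lipe}.
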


\medskip

Noting that 
$\HSP \subset \widehat{\dot{H}^{s_l}_p} \cap\HHSP  $, we can derive local well-posedness in inhomogeneous spaces $\HSP$ from Theorem \ref{LWPsubcriticalp}:

\begin{cor} \label{LWPinhomogeneous}
Let $1<\A \le 2$ and let $p,\rho,s$ be as in Theorem \ref{LWPsubcriticalp}.  Then, for any $\phi \in \HSP$ there exist $T:=T(\|\phi \|_{{\widehat{\dot{H}^{s_l}_p}}}, \|\phi \|_{\HHSP})>0$  and a unique solution $u \in C([0,T] ; \HSP )$ of \eqref{FNLS} such that
\begin{equation}
    u \in L^{r_0}_x L_t^{q_0}(I_T),\quad \text{and}\quad |D_x|^{s_0} u \in L_x^{r_1}L_t^{q_1}(I_T).
\end{equation}

Moreover, 
\begin{equation}
    |D_x|^{s+\sigma} u \in L^{r}_x L^{q}_t (I_T) \label{Strregularitya}
\end{equation}
for any $(\A,p)$-acceptable triplet $(q,r,\sigma)$.

Furthermore, continuous dependence on the initial data holds in the following sense.  For any $M>0$ there exists $T_M>0$ with the following properties: if $u_1,u_2$ denote two solutions of \eqref{FNLS} with $u_j(0)=\phi_j,\,\|\phi_j\|_{\HHSP} \le M,\,j=1,2$, then one has
\begin{equation}
\sup_{t\in I_M} \| u_1(t)-u_2(t)\|_{\HSP} \le C \|\phi_1 -\phi_2\|_{\HSP}.
\end{equation}

\end{cor}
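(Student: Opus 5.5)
The plan is to obtain Corollary \ref{LWPinhomogeneous} from Theorem \ref{LWPsubcriticalp} applied at two regularity levels, $s_l$ and $s$, plus persistence of regularity. First I record the embedding $\HSP \subset \widehat{\dot{H}^{s_l}_p}\cap \HHSP$ for $s\ge s_l\ge 0$. It holds because $|\xi|^{\tau}\le (1+\xi^2)^{s/2}$ for $0\le \tau\le s$, and $s_l\ge 0$ follows from \eqref{rhoprange}. Conversely, the inhomogeneous norm is controlled by the two homogeneous ones:
\begin{equation*}
\|\phi\|_{\HSP}\le C\big(\|\hat\phi\|_{L^{p'}(|\xi|\le 1)}+\|\phi\|_{\HHSP}\big),
\end{equation*}
and the low-frequency piece is bounded by $\|\phi\|_{\widehat{\dot H^{s_l}_p}}$ only if $s_l=0$. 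So I will not use the $s_l$ space for the low frequencies. Instead I will control the $\Lp$ norm directly, using the duality estimate \eqref{GFSdual} at level $\sigma$ with $-\sigma$ shifted.

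Concretely, I fix $\phi\in\HSP$ and apply Theorem \ref{LWPsubcriticalp} with $s=s_l$; this is admissible provided $(\rho,s_l)\neq(2p+1,s_l)$, and otherwise I work at some $s'\in(s_l,s)$. This gives a time $T_1=T(\|\phi\|_{\widehat{\dot H^{s_l}_p}})$ and a solution $u$ with the auxiliary norms $L^{r_0}_xL^{q_0}_t$ and $|D_x|^{s_0}u\in L^{r_1}_xL^{q_1}_t$ finite. These auxiliary norms (with exponents computed at $s_l$) are what control the nonlinearity. Applying the theorem at level $s$ gives a solution on $[0,T_2]$ with $T_2=T(\|\phi\|_{\HHSP})$. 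By uniqueness in the intersection class, the two solutions coincide on $[0,\min(T_1,T_2)]$, and I take $T$ to be this minimum. This explains why $T$ depends on both norms. The regularity statements \eqref{Strregularitya} and the auxiliary memberships are then inherited directly from the theorem.

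The remaining and main step is to show $u\in C([0,T];\HSP)$, that is, to control the low frequencies in $\Lp$. I write the Duhamel formula $u(t)=U_\A(t)\phi+i\int_0^tU_\A(t-s)(|u|^{\rho-1}u)(s)\,ds$. Since $U_\A$ is unitary on $\Lp$, the linear part is fine. For the nonlinear part I use \eqref{GFSdual} with an acceptable triplet having $\sigma=0$ (Fefferman--Stein itself, $q=r=3p$) or a nearby triplet. This requires bounding $\||u|^{\rho-1}u\|_{L^{r'}_xL^{q'}_t}$ by Hölder in mixed norms, using the $L^{r}_xL^q_t$ norms of $u$ available from \eqref{Strregularitya} with $\sigma=-s$ or the $s_l$-level bounds. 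The hardest part is the exponent bookkeeping: I need the Hölder exponents to match acceptable triplets at the appropriate $\sigma$. If the direct $\sigma=0$ choice fails, I would split the nonlinearity into low and high frequencies, or interpolate between the $s_l$ and $s$ levels.

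Continuity in time follows from dominated convergence in the Duhamel integral. Continuous dependence in $\HSP$ follows by the same difference estimates: the $\HHSP$ part comes from the theorem, and the $\Lp$ part comes from the dual estimate applied to $|u_1|^{\rho-1}u_1-|u_2|^{\rho-1}u_2$, after shrinking $T_M$ if needed.
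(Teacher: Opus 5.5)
Your architecture is the paper's. You use $\|\cdot\|_{\HSP}\sim\|\cdot\|_{\Lp}+\|\cdot\|_{\HHSP}$, apply Theorem~\ref{LWPsubcriticalp} at levels $s_l$ and $s$, identify the two solutions, and bound the Duhamel term in $\Lp$ via \eqref{GFSdualprop}. However, the step you yourself call the hardest is left open, and the route you suggest cannot cover the whole range of the corollary.

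First, the Fefferman--Stein triplet does not have $\sigma=0$. In the dual estimate it is $(3p',3p',\frac{\A-2}{3p'})$, so $\sigma=0$ only when $\A=2$.

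More seriously, take any $(\A,p')$-acceptable $(q,r,\sigma)$. The scaling conditions give $\sigma=\frac{\A-1}{p'}-\frac{2\A-1}{r}$. The condition $\frac1r<\frac12-\frac1q$ forces $\frac1r>\frac1{p'}-\frac12$, hence $\sigma<\frac{\A}{p}-\frac12$. So whenever $2\A\le p<\min(4,4\A-2)$, every admissible $\sigma_4$ is negative. This range is nonempty for every $\A\in(1,2)$ and compatible with \eqref{rhoprange}. There, \eqref{GFSdualprop} necessarily puts a derivative of positive order $-\sigma_4$ on $|u|^{\rho-1}u$. No choice of H\"older exponents removes it. Frequency splitting or interpolating between the $s_l$ and $s$ levels only gives information on $u$, not on the nonlinearity.

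What is missing is a fractional chain rule in $L^r_xL^q_t$ together with a compatible set of exponents. The paper proceeds in three steps:
\begin{enumerate}
\item It trades $\tilde\delta$ derivatives via \eqref{Sobolev} to pass from $L^{r_4'}_x$ to $L^{r_6}_x$.
\item It applies H\"older in time.
\item It uses Lemma~\ref{Lipe} to place $\tilde\delta-\sigma_4=s_l+\sigma_5>0$ derivatives on one factor in $L^{r_5}_xL^{q_5}_t$, which is controlled by the $s_l$-level Strichartz regularity, and the other $\rho-1$ factors in $L^{r_0}_xL^{q_0}_t$.
\end{enumerate}
The existence of $(q_4,r_4,\sigma_4)$, $(q_5,r_5,\sigma_5)$ and $\tilde\delta=s_l/(2\A)$ satisfying \eqref{tdeltacond}--\eqref{tdeltacond3} is exactly Lemma~\ref{exponentlem}, whose proof needs a case analysis in $p$ and $\rho$.

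For $p<2\A$ your idea is actually viable. The triplet $\frac1r=\frac{\A-1}{(2\A-1)p'}$, $\frac1q=\frac{1}{(2\A-1)p'}$, $\sigma=0$ is then $(\A,p')$-acceptable. One can check that H\"older plus \eqref{Sobolev} applied to the $s_l$-level bounds closes without any chain rule, which is simpler than the paper's argument there. But this is not the full range of the corollary, and you did not carry out the check. The same gap affects your continuous-dependence claim in $\HSP$, which rests on this $\Lp$ estimate applied to differences.

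Second, ``uniqueness in the intersection class'' is not available. Theorem~\ref{LWPsubcriticalp} gives uniqueness of the $s_l$-solution only in $S(T)|_{s=s_l}$, whose exponents $q_0,r_0,q_1,r_1$ are taken at $s_l$. It gives uniqueness of the $s$-solution only in the class built at level $s$. Neither solution is known a priori to lie in the other's class. You must show, for instance, that the $\HHSP$-solution belongs to $S(T)|_{s=s_l}$.

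The paper does this in Lemma~\ref{extrasobolev}. For well-chosen $(\A,p)$-acceptable triplets, Sobolev-type embeddings in $L^r_xL^q_t$ plus H\"older in time bound $\|u\|_{L^{r_0}_xL^{q_0}_t(I_T)}$ and $\||D_x|^{s_0}u\|_{L^{r_1}_xL^{q_1}_t(I_T)}$ by $C_T\||D_x|^{s+\sigma}u\|_{L^r_xL^q_t(I_T)}$. This uses $s\ge s_l$ and requires choosing $s_0$ slightly above $\tilde s_0$.

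Your device of working at some $s'\in(s_l,s)$ when $\rho=2p+1$ is sensible; the paper's own proof invokes the theorem at $s_l$ without addressing that excluded case. But then both the $\Lp$ bound and this embedding into the uniqueness class would have to be redone at level $s'$.
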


\begin{rem} \label{localremark}
\begin{enumerate}
\item 
The existence of the upper bound $s_u$ for the regularity comes from the availability of exponents that allow both the linear and nonlinear estimates for the corresponding integral equation, which will be needed to construction a solution of \eqref{FNLS}.  See Subsection \ref{Spexponent}.  This condition seems to be of a technical nature and could be removed or relaxed(we do not discuss this here, though). 
    \item
    We have $s_l <s_u$ under the assumption $1<\A\le 2$ and \eqref{rhoprange}.  Thus the range \eqref{subcriticalcondp} in Theorem \ref{LWPsubcriticalp} is not empty.  Indeed, it is easy to check that
\begin{align*}
&s_l< \frac{1}{2p} \quad \text{if and only if} \quad \rho >\frac{2\A-1+2\A p}{2\A-1}, \\
&s_l < \frac{2\A p-p-4\A+4}{4p} \quad \text{if and only if} \quad \rho>\frac{6\A-1}{2\A-1},  \\
&s_l <\frac{6\A p-3p-4\A+2}{2p(\rho-1)}-\frac{\A-1}{p} \quad \text{if} \quad p>\frac{2\A}{2\A-1}, \\
&s_l <\frac{2\A(2\A-1)}{p(\rho-1)}-\frac{\A-1}{p} \quad  \text{if and only if} \quad p<4\A-2.
\end{align*}

\item Our results state that \eqref{FNLS} is locally well posed in $\HSP,\HHSCP$ for $s \ge s_l$.  The condition may seem unusual.  This also comes from technical reasons; it is necessary to estimate some derivative of the nonlinear term in $L_x^rL_t^q$ spaces. 
\item The exponent $s_c$ is well known.  Scaling considerations suggest well-posedness of \eqref{FNLS} in $\HSP$ could be pushed down to $s\ge s_c$.  On the other hand, in \cite{HongSire_2015_frNLS}, Hong and Sire proposed another threshold $s_g:=\frac{2-\A}{4}$ for the well-posedness of \eqref{FNLS} in the case of $p=2$. They conjectured that \eqref{FNLS} is locally well posed in $H^s$ for $s\ge \max (s_c,s_g)$ and it is ill-posed for $s<\max (s_c,s_g)$.  Indeed, Cho et al.\cite{Cho_2015_well_ill_NLS} showed ill-posedness of the 1D cubic fractional NLS in the sense that the map $\phi \mapsto u$ is not uniformly continuous from $H^s$ to $C([0,T]\, ;H^s)$.  For $p=2$, it is easy to see that $s_l \ge \max (s_c, s_g)$, with the equality if and only if $\rho =5$.  In this sense, our results leave room for improvement.  This will be studied in our future work (it seems functional framework other than $L_x^rL^q$ type spaces should be employed in order to remove the condition $s\ge s_l$). For the case of general $p$, it is also easy to see that $s_l \ge s_c$ and equality holds if and only if $\rho =2p+1$.  Thus one can reach a critical $\HHSP$-space when $\rho =2p+1$.  This leads to the small data global well-posedness results at the critical Sobolev regularity below.

\item 
Putting $\A=p=2$ in Theorem \ref{LWPsubcriticalp} and Corollary \ref{LWPinhomogeneous} yields well-posedness of the standard 1D NLS in $H^s$.  However, the results do not  cover all the case of the known well-posedness result (see e.g. \cite{Cazenave_2003_SemilinearSchrodinger, Cazenave_1990_criticalsobolev}).  For example, it is clear the our result does not cover the cubic nonlinearity.  While we can take advantage of the gain of derivatives, the condition on available exponents in the framework of $L_x^r L_t^q$ appears to be more restrictive than that in the standard Strichartz space $L_t^qL^r_x$.

\end{enumerate}
\end{rem}

As mentioned above, $s_l$ reaches the scaling critical exponent $s_c$ when $\rho =2p+1$, for which we can establish small data global well-posenenss results in $\HSCP,\HHSCP$:

\begin{thm}\label{SmalldataGWP}
    Let $1<\A\le2,\quad \rho=2p+1$, and 
    \begin{equation*}
    s_c=s_c(\A,p,2p+1)=\frac{2-\A}{2p}.
    \end{equation*}
    Assume that 
    \begin{equation*}
        \frac{2\A}{2\A-1} <p < \min (4, 4\A-2).
    \end{equation*}
There is an $\varepsilon>0$ such that for any $\phi \in \HHSCP$ with $\|\phi \|_{\HHSCP}< \varepsilon$ there exists a unique global solution $u \in C(\R ; \HHSCP) \cap L^{\infty}( \R ; \HHSCP) $ of (\ref{FNLS}) satisfying
\begin{equation*}
    u \in L^{r_0}_x L_t^{q_0}(\R),\quad \text{and}\quad |D_x|^{s_0} u \in L_x^{r_1}L_t^{q_1}(\R).
\end{equation*}
Moreover, 
\begin{equation}
    |D_x|^{s+\sigma} u \in L^{r}_x L^{q}_t (\R) \label{Strregularityb}
\end{equation}
for any $(\A,p)$-acceptable triplet $(q,r,\sigma)$.   
\end{thm}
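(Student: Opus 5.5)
The proof is a contraction argument on the Duhamel formulation
\[
u(t)=U_{\A}(t)\phi + i\int_0^t U_{\A}(t-s)\bigl(|u|^{\rho-1}u\bigr)(s)\,ds,
\]
carried out globally in time ($I=\R$) in the Banach space
\[
X:=\bigl\{u:\ \|u\|_X:=\|u\|_{L^{\infty}(\R;\HHSCP)}+\|u\|_{L_x^{r_0}L_t^{q_0}(\R)}+\||D_x|^{s_0}u\|_{L_x^{r_1}L_t^{q_1}(\R)}<\infty\bigr\},
\]
with $s=s_c=s_l$. The equality $s_c=s_l$ holds because $\rho=2p+1$.

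\textbf{Linear part.} Write $|D_x|^{s_c}\phi=:\psi\in\Lp$. Apply Corollary \ref{coroffGFS} to $\psi$ with the $(\A,p)$-acceptable triplets $(q_0,r_0,-s_c)$ and $(q_1,r_1,s_0-s_c)$. This gives
\[
\|U_{\A}\phi\|_{L_x^{r_0}L_t^{q_0}}+\||D_x|^{s_0}U_{\A}\phi\|_{L_x^{r_1}L_t^{q_1}}\le C\|\phi\|_{\HHSCP}.
\]
The $L^\infty\HHSCP$ bound is the trivial isometry. The same step with an arbitrary acceptable $(q,r,\sigma)$ produces \eqref{Strregularityb} once $u$ is constructed.

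\textbf{Nonlinear part.} For the $L^\infty_t\HHSCP$ norm I use the dual estimate \eqref{GFSdual} (Proposition \ref{inhomoest}) applied to $|D_x|^{s_c}N(u)$ with a suitable acceptable triplet $(\tilde q,\tilde r,\tilde\sigma)$. This reduces the bound to
\[
\||D_x|^{s_c-\tilde\sigma}(|u|^{2p}u)\|_{L_x^{\tilde r'}L_t^{\tilde q'}}.
\]
For the $L_x^{r}L_t^{q}$ norms I use the inhomogeneous Strichartz estimates of $L_x^rL_t^q$ type from Proposition \ref{inhomoest}. This reduces those bounds to the same kind of quantity, namely $\||D_x|^{\gamma}(|u|^{2p}u)\|_{L_x^{a'}L_t^{b'}}$ with $\gamma\in[0,1)$.

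To estimate these terms I use a fractional Leibniz/chain rule in mixed $L_x^rL_t^q$ norms (Kenig--Ponce--Vega type, valid for $0<\gamma<1$ and power nonlinearities with $\rho>1+\gamma$). It puts the derivative on one factor and gives
\[
\||D_x|^{\gamma}(|u|^{2p}u)\|_{L_x^{a'}L_t^{b'}}\lesssim \||D_x|^{s_0}u\|_{L_x^{r_1}L_t^{q_1}}\,\|u\|_{L_x^{r_0}L_t^{q_0}}^{2p}.
\]
Hölder's inequality requires $1/a'=1/r_1+2p/r_0$ and $1/b'=1/q_1+2p/q_0$.

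The main obstacle is verifying that the exponents $(q_0,r_0,q_1,r_1,s_0)$ fixed in Section \ref{preliminaries} make all of these Hölder relations and scaling conditions consistent, with $\gamma=s_0$ matching the derivative required. Scaling guarantees the numerology closes exactly at $s=s_c$ with no power of $T$ left over, which is why the argument is global. The hypotheses $2\A/(2\A-1)<p<\min(4,4\A-2)$ should be exactly what keeps every triplet inside the open trapezoid of Figure \ref{fig:acceptableintro}. I would check this first, relying on Subsection \ref{Spexponent}.

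\textbf{Conclusion.} Combining the two steps gives
\[
\|\Phi(u)\|_X\le C\|\phi\|_{\HHSCP}+C\|u\|_X^{2p+1}.
\]
The difference estimate is
\[
\|\Phi(u)-\Phi(v)\|_X\le C\bigl(\|u\|_X^{2p}+\|v\|_X^{2p}\bigr)\|u-v\|_X.
\]
If I cannot put derivatives on differences, I would prove the difference estimate in the weaker norm without $s_0$ derivatives and then use that the ball of $X$ is closed under that metric. For $\varepsilon$ small, $\Phi$ is a contraction on the ball of radius $2C\varepsilon$, which gives a unique global solution. Continuity $u\in C(\R;\HHSCP)$ follows from the dual estimate applied on shrinking intervals together with dominated convergence.
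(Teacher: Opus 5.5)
Your proposal is correct and follows the paper's proof essentially step for step. At $s=s_c=s_l$ one has $\delta=0$, so \eqref{Key1}, \eqref{Key2}, \eqref{key3} with the $(\A,p')$-acceptable triplet $(q_2,r_2,s_c-s_0)$, together with Lemmas \ref{Lipe} and \ref{Leibniz}, close the contraction on all of $\R$ with no power of $T$, under exactly your H\"older relations $1/r_2'=1/r_1+2p/r_0$ and $1/q_2'=1/q_1+2p/q_0$. The only cosmetic differences are that the paper contracts in $S$ alone and recovers the $L^{\infty}(\R;\HHSCP)$ bound afterwards, and that it proves the difference estimate with $s_0$ derivatives directly via $G(u_1)-G(u_2)=(u_1-u_2)\int_0^1 G'(\theta u_1+(1-\theta)u_2)\,d\theta$, so your weaker-metric fallback is not needed.
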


\begin{cor}\label{SmalldataGWPinhomo}
    Let $1<\A\le 2,\quad \rho=2p+1$, and 
    \begin{equation}
    s_c=s_c(\A,p,2p+1)=\frac{2-\A}{2p}.
    \end{equation}
    Assume that 
    \begin{equation}
        \frac{2\A}{2\A-1} <p < \min (4, 4\A-2).
    \end{equation}

There is an $\varepsilon>0$ such that for any $\phi \in \HSCP$ with $\|\phi \|_{\HHSCP}< \varepsilon$ there exists a unique global solution $u \in C(\R ; \HSCP) \cap L_{loc}^{\infty}( \R ; \HSCP) $ of (\ref{FNLS}) satisfying
\begin{equation}
    u \in L^{r_0}_x L_t^{q_0}(\R),\quad \text{and}\quad |D_x|^{s_0} u \in L_x^{r_1}L_t^{q_1}(\R).
\end{equation}

Moreover, 
\begin{equation}
    |D_x|^{s+\sigma} u \in L^{r}_x L^{q}_t (I_T) \label{Strregularityc}
\end{equation}
for any $(\A,p)$-acceptable triplet and $T>0$.    
\end{cor}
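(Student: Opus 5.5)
Corollary \ref{SmalldataGWPinhomo} will be derived from Theorem \ref{SmalldataGWP}, in the same way that Corollary \ref{LWPinhomogeneous} follows from Theorem \ref{LWPsubcriticalp}. We take $\phi\in\HSCP$ with $\|\phi\|_{\HHSCP}<\varepsilon$. Since $\HSCP\subset\HHSCP$ (because $s_c=(2-\A)/(2p)\ge0$ and $|\xi|^{s_c}\le(1+\xi^2)^{s_c/2}$), Theorem \ref{SmalldataGWP} produces a unique global solution
\[
u\in C(\R;\HHSCP)\cap L^\infty(\R;\HHSCP)
\]
with $u\in L^{r_0}_xL^{q_0}_t(\R)$, $|D_x|^{s_0}u\in L^{r_1}_xL^{q_1}_t(\R)$, and \eqref{Strregularityb}. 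Uniqueness is then automatic, and the global space-time bounds transfer directly. What remains is persistence of the inhomogeneous regularity: we must show $u\in C(\R;\HSCP)\cap L^\infty_{loc}(\R;\HSCP)$ together with \eqref{Strregularityc} on each $I_T$.

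The key observation is that $\|f\|_{\HSCP}\sim\|f\|_{\Lp}+\|f\|_{\HHSCP}$, with $\Lp=\widehat{\dot H^0_p}$. It therefore suffices to control the $\Lp$ norm of $u$ on $I_T$. For this we use the Duhamel formula
\[
u(t)=U_\A(t)\phi+i\int_0^tU_\A(t-s)\,|u|^{2p}u(s)\,ds.
\]
The $\Lp$ norm of the linear part is conserved. For the nonlinear part we apply the dual estimate \eqref{GFSdual} (Proposition \ref{inhomoest}) at regularity level $0$, taking an $(\A,p)$-acceptable triplet $(q,r,\sigma)$. We then bound $\||D_x|^{-\sigma}(|u|^{2p}u)\|_{L^{r'}_xL^{q'}_t(I_T)}$ by H\"older's inequality, by the fractional Leibniz rule or Sobolev-type estimates in $L_x^rL_t^q$ spaces, or, when $\sigma\le 0$, by fractional integration in $x$ (this is the step that needs care), so as to reach a bound of the form
\[
C\,\|u\|_{X^0(I_T)}\,\|u\|_{Y(I_T)}^{2p}.
\]
Here $X^0$ denotes the same $L_x^rL_t^q$ norms used in the proof of Theorem \ref{SmalldataGWP}, but at regularity shifted by $-s_c$, that is, at derivative level $s_c\mapsto 0$. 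The factor $Y$ collects the scale-invariant norms, which are already finite and small globally.

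The same estimate applied to the $L_x^rL_t^q$ component gives the bound
\[
\|u\|_{\Lp\text{-level}}\le C\|\phi\|_{\Lp}+C\varepsilon^{2p}\|u\|_{\Lp\text{-level}}.
\]
Because the nonlinearity is linear in the highest-regularity factor and the other factors are small, this closes, provided the level-$0$ norms are first known to be finite. To justify that, we run the contraction argument of Theorem \ref{SmalldataGWP} on a short interval with the extra level-$0$ norm included in the resolution space, and then iterate on consecutive intervals. The bound so obtained is uniform on every $I_T$; in fact it is even global, although the statement only asks for $L^\infty_{loc}$. Continuity in time in $\Lp$ follows from the integral equation and dominated convergence, exactly as for $\HHSCP$. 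The regularity \eqref{Strregularityc} then follows by combining the level-$0$ and level-$s_c$ bounds through Corollary \ref{coroffGFS} and interpolation of the two derivative levels.

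The main obstacle is checking that the exponent choices $(q_0,r_0,q_1,r_1,s_0)$ of Section \ref{preliminaries}, which were designed for level $s_c$, still admit acceptable triplets when every derivative count is lowered by $s_c$. In particular, the lowered triplet $(q_0,r_0,-s_c)$ becomes a triplet with nonnegative $\sigma$ at level $0$, and we must check it stays inside the trapezoid of Figure \ref{fig:acceptableintro}. If this check fails, the fallback is to avoid level $0$ altogether: interpolate the persistence estimate between level $s_c$ and the level $s_l$ of Theorem \ref{LWPsubcriticalp} (which coincides here, as $\rho=2p+1$). Alternatively, we can obtain the $\Lp$ part by applying Corollary \ref{LWPinhomogeneous} locally, with its time of existence depending only on the globally bounded $\HHSCP$ norm, and glue the resulting local solutions by uniqueness.
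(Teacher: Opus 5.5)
Your route is different from the paper's. The paper never uses a norm at derivative level $0$. It bounds $\|\int_0^tU_\A(t-\tau)|u|^{2p}u\,d\tau\|_{\Lp}$ directly by level-$s_c$ quantities that Theorem \ref{SmalldataGWP} already provides. It does this via \eqref{GFSdualprop} with the triplets $(q_4,r_4,\sigma_4)$, $(q_5,r_5,\sigma_5)$ and the gain $\tilde\delta$ of Lemma \ref{exponentlem}, a Sobolev step \eqref{Sobolev} in $x$, and H\"older in $t$. The result is $CT^{1/q_7}\|u\|^{2p}_{L^{r_0}_xL^{q_0}_t}\||D_x|^{s_c+\sigma_5}u\|_{L^{r_5}_xL^{q_5}_t}$ with $1/q_7=2\tilde\delta=s_c/\A$. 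This is finite without any a priori level-$0$ information, but it grows with $T$, which is why the paper only gets $L^\infty_{loc}(\R;\HSCP)$.

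You instead run a persistence bootstrap that is linear in a level-$0$ norm with small coefficient $\|u\|^{2p}_{L^{r_0}_xL^{q_0}_t(\R)}$. This can be made to work, and it then gives more than the paper: $\sup_{t\in\R}\|u(t)\|_{\Lp}\le C\|\phi\|_{\Lp}$. As written, however, the decisive nonlinear estimate is not carried out, and the thing you say must be checked is not the real obstacle.

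Acceptability is a property of the triplet, not of the level. The triplets $(q_0,r_0,-s_c)$, $(q_1,r_1,s_0-s_c)$, $(q_2,r_2,s_c-s_0)$ are exactly those of Lemma \ref{exponentlemma} at $s=s_c$, $\delta=0$, and \eqref{OffdGFS}, \eqref{GFSdualprop}, \eqref{inhomoestprop} apply to them verbatim with $\Lp$ data. What changes is the number of derivatives on the nonlinearity: you must bound $\||D_x|^{s_0-s_c}(|u|^{2p}u)\|_{L^{r_2'}_xL^{q_2'}_t}$.

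At $s=s_c$ one has $1/r_2'=1/r_1+2p/r_0$ and $1/q_2'=1/q_1+2p/q_0$. So Lemma \ref{Lipe} (plain H\"older if $s_0=s_c$) gives the bound $C\|u\|^{2p}_{L^{r_0}_xL^{q_0}_t}\||D_x|^{s_0-s_c}u\|_{L^{r_1}_xL^{q_1}_t}$, but only when $s_0\ge s_c$. For $s_0<s_c$ a negative-order operator falls on the nonlinearity. Trading it through \eqref{Sobolev} leaves a factor of $u$ with no derivative in some $L^r_x$ with $r<r_1$, which the level-$0$ norm does not control. Your ``fractional integration when $\sigma\le 0$'' has the sign backwards.

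The real check is therefore that some admissible $s_0\ge s_c$ exists, and it does.
\begin{itemize}
\item At $s=s_c$, $\delta=0$, every upper bound for $s_0$ in Lemma \ref{exponentlemma}(ii),(iii) exceeds $s_c$.
\item The lower bounds permit $s_0=s_c$, except when $p\ge 2\A$.
\item When $p\ge 2\A$, the condition $1/q_2+1/r_2<1/2$ forces $s_0>s_c+\frac{p-2\A}{2p}$. This is still below $s_c+\frac{\A-1}{p}$ and $s_c+\frac{3p-2\A p+4\A-4}{4p}$, precisely because $p<4\A-2$ and $p<4$.
\end{itemize}
The small factor involves only $\|u\|_{L^{r_0}_xL^{q_0}_t}$, which does not depend on $s_0$. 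So this $s_0$ need not be the one used at level $s_c$, and you can put $\||D_x|^{s_0-s_c}u\|_{L^{r_1}_xL^{q_1}_t(\R)}$ directly into the global contraction. No short intervals or iteration are needed.

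Both of your fallbacks fail.
\begin{itemize}
\item With $\rho=2p+1$ one has $s_l=s_c$, so there is nothing to interpolate between. In any case, interpolating between levels $\ge s_c$ never reaches level $0$.
\item Corollary \ref{LWPinhomogeneous} is unavailable, because Theorem \ref{LWPsubcriticalp} excludes $(\rho,s)=(2p+1,s_l)$.
\end{itemize}

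Two small corrections:
\begin{itemize}
\item The triplet in the dual estimate must be $(\A,p')$-acceptable, not $(\A,p)$-acceptable.
\item \eqref{Strregularityc} (with $s=s_c$) is just \eqref{Strregularityb} restricted to $I_T$; no interpolation between derivative levels is needed.
\end{itemize}
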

\begin{rem}
 The persistence property of the small global solutions in the inhomogeneous spaces in the above corollary is weaker than those in homogeneous spaces in the sense that we can only show that $u\in L_{loc}^{\infty}(\R ; \HSCP)$, not $u\in L^{\infty}(\R ; \HSCP)$.  This is because the $\Lp$-regularity of solution that leads $\HSCP$-regularity is shown only for finite time intervals.  However, we have $u\in L^{\infty} (\R ; \HSP)$ for $\rho=5,\,p=2$ which corresponds to the following quintic fractional NLS in the critical space
 \begin{equation}
     iu_t+(-\pa_{xx})^{\A/2}u+|u|^4u=0,\quad u|_{t=0}=\phi \in H^{\frac{2-\A}{4}}. \label{quintic}
 \end{equation}
 For details, see the last of the proof of the corollary in Section \ref{proofinhomo}.  Note also that, it seems small data global well-posedness of \eqref{quintic} in the critical space $H^{\frac{2-\A}{4}}$ is not covered by the standard well-posedness results in $H^s$ in \cite{HongSire_2015_frNLS}.  So we believe that our results is new even with respect to the most typical case of $p=2$.
\end{rem}

\par
This paper is organized as follows.  In Section \ref{FSproof}, we prove the Fefferman--Stein-type estimate (Theorem \ref{FS}) and its off-diagonal generalization.  Section \ref{proofLWP} is devoted to the proof of the well-posedness results in homogeneous Fourier--Sobolev spaces.  Finally, in Section \ref{proofinhomo}, we prove the well-posedness results in the inhomogeneous spaces. The results are deduced as corollaries of that in homogeneous spaces; it can be proved by showing the $\Lp$-regularity of the $\widehat{\dot{H}^{s_l}_p}$-solution and some uniqueness results.  The uniqueness assertion is established by Sobolev type embedding $L^r_xL_t^q$-spaces and regularity \eqref{Strregularity1}, \eqref{Strregularityc} of the solution and its derivatives in for arbitrary $(\A,p)$-acceptable  $L_x^rL_t^q$-spaces.

\section{Proof of Theorem \ref{FS} and Corollary \ref{coroffGFS}} \label{FSproof}
In this section, we mainly prove the Fefferman--Stein-type estimate.  Corollary \ref{coroffGFS} is an immediate consequence of the smoothing estimate and the maximal function estimate via interpolation.  Although these estimates are known, we give their proof in Subsection \ref{FScorproof} for completeness.  
\subsection{Proof of the Fefferman--Stein-type estimate } \label{subsectionFS}
We first present two key inequalities.  One is elementary estimates for the difference of power functions:

\begin{lem} \label{JestL}
Let $\beta \neq 0$.  Then
\begin{equation}
\left|\xi_1^{\beta}-\xi_2^{\beta}\right|^{-1} \le C^1_{\beta}\, \frac{\,\xi_1^{1-\beta} +\xi_2^{1-\beta}}{|\xi_1-\xi_2|} \label{Jest1}
\end{equation}
and
\begin{equation}
\left|\xi_1^{\beta}-\xi_2^{\beta}\right|^{-1} \le C^2_{\beta}\, \frac{\,\xi_1 \xi_2^{-\beta} +\xi_2 \xi_1^{-\beta}}{|\xi_1-\xi_2|} \label{Jest2}
\end{equation}
for any $\xi_1,\xi_2>0$ with $\xi_1\neq \xi_2$.

\end{lem}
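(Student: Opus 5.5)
By symmetry of both right-hand sides in $(\xi_1,\xi_2)$ we may assume $\xi_1>\xi_2>0$. Both inequalities are invariant under the dilation $(\xi_1,\xi_2)\mapsto(\lambda\xi_1,\lambda\xi_2)$: the left side scales like $\lambda^{-\beta}$, and so do $\xi_j^{1-\beta}/|\xi_1-\xi_2|$ and $\xi_1\xi_2^{-\beta}/|\xi_1-\xi_2|$. We therefore normalize $\xi_1=1$ and write $\xi_2=t\in(0,1)$. Multiplying through by $|1-t^\beta|$, the two claims become
\begin{equation*}
1-t \le C^1_\beta\,(1+t^{1-\beta})\,|1-t^\beta|, \qquad 1-t \le C^2_\beta\,(t^{-\beta}+t)\,|1-t^\beta| \qquad (0<t<1).
\end{equation*}
Equivalently, we must show that $g_1(t):=\frac{(1-t)}{(1+t^{1-\beta})|1-t^\beta|}$ and $g_2(t):=\frac{(1-t)}{(t^{-\beta}+t)|1-t^\beta|}$ are bounded on $(0,1)$.

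Both functions are continuous on $(0,1)$ because $t^\beta\neq1$ there, so it suffices to check their behaviour at the two endpoints. As $t\to1^-$, the mean value theorem gives $|1-t^\beta|\sim|\beta|(1-t)$, and the prefactors tend to $2$. Hence $g_1,g_2\to 1/(2|\beta|)$, which is finite since $\beta\neq0$.

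For $t\to0^+$ we split according to the sign of $\beta$.

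\emph{Case $\beta>0$.} Here $|1-t^\beta|\to1$. The prefactor $1+t^{1-\beta}$ is at least $1$, and $t^{-\beta}+t\ge t^{-\beta}\to\infty$, so both $g_1$ and $g_2$ stay bounded.

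\emph{Case $\beta<0$.} Write $\gamma=-\beta>0$, so that $|1-t^\beta|=t^{-\gamma}-1\sim t^{-\gamma}$ blows up. For \eqref{Jest1} we have $1+t^{1-\beta}=1+t^{1+\gamma}\ge1$, so $g_1\lesssim t^{\gamma}\to0$. For \eqref{Jest2} we have $t^{-\beta}+t=t^{\gamma}+t$, so $g_2\sim \frac{1}{(t^{\gamma}+t)\,t^{-\gamma}}=\frac{1}{1+t^{1-\gamma}}$. If $\gamma\le1$ this tends to $\frac12$ or to $1$; if $\gamma>1$ the term $t^{1-\gamma}\to\infty$ and $g_2\to0$. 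In every case $g_2$ is bounded.

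The only delicate point is this last computation for \eqref{Jest2} with $\beta<0$, where one has to track which term dominates. Once it is done, both suprema are finite, and they define $C^1_\beta$ and $C^2_\beta$. Undoing the normalization gives the stated inequalities for all $\xi_1\neq\xi_2$.
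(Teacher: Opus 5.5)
Your proof is correct, but it takes a different route from the paper. You use symmetry and dilation invariance to reduce to $\xi_1=1$, $\xi_2=t\in(0,1)$. You then get boundedness of $g_1,g_2$ from continuity plus an endpoint analysis, with case splits on the sign of $\beta$ and, for \eqref{Jest2}, on $\gamma=-\beta$ versus $1$.

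The paper argues directly instead. The bound
\[
|\xi_1^\beta-\xi_2^\beta| = \Bigl|\beta\int_{\xi_2}^{\xi_1}\eta^{\beta-1}\,d\eta\Bigr| \ge |\beta|\min(\xi_1^{\beta-1},\xi_2^{\beta-1})\,|\xi_1-\xi_2|
\]
gives \eqref{Jest1} at once with $C^1_\beta=|\beta|^{-1}$, and even with $\max$ in place of the sum. The paper then deduces \eqref{Jest2} from \eqref{Jest1} applied with $-\beta$, via the identity $\xi_1^\beta-\xi_2^\beta=(\xi_2^{-\beta}-\xi_1^{-\beta})/(\xi_1^{-\beta}\xi_2^{-\beta})$. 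Multiplying $\xi_1^{1+\beta}+\xi_2^{1+\beta}$ by $\xi_1^{-\beta}\xi_2^{-\beta}$ gives exactly $\xi_1\xi_2^{-\beta}+\xi_2\xi_1^{-\beta}$.

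What the paper's route buys:
\begin{itemize}
\item explicit constants ($C^2_\beta=C^1_{-\beta}=|\beta|^{-1}$);
\item no case distinctions;
\item the ``delicate'' $\beta<0$ computation you flagged for \eqref{Jest2} disappears, since that case is just \eqref{Jest1} for $-\beta>0$.
\end{itemize}

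What your route buys: only qualitative constants, but it is a robust template for any scale-invariant two-variable inequality, and your endpoint limits carry extra information. Near the diagonal both bounds are sharp up to constants, since the ratios tend to $1/(2|\beta|)$. The limits as $t\to0^+$ show where each bound loses. For example, for $\beta>1$, $g_2$ decays faster than $g_1$, while for $\beta<-1$ the reverse holds. This matches the comparison of the two estimates made in Remark~\ref{FSremark2}.
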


\begin{proof}
(\ref{Jest1}) is straightforward.  Indeed, noting that
\begin{equation*}
|\xi_1^{\beta}-\xi_2^{\beta}| =\left|\beta \int^{\xi_1}_{\xi_2} \eta^{\beta-1} d\eta \right| \ge |\beta| \min (\xi_1^{\beta-1}, \xi_2^{\beta-1} ) \,|\xi_1 -\xi_2|,
\end{equation*}
we have
\begin{equation*}
\left|\xi_1^{\beta}-\xi_2^{\beta}\right|^{-1} \le |\beta|^{-1} \, \max ( \xi_1^{1-\beta},\, \xi_2^{1-\beta} ) \, |\xi_1-\xi_2|^{-1}
\le |\beta|^{-1} \, \frac{ \xi_1^{1-\beta} +\xi_2^{1-\beta}} {|\xi_1-\xi_2|}.
\end{equation*}
(\ref{Jest2}) follows from (\ref{Jest1}).  Indeed, we have
\begin{eqnarray*}
\left|\xi_1^{\beta}-\xi_2^{\beta}\right|^{-1} &=& \left| \frac{\xi_1^{-\beta} -\xi_2^{-\beta}}{ \xi_1^{-\beta}\xi_2^{-\beta}} \right|^{-1} \\
&\le &\xi_1^{-\beta} \xi_2^{-\beta} \times C_{-\beta}^1 \, \frac{ \xi_1^{1+\beta} +\xi_2^{1+\beta}} {|\xi_1-\xi_2|} \\
&=& C_{-\beta}^1 \frac{\,\xi_1 \xi_2^{-\beta} +\xi_2 \xi_1^{-\beta}}{|\xi_1-\xi_2|}.
\end{eqnarray*}

\end{proof}

 The other key estimate is the weighted Hardy--Littlewood--Sobolev inequality due to
Stein and Weiss \cite{SteinWeiss_1958_FrcInt}:

\begin{prop}( \cite[Theorem $B_2^{\ast}$]{SteinWeiss_1958_FrcInt})\label{WHLS} \quad Let $0<\lambda<n,\,1<q_1\le q_2<\infty,\,k_1<n/q_1',\,k_2<n/q_2,\,k_1+k_2\ge 0$ and
\begin{equation*}
\frac{1}{q_2}=\frac{1}{q_1}+\frac{\lambda+k_1+k_2}{n}-1.
\end{equation*}
Then
\begin{equation*}
\left| \int_{\R^n}\int_{\R^n} \frac{f_1(y)f_2(x)}{|x|^{k_2} |x-y|^{\lambda} |y|^{k_1}} dy dx\right|
\le C\|f_1\|_{L^{q_1}(\R^n)} \|f_2\|_{L^{q_2'}(\R^n)}.
\end{equation*}
\end{prop}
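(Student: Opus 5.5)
Since Proposition~\ref{WHLS} is quoted from \cite{SteinWeiss_1958_FrcInt}, the paper itself uses it as a black box. If I were to prove it, I would pass to the operator form. By duality with $f_2\in L^{q_2'}$, it suffices to show that
\begin{equation*}
Tf(x):=|x|^{-k_2}\int_{\R^n}\frac{f(y)}{|x-y|^{\lambda}|y|^{k_1}}\,dy
\end{equation*}
is bounded from $L^{q_1}$ to $L^{q_2}$. We may take $f\ge 0$. The relation $\frac{1}{q_2}=\frac{1}{q_1}+\frac{\lambda+k_1+k_2}{n}-1$ is exactly the scaling condition, and I will use it repeatedly to match exponents. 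I split $\R^n_y$, for fixed $x$, into three regions: $|y|<|x|/2$, $|y|>2|x|$, and $|x|/2\le |y|\le 2|x|$. This gives $T=T_1+T_2+T_3$.

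\textbf{Region 1.} Here $|x-y|\sim|x|$, so
\begin{equation*}
T_1f(x)\lesssim |x|^{-k_2-\lambda}\int_{|y|<|x|}f(y)|y|^{-k_1}\,dy.
\end{equation*}
This is a Hardy-type operator with power weights. Averaging over spheres reduces it to the one-dimensional Hardy inequality for radial functions in $r=|x|$. The local integrability of $|y|^{-k_1}$ against $L^{q_1}$, i.e.\ $k_1<n/q_1'$, is precisely what makes the weighted Hardy inequality hold from $L^{q_1}$ to $L^{q_2}$ with $q_1\le q_2$. I would invoke the Bradley/Muckenhoupt characterization of power-weight Hardy inequalities, checking its single balance condition via the scaling relation.

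\textbf{Region 2.} Here $|x-y|\sim|y|$, so
\begin{equation*}
T_2f(x)\lesssim |x|^{-k_2}\int_{|y|>|x|}f(y)|y|^{-k_1-\lambda}\,dy.
\end{equation*}
This is the dual Hardy operator. Its boundedness needs $|x|^{-k_2}\in L^{q_2}_{loc}$, i.e.\ $k_2<n/q_2$, and again the scaling relation supplies the balance condition.

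\textbf{Region 3.} Here $|x|\sim|y|$ and $|x-y|\le 3|x|$, so the weights combine to $|x|^{-(k_1+k_2)}$. Since $k_1+k_2\ge0$, we get $|x|^{-(k_1+k_2)}\lesssim |x-y|^{-(k_1+k_2)}$. Setting $\gamma=\lambda+k_1+k_2=n-n(1/q_1-1/q_2)$, the kernel is dominated by $|x-y|^{-\gamma}$.
\begin{itemize}
\item If $q_1<q_2$, then $0<\gamma<n$, and the classical Hardy--Littlewood--Sobolev inequality finishes this piece.
\item If $q_1=q_2$, then $\gamma=n$ and the bound above is useless. Instead I localize to dyadic annuli $A_j=\{|x|\sim 2^j\}$. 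On each annulus, $\int_{|x-y|\le 3\cdot 2^{j}}|x-y|^{-\lambda}\,dy\sim 2^{j(n-\lambda)}$, and $2^{-j(k_1+k_2)}2^{j(n-\lambda)}=O(1)$. So Schur's test (or Young's inequality) gives uniform $L^{q_1}\to L^{q_1}$ bounds for the pieces $\mathbf 1_{A_j}T_3(\mathbf 1_{\tilde A_j}f)$, where $\tilde A_j$ is the slightly enlarged annulus. The finite overlap of the $\tilde A_j$ then lets us sum in $\ell^{q_1}$.
\end{itemize}

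\textbf{Main obstacle.} I expect the hardest step to be obtaining strong-type (not merely weak-type) bounds for $T_1$ and $T_2$ at the sharp exponents $k_1<n/q_1'$ and $k_2<n/q_2$. The naive pointwise H\"older estimate only yields weak $L^{q_2}$. I would handle this with the weighted one-dimensional Hardy inequality after spherical averaging. As a fallback, I would use Marcinkiewicz interpolation along the scaling line, perturbing $k_1,k_2$, which is possible because the conditions are open.
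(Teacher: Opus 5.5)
The paper does not prove this proposition. It quotes it from Stein--Weiss and uses it as a black box, only in the proof of Theorem~\ref{FS}. Your three-region argument is the standard proof of that theorem, and it is correct. What it gives you over the bare citation is a clear view of where each hypothesis enters. The condition $k_1<n/q_1'$ drives the Hardy bound for $T_1$, and $k_2<n/q_2$ drives the dual Hardy bound for $T_2$. The condition $q_1\le q_2$ is the range in which Bradley's characterization holds, and when the inequality is strict it is also the range of HLS. Finally, $k_1+k_2\ge 0$ is exactly what lets you replace $|x|^{-(k_1+k_2)}$ by $|x-y|^{-(k_1+k_2)}$ near the diagonal in region~3.

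There are two details you should make explicit.
\begin{enumerate}
\item Bradley's condition also needs integrability at infinity: $(k_2+\lambda)q_2>n$ for $T_1$ and $(k_1+\lambda)q_1'>n$ for $T_2$. These are not extra hypotheses. By the scaling relation they are equivalent to $k_1<n/q_1'$ and $k_2<n/q_2$ respectively, and the product of the two power integrals is then $R^{0}$.
\item In your Marcinkiewicz fallback you should fix the operator, i.e.\ fix $k_1,k_2,\lambda$, and perturb $q_1$, letting scaling determine $q_2$. Perturbing $k_1,k_2$ instead would change the operator. The correct version works because the hypotheses are open in $q_1$, and H\"older's inequality gives the weak-type bound $|x|^{-n/q_2}\|f\|_{L^{q_1}}$ for both dominating Hardy operators.
\end{enumerate}

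In the paper's only application ($n=1$, $k_1=-k_2$, $q_1=\frac{r-2}{r-3}<q_2=r-2$) the weights cancel in region~3. So only your unweighted HLS branch is ever needed, and the $q_1=q_2$ dyadic Schur branch never arises there.
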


\begin{proof}[\textbf{Proof of Theorem \ref{FS}}]
 Let $r>4$.  We write
\begin{eqnarray*}
\| \U \phi \|_{L_{xt}^r(\R^2)}^2 &=& \left\| |\U \phi|^2 \right\|_{L^{\frac{r}{2}}_{xt} (\R^2)}.
\end{eqnarray*}
We have
\begin{equation*}
|\U f(x)|^2=\iint_{\R\times \R} e^{ix(\xi_1-\xi_2)+it(|\xi_2|^{\A}-|\xi_1|^{\A})} 
\hat{\phi}(\xi_1) \overline{\hat{\phi}(\xi_2)} d\xi_1 d\xi_2:=K_1+K_2+K_3+K_4,
\end{equation*}
where
\begin{eqnarray*}
K_1&:= &\iint_{\xi_1,\xi_2>0} e^{ix(\xi_1-\xi_2)+it(|\xi_2|^{\A}-|\xi_1|^{\A})} 
\hat{\phi}(\xi_1) \overline{\hat{\phi}(\xi_2) }d\xi_1 d\xi_2 \\
K_2&:= &\iint_{\xi_1>0,\xi_2<0} e^{ix(\xi_1-\xi_2)+it(|\xi_2|^{\A}-|\xi_1|^{\A})} 
\hat{\phi}(\xi_1) \overline{\hat{\phi}(\xi_2) }d\xi_1 d\xi_2 \\
K_3&:= &\iint_{\xi_1<0,\xi_2>0} e^{ix(\xi_1-\xi_2)+it(|\xi_2|^{\A}-|\xi_1|^{\A})} 
\hat{\phi}(\xi_1) \overline{ \hat{\phi}(\xi_2)} d\xi_1 d\xi_2 \\
K_4&:= &\iint_{\xi_1,\xi_2<0} e^{ix(\xi_1-\xi_2)+it(|\xi_2|^{\A}-|\xi_1|^{\A})} 
\hat{\phi}(\xi_1) \overline{\hat{\phi}(\xi_2) }d\xi_1 d\xi_2. \\
\end{eqnarray*}
We only estimate $K_1$.  The other contributions can be treated in a similar manner.  We introduce a change of variables $\Psi:(\xi_1,\xi_2) \mapsto (\eta_1,\eta_2)$ by
\begin{equation*}
\eta_1=\xi_1-\xi_2,\quad \eta_2=\xi_2^{\A}-\xi_1^{\A}.
\end{equation*}
Then its Jacobian determinant is given by:
\begin{equation*}
\frac{\pa(\eta_1,\eta_2)}{\pa(\xi_1,\xi_2)}
=\begin{vmatrix}
1 &-1 \\
-\A\xi_1^{\A-1} & \A\xi_2^{\A-1} 
\end{vmatrix}
=\A(\xi_2^{\A-1}-\xi_1^{\A-1}).
\end{equation*}
We put
\begin{equation*}
J(\xi_1,\xi_2):=\left| \frac{\pa(\eta_1,\eta_2)}{\pa(\xi_1,\xi_2)}\right|= \A\left|\xi_1^{\A-1}-\xi_2^{\A-1} \right|,
\end{equation*}
and 
\begin{equation*}
G(\eta_1,\eta_2) := J(\xi_1(\eta_1,\eta_2), \xi_2(\eta_1,\eta_2))^{-1} \hat{\phi}(\xi_1(\eta_1,\eta_2)) \overline{\hat{\phi}(\xi_2(\eta_1,\eta_2))}\mathbf{1}_R(\eta_1,\eta_2),
\end{equation*}
where $R:=\Psi(\{ \xi_1,\xi_2>0 \} )$ and $\mathbf{1}_R$ denotes indicator function.

Then 
\begin{equation*}
K_1=\iint_{\R\times \R} e^{ix\eta_1+it\eta_2} G(\eta_1,\eta_2) d\eta_1 d\eta_2
=\mathcal{F}^{-1} G(x,t).
\end{equation*}

Now, by the Hausdorff--Young inequality we have
\begin{equation*}
\|K_1\|_{L^{\frac{r}{2}}_{xt}} \le C\|G\|_{L^{\frac{r}{r-2}}_{\eta_1\eta_2}}.
\end{equation*}

We estimate the norm on the right hand side of the inequality.  By the change of variables $(\eta_1,\eta_2) \mapsto (\xi_1,\xi_2)$ with the Jacobian determinant $J(\xi_1,\xi_2)$ we have 
\begin{eqnarray*}
\|G\|_{L^{\frac{r}{r-2}}_{\eta_1\eta_2}}^{\frac{r}{r-2}}
&=&\iint_{\R\times \R} |G(\eta_1,\eta_2)|^{\frac{r}{r-2}} d\eta_1 d\eta_2 \\
&=& \iint_{R} |J(\xi_1(\eta_1,\eta_2),\xi_2(\eta_1,\eta_2))|^{-\frac{r}{r-2}} 
|\hat{\phi}(\xi_1(\eta_1,\eta_2)) \overline{\hat{\phi}(\xi_2(\eta_1,\eta_2))}|^{\frac{r}{r-2}} 
d\eta_1 d\eta_2\\
&=& \iint_{\xi_1,\xi_2>0} |J(\xi_1,\xi_2)|^{-\frac{2}{r-2}} 
|\hat{\phi}(\xi_1) \overline{\hat{\phi}(\xi_2)}|^{\frac{r}{r-2}}
d\xi_1 d\xi_2\\
&=& C \iint_{\xi_1,\xi_2>0} \left| \xi_1^{\A-1}-\xi_2^{\A-1} \right|^{-\frac{2}{r-2}} 
| \hat{\phi}(\xi_1) \overline{\hat{\phi}(\xi_2)} |^{\frac{r}{r-2}}
d\xi_1 d\xi_2.\\
\end{eqnarray*}

Now we use Lemma \ref{JestL} to estimate the difference derived from the Jacobian determinant.  We consider two cases.  
We first assume $1<\A\le 2$.  We apply (\ref{Jest1}) to obtain
\begin{equation*}
\left| \xi_1^{\A-1}-\xi_2^{\A-1} \right|^{-1} \le C\frac{|\xi_1|^{2-\A} +|\xi_2|^{2-\A}}{|\xi_1-\xi_2|}.
\end{equation*}
Then, noting the elementary inequailty $(a+b)^{\gamma} \le 2^{\gamma}(a^{\gamma}+b^{\gamma}),\,a,b>0,\gamma>0$, we have
\begin{equation*}
\left| \xi_1^{\A-1}-\xi_2^{\A-1} \right|^{-\frac{2}{r-2}}  
\le C\frac{|\xi_1|^{\frac{2(2-\A)}{r-2}}+|\xi_2|^{\frac{2(2-\A)}{r-2}} }{|\xi_1-\xi_2|^{\frac{2}{r-2}}}.
\end{equation*}
Therefore $\|G\|_{L^{\frac{r}{r-2}}_{\eta_1\eta_2}}^{\frac{r}{r-2}}$ is smaller than
\begin{align*}
&
\iint_{\R^2} \frac{|\xi_1|^{\frac{2(2-\A)}{r-2}} }{|\xi_1 -\xi_2 |^{\frac{2}{r-2}}}
\left| \hat{\phi}(\xi_1)
\overline{\hat{\phi}(\xi_2)} \right|^{\frac{r}{r-2}}d\xi_1d\xi_2+\iint_{\R^2} \frac{|\xi_2|^{\frac{2(2-\A)}{r-2}} }{|\xi_1 -\xi_2 |^{\frac{2}{r-2}}}
\left| \hat{\phi}(\xi_1)
\overline{\hat{\phi}(\xi_2)} \right|^{\frac{r}{r-2}} d\xi_1d\xi_2 \\
&:=I_1+I_2.
\end{align*}
We only consider $I_1$.  $I_2$ can be treated in a similar manner.  We write
\begin{eqnarray*}
I_1&=&\iint_{\R^2} \frac{ |\xi_1|^{\frac{2-\A}{r-2}} |\xi_2|^{-\frac{2-\A}{r-2}    }  } {|\xi_1-\xi_2|^{\frac{2}{r-2}}   }  \left(|\xi_1|^{\frac{2-\A}{r}} |\hat{\phi}(\xi_1)| \right)^{\frac{r}{r-2}} \left(|\xi_2|^{\frac{2-\A}{r}} |\hat{\phi}(\xi_2)| \right)^{\frac{r}{r-2}}  d\xi_1 d\xi_2 \\
&=&
\iint_{\R^2} \frac{  \left(|\xi_1|^{\frac{2-\A}{r}} |\hat{\phi}(\xi_1)| \right)^{\frac{r}{r-2}} \left(|\xi_2|^{\frac{2-\A}{r}} |\hat{\phi}(\xi_2)| \right)^{\frac{r}{r-2}}   } {   |\xi_1|^{-\frac{2-\A}{r-2}}  |\xi_1-\xi_2|^{\frac{2}{r-2}} |\xi_2|^{\frac{2-\A}{r-2}    }  }  d\xi_1 d\xi_2 .
\end{eqnarray*}

Then we apply Proposition \ref{WHLS} with
\begin{eqnarray*}
f_1&=&\left(|\xi_1|^{\frac{2-\A}{r}} |\hat{\phi}(\xi_1)| \right)^{\frac{r}{r-2}} ,\quad f_2=\left(|\xi_2|^{\frac{2-\A}{r}} |\hat{\phi}(\xi_2)| \right)^{\frac{r}{r-2}},\\
k_1&=&-\frac{2-\A}{r-2},\quad k_2=\frac{2-\A}{r-2},\quad \lambda=\frac{2}{r-2},\\
q_1&=&\frac{r-2}{r-3},\quad q_2=r-2,
\end{eqnarray*}
where it can easily be checked that these satisfy the assumption of Proposition if $r>4$. Especially, 
note that the condition for the power of the weights are satisfied since $1<\A$.

Consequently, we get
\begin{eqnarray*}
I_1 &\le &C \left\|  \left(|\xi_1|^{\frac{2-\A}{r}} |\hat{\phi}(\xi_1)| \right)^{\frac{r}{r-2}} \right\|_{L^{ \frac{r-2}{r-3}} } \left\| \left(|\xi_2|^{\frac{2-\A}{r}} |\hat{\phi}(\xi_2)| \right)^{\frac{r}{r-2}}  \right\|_{L^{(r-2)'  } } \\
&=&C\left\|  \left(|\cdot|^{\frac{2-\A}{r}} |\hat{\phi}(\cdot)| \right)^{\frac{r}{r-2}} \right\|_{L^{ \frac{r-2}{r-3}} }^2 \quad \text{note that}\,(r-2)'=\frac{r-2}{r-3} \\
&=& C\left\| |\cdot|^{ \frac{2-\A}{r}} \hat{\phi} \right\|^{\frac{2r}{r-2} }_{L^{\frac{r}{r-3}}}.
\end{eqnarray*}
Hence
\begin{equation*}
\|K_1\|_{L^{ \frac{r}{2}} }^{ \frac{r}{r-2} } =\|G\|_{L^{\frac{r}{r-2}}_{\eta_1\eta_2}}^{\frac{r}{r-2}} \le C \left\| |\cdot|^{ \frac{2-\A}{r}} \hat{\phi} \right\|^{\frac{2r}{r-2} }_{L^{\frac{r}{r-3}}}.
\end{equation*}
Since $K_2,K_3,K_4$ have the same upper bound, we have
\begin{eqnarray*}
\| \U \phi\|_{L^r_{xt}}^2 &=& \| |\U \phi|^2 \|_{L^{ \frac{r}{2}} } \le \|K_1 \|_{L^{ \frac{r}{2}} }+\|K_2 \|_{L^{ \frac{r}{2}} }+\|K_3 \|_{L^{ \frac{r}{2}} }+\|K_4 \|_{L^{ \frac{r}{2}} }\\
&\le & C\| |\cdot|^{\frac{2-\A}{r}} \hat{\phi} \|_{L^{\frac{r}{r-3}  }  }^2 \\
&=& C\||D_x|^{\frac{2-\A}{r}} \phi\|_{\hat{L}^{\frac{r}{3}}}^2.
\end{eqnarray*}
Finally, putting $r=3p$, which is possible if $p>4/3$, yields the desired estimate when $1<\A\le 2$.

For the case where $0<\A<1$, we use (\ref{Jest2}) to obtain 
\begin{equation*}
\left| \xi_1^{\A-1}-\xi_2^{\A-1} \right|^{-\frac{2}{r-2}}  
\le C\frac{\xi_1^{\frac{2}{r-2}} \xi_2^{\frac{2(1-\A)}{r-2}}+\xi_2^{\frac{2}{r-2}} \xi_1^{\frac{2(1-\A)}{r-2}} }{|\xi_1-\xi_2|^{\frac{2}{r-2}}}.
\end{equation*}

Thus we have
\begin{eqnarray*}
\|G\|_{L^{\frac{r}{r-2}}_{\eta_1\eta_2}}^{\frac{r}{r-2}}
&\le &
C\iint_{\R^2} \frac{|\xi_1|^{\frac{2}{r-2} }  |\xi_2|^{\frac{2(1-\A)}{r-2}} }{|\xi_1 -\xi_2 |^{\frac{2}{r-2}}}
\left| \hat{\phi}(\xi_1)
\overline{\hat{\phi}(\xi_2)} \right|^{\frac{r}{r-2}}d\xi_1d\xi_2\\
&& +\iint_{\R^2} \frac{|\xi_2|^{\frac{2}{r-2} }  |\xi_1|^{\frac{2(1-\A)}{r-2}} }{|\xi_1 -\xi_2 |^{\frac{2}{r-2}}}
\left| \hat{\phi}(\xi_1)
\overline{\hat{\phi}(\xi_2)} \right|^{\frac{r}{r-2}} d\xi_1d\xi_2.
\end{eqnarray*}

We rewrite the first term of the right hand side as
\begin{equation*}
\iint_{\R^2} \frac{  \left(|\xi_1|^{\frac{2-\A}{r}} |\hat{\phi}(\xi_1)| \right)^{\frac{r}{r-2}} \left(|\xi_2|^{\frac{2-\A}{r}} |\hat{\phi}(\xi_2)| \right)^{\frac{r}{r-2}}   } {   |\xi_1|^{-\frac{\A}{r-2}}  |\xi_1-\xi_2|^{\frac{2}{r-2}} |\xi_2|^{\frac{\A}{r-2}    }  }  d\xi_1 d\xi_2.
\end{equation*}
We then apply Proposition \ref{WHLS} with
\begin{equation*}
k_1=-\frac{\A}{r-2},\quad k_2=\frac{\A}{r-2}
\end{equation*}
and with $f_1,f_2,q_1,q_2,\lambda$ the same as above and arguing as in the case of $\A>1$, we see that
it is bounded by above by  $C\| |D|^{\frac{2-\A}{r}} \phi \|_{\hat{L}^{r/3}}^{\frac{2}{r-2}}$.  The estimate for the second term is similar. Consequently,
the desired estimate is proved for $0<\A<1$.    
\end{proof}


\begin{rem} \label{FSremark2}
It seems both estimates in Lemma \ref{JestL} are required to prove Theorem \ref{FS}.  In fact, if we employ (\ref{Jest1}), we then
apply Proposition \ref{WHLS} with $k_2=\frac{2-\A}{r-2}$, but this violate the condition $k_2<q_2$ on the power of the weight for $0<\A<1$.  Similarly,
we set $k_2=\frac{\A}{r-2}$ if we use (\ref{Jest2}), but condition $k_2<q_2$ dose not hold if $\A>1$.  Note also that
(\ref{Jest1}) is stronger than (\ref{Jest2}) if $\beta > 0$, while (\ref{Jest2}) is stronger than (\ref{Jest1}) if $\beta < 0$.
To see this, we just apply Young's inequality to the right hande side of (\ref{Jest1}) and (\ref{Jest2}).  Let $(i,j)=(1,2)$ or $(2,1)$.  Then we have
\begin{eqnarray*}
\xi_i^{1-\beta} &=&(\xi_i^{\frac{ 1}{ 1+\beta}} \xi_j^{-\frac{\beta }{1+\beta }} )\cdot (\xi_i^{-\frac{\beta^2}{1+\beta}} \xi_j^{\frac{\beta}{1+\beta}} )
\simleq (\xi_i^{\frac{ 1}{ 1+\beta}} \xi_j^{-\frac{\beta }{1+\beta }} )^{1+\beta} +(\xi_i^{-\frac{\beta^2}{1+\beta}} \xi_j^{\frac{\beta}{1+\beta}} )^{\frac{1+\beta}{\beta}} \\
&=&  \xi_i \xi_j^{-\beta} +\xi_j \xi_i^{-\beta},
\end{eqnarray*}
which is valid if $\beta >0$.  On the other hand, we have
\begin{equation*}
\xi_i \xi_j^{-\beta} \simleq \xi_i^{1-\beta} +(\xi_j^{-\beta})^{\frac{1-\beta}{-\beta}}=\xi_i^{1-\beta} +\xi_j^{1-\beta},
\end{equation*}
which is valid if $\beta<0$.

Finally, it is not difficult to see that $\A\le 2$ is not required to apply Hardy--Littlewood--Sobolev type inequality in the proof of the case $1<\A<2$.  In fact, the condition can be relaxed to $\A<3$.  However, we discarded the higher order case here since we want to focus only on the case $\A\le 2$ and the other cases shall be treated in our next paper.
\end{rem}

\subsection{Smoothing and Maximal Function Estimates and Proof of Corollary \ref{coroffGFS} }\label{FScorproof}

Corollary \ref{coroffGFS} can be deduced by interpolating \eqref{eqFS} and smoothing and maximal function estimates to be established below (see also Figure \ref{fig:acceptableintro} in the previous section).  We begin with a smoothing estimate.
\begin{lem} \label{SmoothingL2}
Let $0<\A\le 2,\,\A\neq 1$ and let $\phi \in L^2$.  Then
\begin{equation}
    \left\||D_x|^{\frac{\A-1}{2}} U_{\A}(t) \phi  \right\|_{L^{\infty}_x L_t^2 (\R)} \le  C\|\phi\|_{L^2}. \label{smoothing}
\end{equation}

\end{lem}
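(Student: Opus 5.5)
The estimate is a pointwise-in-$x$ statement, so we fix $x\in\R$ and compute the $L^2_t$ norm directly via Plancherel in time. We split $\hat\phi=\hat\phi\mathbf{1}_{\xi>0}+\hat\phi\mathbf{1}_{\xi<0}$; by the triangle inequality it suffices to treat the positive-frequency part, the negative one being identical after $\xi\mapsto-\xi$. For the positive part,
\begin{equation*}
|D_x|^{\frac{\A-1}{2}}U_{\A}(t)\phi_+(x)=c\int_0^\infty e^{ix\xi-it\xi^{\A}}\xi^{\frac{\A-1}{2}}\hat\phi(\xi)\,d\xi .
\end{equation*}

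The key step is the change of variables $\tau=\xi^{\A}$. Since $\A>0$, this is a diffeomorphism of $(0,\infty)$ with $d\tau=\A\xi^{\A-1}d\xi$. The integral becomes
\begin{equation*}
c\A^{-1}\int_0^\infty e^{-it\tau}\,e^{ix\tau^{1/\A}}\,\xi(\tau)^{\frac{\A-1}{2}-(\A-1)}\hat\phi(\xi(\tau))\,d\tau,
\end{equation*}
that is, the inverse Fourier transform in $t$ of $g_x(\tau):=e^{ix\tau^{1/\A}}\xi^{-\frac{\A-1}{2}}\hat\phi(\xi)\mathbf 1_{\tau>0}$.

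Plancherel in $t$ then gives
\begin{equation*}
\|\cdot\|_{L^2_t}^2=C\int_0^\infty \xi^{-(\A-1)}|\hat\phi(\xi)|^2\,d\tau .
\end{equation*}
Changing back with $d\tau=\A\xi^{\A-1}d\xi$, the weight cancels exactly, and the norm equals $C\int_0^\infty|\hat\phi(\xi)|^2d\xi\le C\|\phi\|_{L^2}^2$. The bound is uniform in $x$, so taking the supremum over $x$ yields the estimate.

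The only point needing care is justifying the computation. We would first take $\phi$ Schwartz with $\hat\phi$ supported away from $0$, so that everything is absolutely convergent, and then pass to general $\phi\in L^2$ by density. In the limit the left side is interpreted as an $L^\infty_xL^2_t$ limit, which is legitimate because the estimate is uniform. Note that the exponent $(\A-1)/2$ is dictated precisely by requiring the Jacobian to cancel, and no restriction $\A>1$ is needed: the computation holds for all $\A\in(0,2]\setminus\{1\}$, indeed for all $\A>0$. For $\A<1$ it simply expresses a loss rather than a gain of derivatives. I expect no real obstacle beyond this density and justification step.
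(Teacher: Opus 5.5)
Your proposal is correct and follows essentially the same route as the paper: you split $\hat\phi$ into positive and negative frequencies, fix $x$, substitute $\tau=\xi^{\alpha}$ to turn the $\xi$-integral into a Fourier transform in $t$, apply Plancherel, and undo the substitution so that the weight cancels exactly for the exponent $(\alpha-1)/2$ (the paper runs the computation for a general exponent $\sigma$ and only then chooses $\sigma=(\alpha-1)/2$). Your density argument justifies the computation, a step the paper leaves implicit.
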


\begin{proof}  The estimate is contained in \cite[Theorem 4.1]{Kenig_1991_OsciIntDisp}, where the authors prove a similar estimate for more general dispersive equations. Here we give a direct proof, following \cite[Theorem 4.3]{LinaresPonce_2015_DisperBook}, where \eqref{smoothing} for $\A=2$ is proved.  We write $\phi=\phi_{+}+\phi_{-}$, where $\hat{\phi_{\pm}}:=
\mathbf{1}_{\R_{\pm}} \hat{\phi}$.  Let $\sigma \in \R$.  We have for a fixed $x\in \R$,
\begin{align*}
    \left\| |D_x|^{\sigma} U_{\A}(t) \phi_{+} \right\|_{L_t^2(\R)}^2 &= \int^{\infty}_{-\infty}
    \left| \int^{\infty}_{-\infty}
    e^{ix\xi} |\xi|^{\sigma} e^{-it|\xi|^{\A}} 
    \hat{\phi}_{+} (\xi) d \xi\right|^2 dt\\
    &= \int^{\infty}_{-\infty}
    \left| \int^{\infty}_0
    e^{ix\xi} \xi^{\sigma} e^{-it\xi^{\A}} 
    \hat{\phi}_{+} (\xi) d \xi\right|^2 dt.
\end{align*}
By the change of variables $r=\xi^{\A}$, the right hand side can be written as
\begin{align*}
  \int^{\infty}_{-\infty}
    \left| \int^{\infty}_0
    e^{ixr^{\frac{1}{\A}}} r^{\frac{\sigma}{\A}}
    e^{-itr} 
    \hat{\phi}_{+} (r^{\frac{1}{\A}}) r^{\frac{1-\A}{\A}}d r\right|^2 dt
    &= \int^{\infty}_{-\infty} 
    \left| \mathcal{F}_{r\to t} 
    \left[ e^{ix|r|^{\frac{1}{\A}}} |r|^{\frac{\sigma+1-\A }{ \A}   } \hat{\phi}_{+} (|r|^{\frac{1}{\A}}) \mathbf{1}_{r\ge 0}(r) \right] \right|^2 dt \\
    &=
     \int^{\infty}_{-\infty} 
     \left|  e^{ix|r|^{\frac{1}{\A}}} |r|^{\frac{\sigma+1-\A }{ \A}   } \hat{\phi}_{+} (r^{\frac{1}{\A}})\right|^2 \mathbf{1}_{r\ge 0}(r)dr\\
     &=\int^{\infty}_0 
     \left|  e^{ixr^{\frac{1}{\A}}} r^{\frac{\sigma+1-\A }{ \A}   } \hat{\phi}_{+} (r^{\frac{1}{\A}})\right|^2 dr,
\end{align*}
where $\mathcal{F}_{r\to t}$ denotes the Fourier transform from functions of $r$ to functions of $t$ and the Plancherel identity was used in the last
equality.  Coming back to the original variable
the right hand side is equal to
\begin{equation*}
    c\int^{\infty}_0 |\xi^{\sigma+1-\A} 
    \hat{\phi}_{+} (\xi) |^2 \xi^{\A-1} d\xi=
    c \int^{\infty}_{-\infty} |\xi|^{2\sigma+1-\A} 
    |\hat{\phi}(\xi) |^2 d\xi.
\end{equation*}
If we take $\sigma=\frac{\A-1}{2}$, this equals to
$c\|\hat{\phi}\|_{L^2}^2=c\|\phi\|_{L^2}^2$.
We get a similar identity for $\phi_{-}$.  Consequently, we obtain (\ref{smoothing}).
\end{proof}

\begin{prop}  \label{maximalfunction}
Let $\A \in (0,\infty) \setminus \{ 1\}$. Then, the estimate
\begin{equation}
    \left\||D_x|^{-\frac{1}{4}} U_{\A}(t) \phi \right\|_{L_x^4 L_t^{\infty} (\R)} \le C\|\phi\|_{L^2}. \label{maximal}
\end{equation}
holds true.

\end{prop}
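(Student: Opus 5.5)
We prove \eqref{maximal} by a $TT^*$ argument that reduces everything to a pointwise decay bound for an oscillatory kernel. First we split $\phi=\phi_++\phi_-$ as in the proof of Lemma \ref{SmoothingL2}; the two pieces are handled identically. Since the multiplier $|\xi|^{-1/4}$ is singular at the origin, we also decompose dyadically in frequency with a Littlewood--Paley partition $\hat\phi=\sum_k \psi(2^{-k}\xi)\hat\phi$. By the scaling $x\mapsto \lambda x$, $t\mapsto \lambda^{\A}t$, the operator $|D_x|^{-1/4}U_{\A}(t)$ maps $L^2$ to $L^4_xL^\infty_t$ with a constant independent of the scale. It is therefore enough to prove the estimate for a single annulus $|\xi|\sim 1$, and afterwards to sum the dyadic pieces.

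\textbf{Single annulus.} Let $T\phi=|D_x|^{-1/4}U_{\A}(t)P\phi$, where $\hat{P\phi}=\chi(\xi)\hat\phi$ and $\chi$ is a smooth bump supported in $\xi\sim1$ (or, at the end, a smooth cutoff of the form $|\xi|^{-1/2}\times$smooth). Then $TT^*$ is a space-time convolution with the kernel
\begin{equation*}
K(x,t)=\int e^{ix\xi-it|\xi|^{\A}}|\xi|^{-1/2}\chi(\xi)^2\,d\xi .
\end{equation*}
The key estimate is $|K(x,t)|\le C|x|^{-1/2}$, uniformly in $t$. To prove it we apply van der Corput's lemma to the phase $x\xi-t\xi^{\A}$. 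Its second derivative equals $-t\A(\A-1)\xi^{\A-2}$. If $|t|\gtrsim|x|$, this gives the bound $|t|^{-1/2}\lesssim|x|^{-1/2}$; the restriction $\A\neq1$ is used here. If $|t|\ll|x|$, the first derivative satisfies $|x-t\A\xi^{\A-1}|\gtrsim|x|$, and integration by parts gives $|x|^{-1}\lesssim |x|^{-1/2}$ when $|x|\ge1$. For $|x|\le 1$ the trivial bound $|K|\le C$ suffices.

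\textbf{From the kernel bound to \eqref{maximal}.} Given the bound on $K$, we have
\begin{equation*}
\Big|\int\!\!\int K(x-y,t-s)F(y,s)\,dy\,ds\Big|\le \int |x-y|^{-1/2}\|F(y,\cdot)\|_{L^1_s}\,dy .
\end{equation*}
By the one-dimensional Hardy--Littlewood--Sobolev inequality (fractional integration of order $1/2$), the right-hand side maps $L^{4/3}_x$ into $L^4_x$. Hence $TT^*:L^{4/3}_xL^1_t\to L^4_xL^\infty_t$, and consequently $T:L^2\to L^4_xL^\infty_t$.

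\textbf{Main obstacle: summing the dyadic pieces.} An $L^\infty_t$ norm does not square-sum over frequency pieces, so the dyadic decomposition cannot simply be reassembled. The cleaner route, which we would try first, is to avoid the decomposition altogether. We prove the kernel bound $|K(x,t)|\le C|x|^{-1/2}$ directly for the full multiplier $|\xi|^{-1/2}$, with smooth truncations removed by a limiting argument. For this we apply van der Corput on dyadic pieces of the $\xi$-integral, using the amplitude $|\xi|^{-1/2}$ and its derivative bounds. Each dyadic piece $|\xi|\sim 2^k$ contributes at most
\begin{equation*}
\min\big(2^{k/2},\ 2^{-k/2}|x|^{-1},\ (|t|2^{k(\A-2)})^{-1/2}2^{-k/2}\big).
\end{equation*}
Summing geometrically, with the split at $2^k\sim|x|^{-1}$ and the stationary-point region handled by the van der Corput term, gives the bound $C|x|^{-1/2}$. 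This is exactly the argument of Kenig--Ponce--Vega \cite{Kenig_1991_OsciIntDisp}, and the $TT^*$ and Hardy--Littlewood--Sobolev step then goes through unchanged.
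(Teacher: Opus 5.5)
Your final route is correct and is essentially the paper's proof. The paper linearizes the supremum with a measurable $t(x)$ and dualizes against $f\in L^{4/3}$, which is your $TT^*$ argument; it then bounds the kernel $\int e^{i(x-y)\xi-i(t(x)-t(y))|\xi|^{\alpha}}|\xi|^{-1/2}\,d\xi$ by $C|x-y|^{-1/2}$ uniformly in time and concludes with the one-dimensional Hardy--Littlewood--Sobolev inequality. The only difference is that the paper imports this kernel bound from Zhang's oscillatory-integral lemma with $\beta=1/2$, whereas you prove it by dyadic van der Corput summation (where $\alpha\neq 1$ leaves only boundedly many stationary dyadic pieces), so your initial Littlewood--Paley reduction is unnecessary.
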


\begin{proof}
 The estimate is also obtained as a special case of \cite[Theorem 2.5]{Kenig_1991_OsciIntDisp}, where the authors proved a similar maximal function estimate for the solutions to more general dispersive equations.  As well as the previous lemma, we give a direct proof for convenience.  The estimate follows immediately from an estimate for oscillatory integrals established by Zhang \cite{Zhang_2014_PointCgt}.  We have (\cite[Lemma 2.2]{Zhang_2014_PointCgt}): For $0<\beta<1$ and $\A>0,\,\A\neq 1$, 
\begin{equation}
\left|\int_{\R} e^{ix\xi +it|\xi|^{\A}}  d\xi\right| \le C (|t|^{\frac{2\beta -1}{2(\A-1)}} |x|^{-\frac{1}{2}-\frac{2\beta -1}{2(\A-1)}}+|x|^{\beta-1}) \quad \forall x\in \R. \label{oscillatoryint}
\end{equation}

We want to show that
\begin{equation}
    \left\|\int_{\R}
    e^{ix\xi-it(x)|\xi|^{\A}} \hat{\phi} (\xi) d\xi \right\|_{L^4} \le C \left\| 
    |D_x|^{\frac{1}{4} }\phi \right\|_{L^2},
    \label{maximalproof}
\end{equation}
for all measurable functions $t :\R\to \R$.  By duality, the left-hand side can be rewritten as
\begin{equation}
\sup_{f\in L^{4/3},\,\|f \|_{L^{4/3}}=1} \left| 
\int_{\R} \overline{f(x)} \int_{\R} e^{ix\xi -it(x)|\xi|^{\A}}\hat{\phi} (\xi) d\xi dx \right|.
\end{equation}
By the Cauchy--Schwartz inequality, we have  
\begin{align*}
\left| 
\int_{\R} \overline{f(x)} \int_{\R} e^{ix\xi -it(x)|\xi|^{\A}}\hat{\phi} (\xi) d\xi dx \right| =&\left| 
\int_{\R} \hat{\phi} (\xi)  \int_{\R} e^{ix\xi -it(x)|\xi|^{\A}}\overline{f(x)} dx d\xi  \right| \\
\le \biggl( \int &|\xi|^{\frac{1}{2}} |\hat{\phi}(\xi)|^2 d\xi  \biggr)^{\frac{1}{2}}
\left(\int_{\R} \frac{1}{|\xi|^{\frac{1}{2}}} \left|  
\int_{\R} e^{ix\xi -it(x)|\xi|^{\A}}\overline{f(x)} dx\right|^2 d\xi \right)^{\frac{1}{2}}\\
\le \left\| |D_x|^{\frac{1}{4}} \phi \right\|_{L^2} &\left( \int_{\R} \frac{1}{|\xi|^{\frac{1}{2}}} 
\int_{\R} \int_{\R} e^{i(x-y)\xi -i(t(x)-t(y))|\xi|^{\A}} f(x) \overline{f(y)} dxdy d\xi \right)^{\frac{1}{2}}.
\end{align*}
Now applying \eqref{oscillatoryint} with $\beta=1/2$ gives us
\begin{align*}
 \int_{\R} \frac{1}{|\xi|^{\frac{1}{2}}} 
\int_{\R} \int_{\R} e^{i(x-y)\xi -i(t(x)-t(y))|\xi|^{\A}} f(x) \overline{f(y)} dxdy d\xi  \\
=
\int_{\R} \int_{\R} \biggl(\int_{\R}
& e^{i(x-y)\xi -i(t(x)-t(y))|\xi|^{\A}} \frac{d\xi}{|\xi|^{\frac{1}{2}} } \biggr) f(x) \overline{f(y)} dxdy \\
&\le \int_{\R} \int_{\R} \frac{f(x)\overline{f(y)} }{|x-y|^{\frac{1}{2}}} dxdy.
\end{align*}
By the Hardy--Littlewood--Sobolev inequality, the right-hand side is bounded by above by
$\|f\|_{L^{4/3}}^2=1 $.  This conludes the proof of \eqref{maximalproof}. 
\end{proof}

\section{Key Ingredients}\label{preliminaries}

\subsection{Linear and Nonlinear Estimates}
Estimates of the following type for solutions to the inhomogeneous equation are established by Masaki and Segata \cite{Masaki_2016_FLpKdV} in the case of the Airy equation.
\begin{prop} \label{inhomoest}
Let $4/3<p<4$.  Assume that $(q_1,r_1,\sigma_1)$ is $(\A,p)$-acceptable and $(q_2,r_2,\sigma_2)$ is $(\A,p')$-acceptable.  
Then
\begin{equation}
    \left\| \int^t_0 U_{\A}(t-\tau) F(\tau) d\tau    \right\|_{L_t^{\infty} (I ; \Lp)}
    \le C \left\|  |D_x|^{-\sigma_2} F\right\|_{L_x^{r_2'} L^{q_2'}(I) }
    \label{GFSdualprop}
\end{equation}
and
\begin{equation}
    \left\||D_x|^{\sigma_1} \int^t_0  U_{\A}(t-\tau) F(\tau) d\tau    \right\|_{L_x^{r_1}L^{q_1}_t(I) } 
    \le C \left\|  |D_x|^{-\sigma_2} F\right\|_{L_x^{r_2'} L^{q_2'}(I) } . \label{inhomoestprop}
\end{equation}
    \end{prop}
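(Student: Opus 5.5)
The plan is the usual duality (\(TT^*\)) argument, modelled on Masaki--Segata. Write \(T\phi := U_{\A}(t)\phi\). By Corollary \ref{coroffGFS} with the \((\A,p')\)-acceptable triplet \((q_2,r_2,\sigma_2)\) we have \(\||D_x|^{\sigma_2}T\phi\|_{L^{r_2}_xL^{q_2}_t(\R)}\le C\|\phi\|_{\widehat{L^{p'}}}\). Note that \(4/3<p<4\) is exactly the condition for \(p'>4/3\), so the corollary applies to \(p'\).

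\emph{Estimate \eqref{GFSdualprop}.} Fix \(t\in I\) and set \(\psi:=\int_0^t U_{\A}(t-\tau)F(\tau)\,d\tau = U_{\A}(t)\int_0^t U_{\A}(-\tau)F(\tau)\,d\tau\). Since \(U_{\A}(t)\) is an isometry on \(\Lp\), it suffices to bound \(\Phi:=\int_0^t U_{\A}(-\tau)F(\tau)\,d\tau\) in \(\Lp\). By definition \(\|\Phi\|_{\Lp}=\|\hat\Phi\|_{L^{p'}}\), and we test against \(g\in L^{p}\) with \(\|g\|_{L^p}=1\). Let \(\chi\) be defined by \(\hat\chi=g\); then \(\chi\in\widehat{L^{p'}}\) with norm \(\|g\|_{L^p}\), because \((p')'=p\). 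Parseval gives
\begin{equation*}
\langle \hat\Phi, g\rangle = c\int_0^t\!\!\int_{\R} F(\tau,x)\,\overline{U_{\A}(\tau)\chi(x)}\,dx\,d\tau
= c\int_{\R}\!\int_0^t |D_x|^{-\sigma_2}F\;\overline{|D_x|^{\sigma_2}U_{\A}(\tau)\chi}\;d\tau\,dx .
\end{equation*}
Apply H\"older first in \(\tau\) over \([0,t]\subset I\) with exponents \(q_2',q_2\), and then in \(x\) with \(r_2',r_2\). Finally use the Corollary \ref{coroffGFS} estimate for \(\chi\) in \(\widehat{L^{p'}}\). Taking the supremum over \(t\in I\) then gives \eqref{GFSdualprop}.

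\emph{Estimate \eqref{inhomoestprop}.} First treat the untruncated operator \(\int_I U_{\A}(t-\tau)F(\tau)\,d\tau = U_{\A}(t)\Phi_I\), where \(\Phi_I:=\int_I U_{\A}(-\tau)F\,d\tau\). Apply Corollary \ref{coroffGFS} with \((q_1,r_1,\sigma_1)\) at exponent \(p\); this bounds \(\||D_x|^{\sigma_1}U_{\A}(t)\Phi_I\|_{L^{r_1}_xL^{q_1}_t}\) by \(C\|\Phi_I\|_{\Lp}\). The previous step then bounds \(\|\Phi_I\|_{\Lp}\) by \(C\||D_x|^{-\sigma_2}F\|_{L^{r_2'}_xL^{q_2'}_t(I)}\).

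The main obstacle is passing from this to the retarded operator with the cutoff \(\tau<t\). In mixed norms \(L^r_xL^q_t\) the Christ--Kiselev lemma does not apply directly, because the time integral sits inside the spatial norm. I would try the Christ--Kiselev argument adapted to \(L^{r}_xL^{q}_t\) spaces, as in Masaki--Segata, using a Whitney-type decomposition of \(\{\tau<t\}\) into dyadic squares in a rearranged time variable. This needs \(\min(r_1,q_1)>\max(r_2',q_2')\). Acceptability gives \(q_1,q_2>2\) and \(r_1,r_2>4\), so \(q_2',r_2'<2\), and the required strict ordering holds. The dyadic pieces are then summed using the untruncated bound above together with the gain \(2^{-j(1/\max(r_2',q_2')-1/\min(r_1,q_1))}\) from the Christ--Kiselev scheme.
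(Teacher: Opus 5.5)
Your proposal is correct and follows the same route as the paper. The first bound comes by duality, testing against $\widehat{L^{p'}}$ data and applying Corollary \ref{coroffGFS} at exponent $p'$ (allowed since $p<4$ iff $p'>4/3$); the second composes this with Corollary \ref{coroffGFS} at exponent $p$ for the untruncated operator and then uses the mixed-norm Christ--Kiselev lemma as in Masaki--Segata and Molinet--Ribaud, where your check that acceptability gives $\min(r_1,q_1)>2>\max(r_2',q_2')$ is exactly the exponent condition that lemma needs, which the paper leaves implicit.
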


    \begin{proof} \eqref{GFSdualprop} follows from Corollary \ref{coroffGFS} via the standard duality argument.  Indeed, writing
    \begin{align*}
    \left\|\int_I U_\A(-\tau) F(\tau )d \tau \right\|_{\Lp}&=\sup_{f: \|f\|_{\widehat{L^{p'}}}=1}
    \int_{\R} \int_I
    U_{\A}(-\tau) F(\tau)d\tau \overline{f(x)}dx\\
    &=\sup_{f: \|f\|_{\widehat{L^{p'}}}=1} \int_{\R} \int_I |\xi|^{-\sigma_2}e^{i\tau|\xi|^{\A}} \hat{F} (\tau,\xi) |\xi|^{\sigma_2}\overline{\hat{f}(\xi)} d\tau d\xi\\
 &=\sup_{f: \|f\|_{\widehat{L^{p'}}}=1} \int_{\R} \int_I  |D_x|^{-\sigma_2}F(\tau,x)  \overline{(|D|^{\sigma_2}U_{\A}(\tau)f)(x)} d\tau dx\\
 &\le \sup_{f: \|f\|_{\widehat{L^{p'}}}=1} \||D_x|^{-\sigma_2} F \|_{L^{r_2'}_xL^{q_2'}_t(I)}
 \||D_x|^{\sigma_2} U_{\A}(t) f \|_{L^{r_2}_x L^{q_2}_t(I)}
    \end{align*}
and applying \eqref{OffdGFS} to the right-hand side yields the desired estimate.

    \eqref{inhomoestprop} can be obtained by arguing as in the proof of \cite[Proposition 2.5]{Masaki_2016_FLpKdV}, where a similar estimate for the Airy equation is proved using the Christ--Kiselev lemma \cite{christ_2001_maximal,Molinet_2004_BeOno}.

    \end{proof}

\begin{cor}
Let $s\in \R$ and $4/3<p<4$.  Assume that $(q_1,r_1,\sigma_1)$ is $(\A,p)$-acceptable and $(q_2,r_2,\sigma_2)$ is $(\A,p')$-acceptable.  
Then
\begin{equation}
     \left\| |D_x|^{s+\sigma_1} U_{\A}(t) f \right\|_{L_x^{r_1}(L_t^{q_1})} 
     \le C\|f\|_{\HHSP},  \label{Key1}
 \end{equation}

\begin{equation}
    \left\| \int^t_0 U_{\A}(t-\tau) F(\tau) d\tau    \right\|_{L_t^{\infty} (I ; \HHSP)}
    \le C \left\|  |D_x|^{s-\sigma_2} F\right\|_{L_x^{r_2'} L^{q_2'}(I) } \label{Key2}
\end{equation}
and
\begin{equation}
    \left\||D_x|^{s+\sigma_1} \int^t_0  U_{\A}(t-\tau) F(\tau) d\tau    \right\|_{L_x^{r_1}L^{q_1}_t(I) } 
    \le C \left\|  |D_x|^{s-\sigma_2} F\right\|_{L_x^{r_2'} L^{q_2'}(I) } . \label{key3}
\end{equation}

\end{cor}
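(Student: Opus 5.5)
The corollary follows from Corollary \ref{coroffGFS} and Proposition \ref{inhomoest}, using only that $|D_x|^s$ is a Fourier multiplier commuting with everything in sight. The guiding identity is
\begin{equation*}
\|f\|_{\HHSP}=\| |\xi|^s \hat f\|_{L^{p'}}=\| |D_x|^s f\|_{\Lp}
\end{equation*}
(up to the normalising constant in the definition of $|D_x|^s$). Each of the three estimates will therefore be obtained from its $s=0$ counterpart, applied to $|D_x|^s f$ or to $|D_x|^sF$.

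For \eqref{Key1}, note that $|D_x|^{s}$ and $U_{\A}(t)$ are both Fourier multipliers in $x$, so they commute and $|D_x|^{s+\sigma_1}U_{\A}(t)f=|D_x|^{\sigma_1}U_{\A}(t)(|D_x|^s f)$. Since $(q_1,r_1,\sigma_1)$ is $(\A,p)$-acceptable, \eqref{OffdGFS} applied with datum $|D_x|^sf$ gives the bound $C\||D_x|^s f\|_{\Lp}=C\|f\|_{\HHSP}$.

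For \eqref{Key2} and \eqref{key3}, put $G:=|D_x|^sF$. Commuting $|D_x|^s$ past $U_{\A}(t-\tau)$ and the $\tau$-integral gives
\begin{equation*}
|D_x|^s\int_0^tU_{\A}(t-\tau)F(\tau)\,d\tau=\int_0^tU_{\A}(t-\tau)G(\tau)\,d\tau .
\end{equation*}
Hence the $L^\infty_t(I;\HHSP)$ norm on the left of \eqref{Key2} equals the $L^\infty_t(I;\Lp)$ norm of the Duhamel term built from $G$. By \eqref{GFSdualprop} this is at most $C\||D_x|^{-\sigma_2}G\|_{L_x^{r_2'}L_t^{q_2'}(I)}=C\||D_x|^{s-\sigma_2}F\|_{L_x^{r_2'}L_t^{q_2'}(I)}$. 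Similarly, $|D_x|^{s+\sigma_1}$ applied to the Duhamel term of $F$ is $|D_x|^{\sigma_1}$ applied to the Duhamel term of $G$, and \eqref{inhomoestprop} gives \eqref{key3} directly. The hypotheses $4/3<p<4$ and the two acceptability conditions are exactly those of Proposition \ref{inhomoest}; in particular $p<4$ ensures $p'>4/3$, so $(\A,p')$-acceptable triplets are defined.

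The only point needing care, and I expect it to be the sole (mild) obstacle, is justifying these commutations for homogeneous multipliers $|\xi|^s$, which are singular at $\xi=0$ when $s<0$. I would first prove all three estimates for $f$ and $F$ in a dense class, for instance Schwartz functions (in $x$) whose Fourier transforms vanish near the origin. On that class every manipulation is exact on the Fourier side, since all operators are multiplication by functions of $\xi$ and, for each fixed $\xi$, integration in $\tau$. The general case then follows by density, or equivalently by taking the statements as defining the meaning of $|D_x|^{s}$ applied to the Duhamel term.
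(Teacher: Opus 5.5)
Your argument is correct and is the route the paper intends. The paper states this corollary without a written proof, as an immediate consequence of Corollary~\ref{coroffGFS} and Proposition~\ref{inhomoest}: apply \eqref{OffdGFS}, \eqref{GFSdualprop} and \eqref{inhomoestprop} to $|D_x|^s f$ and $|D_x|^s F$, using $\|f\|_{\HHSP}=\||D_x|^s f\|_{\Lp}$ and the fact that Fourier multipliers commute. Your density step (Fourier support away from the origin) is a harmless extra justification that the paper does not spell out.
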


We will need two further lemmas from \cite{Masaki_2016_FLpKdV}.

We introduce a Lipschitz $\mu-$norm ($\mu>0$) as follows.

\begin{defn}
Let $\mu =N+\beta$ with $N\in \N \cup \{0\},\,0< \beta \le 1$.  For $G:\C\to \C$, we define
\begin{equation*}
\|G\|_{{\rm Lip}\,\mu}:=
\sum_{j=0}^N \sup_{z \in \C\setminus \{0\}}
\frac{|G^{(j)}(z)|}{|z|^{\mu-j}}+\sup_{x\neq y}
\frac{|G^{(N)}(x)-G^{(N)}(y) | }{|x-y|^{\beta}}
\end{equation*}
  where $G^{(j)}$  is the $j-$th derivative of $G.$ We say $G\in  \rm Lip  \ \mu$ if $G \in C^N(\mathbb R)$ and $\|G\|_{\rm Lip \ \mu} < \infty.$
\end{defn}

\begin{lem} (\cite[Lemma 3.6]{Masaki_2016_FLpKdV})\label{Leibniz}
Let $I\subset \R$.  Let $s\ge 0$ and let 
$p,q,p_i,q_i \in (1,\infty)$ be such that
\begin{equation*}
    \frac{1}{p}=\frac{1}{p_1}+\frac{1}{p_2}
    =\frac{1}{p_3}+\frac{1}{p_4},\quad
    \frac{1}{q}=\frac{1}{q_1}+\frac{1}{q_2}=\frac{1}{q_3}+\frac{1}{q_4}.
\end{equation*}
Then
\begin{equation*}
    \left\| |D_x|^s (fg) \right\|_{L_x^p L_t^q (I)} \le C \left( 
    \left\| |D_x|^s f \right\|_{L_x^{p_1} L_t^{q_1} (I)}\left\| g \right\|_{L_x^{p_2} L_t^{q_2} (I)} +\left\| f\right\|_{L_x^{p_3} L_t^{q_3} (I)}\left\| |D_x|^s g \right\|_{L_x^p L_t^q (I)}\right).
\end{equation*}

\end{lem}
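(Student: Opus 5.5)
The natural approach is the classical Coifman--Meyer/Kato--Ponce paraproduct argument, carried out for functions with values in the Banach lattice $L^q_t(I)$ rather than scalar functions. I read the last factor in the statement as $\||D_x|^s g\|_{L_x^{p_4}L_t^{q_4}(I)}$, matching the exponent relations $1/p=1/p_3+1/p_4$, $1/q=1/q_3+1/q_4$. The case $s=0$ is just Hölder's inequality, first in $t$ and then in $x$, so I assume $s>0$. Fix a Littlewood--Paley decomposition in $x$ only: $P_j$ localizes $|\xi|\sim 2^j$ and $S_j=\sum_{k\le j}P_k$. Then I split
\begin{equation*}
fg=\sum_j P_jf\,S_{j-3}g+\sum_j S_{j-3}f\,P_jg+\sum_{|j-k|\le 2}P_jf\,P_kg=:\Pi_1+\Pi_2+\Pi_3 .
\end{equation*}

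The first tool I need is a vector-valued square function characterization:
\begin{equation*}
\left\||D_x|^s h\right\|_{L_x^pL_t^q(I)}\sim \left\|\Bigl(\sum_j 2^{2js}|P_jh|^2\Bigr)^{1/2}\right\|_{L_x^pL_t^q(I)},\qquad 1<p,q<\infty .
\end{equation*}
I would justify it by noting that $L^q(I)$ is a UMD Banach lattice. The Littlewood--Paley multipliers act only in $x$, so the Rubio de Francia / Bourgain vector-valued Littlewood--Paley theory applies. Since $2^{js}|\xi|^{-s}$ is a smooth Mikhlin symbol on the annulus $|\xi|\sim 2^j$, the $2^{js}$ can be traded for $|D_x|^s$ inside $P_j$.

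The second tool is the Fefferman--Stein vector-valued maximal inequality: $\|M_xh\|_{L^p_xL^q_t}\le C\|h\|_{L^p_xL^q_t}$, where $M_x$ is the Hardy--Littlewood maximal operator in $x$ applied for each fixed $t$. Together with the pointwise bound $\sup_j|S_jg(x,t)|\lesssim M_xg(x,t)$, this will control all low-frequency factors.

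For $\Pi_1$, each summand has Fourier support in $|\xi|\sim 2^j$, so the square function estimate applies to it. Its square function is bounded pointwise by
\begin{equation*}
\Bigl(\sum_j2^{2js}|P_jf|^2\Bigr)^{1/2}M_xg .
\end{equation*}
Hölder in $t$ with $1/q=1/q_1+1/q_2$, then in $x$ with $1/p=1/p_1+1/p_2$, together with the maximal inequality and the square function equivalence, gives $\||D_x|^sf\|_{L^{p_1}_xL^{q_1}_t}\|g\|_{L^{p_2}_xL^{q_2}_t}$. The term $\Pi_2$ is symmetric and yields the $(p_3,q_3,p_4,q_4)$ term.

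For $\Pi_3$, the summands only have Fourier support in $|\xi|\lesssim 2^j$. I write $P_k\Pi_3=\sum_{j\ge k-4}P_k(P_jf\,\tilde P_jg)$ and use $s>0$:
\begin{equation*}
2^{ks}|P_k\Pi_3|\lesssim\sum_{j\ge k-4}2^{(k-j)s}M_x\bigl(2^{js}P_jf\,\tilde P_jg\bigr).
\end{equation*}
Young's inequality for the discrete convolution in $j$ and the vector-valued maximal inequality, now with values in $\ell^2(L^q_t)$, reduce this to the square function of $2^{js}P_jf$ times $\sup_j|\tilde P_jg|\lesssim M_xg$. Then I conclude as for $\Pi_1$.

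The main obstacle is justifying the two vector-valued tools in the mixed norm $L^p_x(L^q_t(I))$: the square function equivalence and the $\ell^2$-valued maximal inequality. Both rest on the UMD and lattice property of $L^q_t$ for $1<q<\infty$, which is exactly why the endpoints are excluded. If a direct citation is unavailable, I would derive them from vector-valued Calderón--Zygmund theory, which is $L^q_t$-valued since every operator involved acts only in $x$. I would combine this with randomization (Khintchine's inequality) to pass between square functions and random sums of the $P_jh$.
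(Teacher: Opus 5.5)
The paper gives no proof of this lemma. It quotes it from \cite{Masaki_2016_FLpKdV} and uses it as a black box. Your proposal supplies a self-contained proof by the standard Coifman--Meyer/Kato--Ponce route, and it is correct. The route is: a paraproduct splitting in $x$, a vector-valued Littlewood--Paley characterization of $|D_x|^s$ in $L^p_xL^q_t$, and the Fefferman--Stein inequality with values in $L^q_t$.

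What your route gives over the citation is a transparent account of the hypotheses. Every tool is valid precisely because all exponents lie in $(1,\infty)$. The condition $s>0$ enters only through the summability of $2^{(k-j)s}$ over $j\ge k-4$ in the high--high term; the two low--high paraproducts work for any $s$. So all $s\ge 0$ are handled uniformly, with $s=0$ being H\"older. The price is the vector-valued harmonic analysis, which the citation avoids.

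A few points to tighten:

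\begin{itemize}
\item Your reading of the last factor as $\||D_x|^s g\|_{L^{p_4}_xL^{q_4}_t(I)}$ is not just natural but forced. With the printed $L^p_xL^q_t$-norm, apply the dilation $x\mapsto \lambda x$ and let $\lambda\to\infty$. The second term picks up an extra factor $\lambda^{-1/p_3}$, so you would be left with $\||D_x|^s(fg)\|_{L^p_xL^q_t}\lesssim \||D_x|^sf\|_{L^{p_1}_xL^{q_1}_t}\|g\|_{L^{p_2}_xL^{q_2}_t}$. This fails for $g=e^{iNx}\psi(x)$ as $N\to\infty$.

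\item In $\Pi_3$ the square sum sits inside the $L^q_t$-norm. So the maximal inequality you need is for $L^q_t(\ell^2_j)$-valued functions, not $\ell^2(L^q_t)$-valued ones as you wrote; these norms differ when $q\neq 2$. The correct version is available. For $p=q$ it follows from Fubini and the scalar $\ell^2$-valued Fefferman--Stein inequality at each fixed $t$. For general $1<p<\infty$, apply Benedek--Calder\'on--Panzone theory to the linear operator $G\mapsto(\phi_r*G)_{r>0}$, where $\phi$ is a positive bump and the convolution is in $x$, acting on $G=|F_j|$. Alternatively, use the UMD-lattice theorem you mention.

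\item For $\Pi_1$ and $\Pi_2$ you need the bound $\||D_x|^s\sum_j h_j\|_{L^p_xL^q_t}\lesssim\|(\sum_j 2^{2js}|h_j|^2)^{1/2}\|_{L^p_xL^q_t}$ for pieces $h_j$ with annular Fourier support. It follows by duality from
\[
\|(\sum_j|2^{-js}|D_x|^s\tilde P_j\varphi|^2)^{1/2}\|_{L^{p'}_xL^{q'}_t}\lesssim\|\varphi\|_{L^{p'}_xL^{q'}_t}.
\]
It is worth stating that explicitly.

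\item Run the whole argument for $f,g$ Schwartz in $x$ and pass to the limit, so that the homogeneous Littlewood--Paley decomposition converges.
\end{itemize}
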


\begin{lem}\label{Lipe}
    Let $I\subset \R,\,\mu >1$ and $s\in (0,\mu)$.  Let $q,p_1,p_2,q_1,q_2 \in (1,\infty)$   be such that
    \begin{equation*}
        \frac{1}{p}=\frac{\mu-1}{p_1}+\frac{1}{p_2},\quad \frac{1}{q}=\frac{\mu-1}{q_1}+\frac{1}{q_2}.
    \end{equation*}
    Then for any $G\in {\rm Lip}\,\mu$, 
    \begin{equation*}
    \left\||D_x|^s G(f) \right\|_{L^p_x L^q_t(I)}
    \le C \|G\|_{{\rm Lip}\,\mu}
    \|f \|^{\mu-1}_{L^
    {p_1}_x L^{q_1}_t(I)} 
    \left\| |D_x|^s f\right\|_{L^{p_2}_x L^{q_2}_t(I)  }.
    \end{equation*}
\end{lem}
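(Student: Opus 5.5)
This is the fractional chain rule in $L^p_xL^q_t$ spaces. I would follow the argument of Masaki--Segata \cite{Masaki_2016_FLpKdV}: characterize $|D_x|^s$ through difference quotients in $x$, with the time norm carried along as a vector-valued ($L^q_t$-valued) function. Two cases arise, $0<s<1$ and $s\ge 1$.

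\textbf{Case $0<s<1$.} I would use the square-function characterization of the homogeneous Triebel--Lizorkin-type space. Its vector-valued extension to $L^q_t(I)$ is valid because the Hilbert-space-free Littlewood--Paley theory works for $UMD$ spaces, and $L^q$ with $1<q<\infty$ is $UMD$. This gives
\begin{equation*}
\||D_x|^s F\|_{L^p_xL^q_t}\sim \Bigl\|\Bigl(\int_0^\infty \Bigl(\frac{1}{\tau}\int_{|y|<\tau}\|F(\cdot,x+y)-F(\cdot,x)\|_{L^q_t}dy\Bigr)^{2}\frac{d\tau}{\tau^{1+2s}}\Bigr)^{1/2}\Bigr\|_{L^p_x},
\end{equation*}
as in Stein's characterization of $L^p$ Sobolev spaces, now with $L^q_t$-valued integrands.

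Since $\mu>1$, we have the pointwise bound
\begin{equation*}
|G(f(x+y))-G(f(x))|\le C\|G\|_{{\rm Lip}\,\mu}\bigl(|f(x+y)|^{\mu-1}+|f(x)|^{\mu-1}\bigr)|f(x+y)-f(x)|.
\end{equation*}
Apply H\"older in $t$ with $1/q=(\mu-1)/q_1+1/q_2$. Then bound the averages of $\|f(x+y)\|_{L^{q_1}_t}^{\mu-1}$ over $|y|<\tau$ by the Hardy--Littlewood maximal function $M(\|f\|_{L^{q_1}_t}^{\mu-1})(x)$. This controls the square function of $G(f)$ by
\begin{equation*}
M\bigl(\|f\|^{\mu-1}_{L^{q_1}_t}\bigr)(x)\cdot S_sf(x),
\end{equation*}
plus a similar term. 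The second factor needs care, since the maximal function and the difference quotient are coupled in $y$. I would separate them by splitting $|f(x+y)|^{\mu-1}\le C\bigl(|f(x)|^{\mu-1}+|f(x+y)-f(x)|^{\mu-1}\bigr)$ and treating the second piece by a further H\"older argument. Alternatively, I would use a Fefferman--Stein vector-valued maximal inequality in $x$. Finally, H\"older in $x$ with $1/p=(\mu-1)/p_1+1/p_2$, together with boundedness of $M$ on $L^{p_1/(\mu-1)}$, gives the claim. This requires $p_1>\mu-1$, which should be part of the exponent bookkeeping.

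\textbf{Case $1\le s<\mu$.} I would write $s=k+\theta$ with $k\ge1$ and differentiate, $\partial_x G(f)=G'(f)\partial_x f$ (and its complex analogue with $\bar f$), iterating $k$ times. Here $G'\in{\rm Lip}(\mu-1)$. At each step, the Leibniz rule Lemma \ref{Leibniz} distributes $|D_x|^{s-1}$ between $G'(f)$ and $\partial_xf$. The pieces $|D_x|^{s-1}G'(f)$ are handled by induction on $\lfloor s\rfloor$, with the base case being the one above. The leftover lower-order factors such as $\|\partial_x f\|$ are interpolated between $\|f\|_{L^{p_1}_xL^{q_1}_t}$ and $\||D_x|^sf\|_{L^{p_2}_xL^{q_2}_t}$ via a Gagliardo--Nirenberg inequality in $L^p_xL^q_t$. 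That inequality itself follows from the same vector-valued Littlewood--Paley theory.

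\textbf{Main obstacle.} I expect the hardest step to be this interpolation bookkeeping in mixed norms. One must choose intermediate exponents consistently so that H\"older closes with exactly the powers $\mu-1$ and $1$. It must also hold up to the endpoint $s$ close to $\mu$, where only the H\"older-$\beta$ continuity of $G^{(N)}$ is available. Near that endpoint I would fall back on the difference characterization again, applied to $G^{(N)}(f)$.
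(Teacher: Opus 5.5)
The paper does not prove this lemma. It is imported from Masaki--Segata \cite{Masaki_2016_FLpKdV}, and the paper only applies it with derivative orders below $1$. The benchmark is therefore the standard vector-valued Christ--Weinstein/Kenig--Ponce--Vega argument.

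\textbf{The square-function equivalence is false.} Your displayed equivalence puts $\|\cdot\|_{L^q_t}$ \emph{inside} the $y$-average and the $\tau$-integral, and it fails for $q\neq 2$. Littlewood--Paley theory for the UMD lattice $L^q_t$ (via Khintchine--Maurey) puts the square function inside the time norm: $\||D_x|^sF\|_{L^p_xL^q_t}\sim\|\mathcal S_sF\|_{L^p_xL^q_t}$, where $\mathcal S_sF(t,x)$ is formed at each fixed $t$. For a counterexample, take $f(t,x)=\sum_{k=1}^K\mathbf 1_{J_k}(t)g_k(x)$ with disjoint unit intervals $J_k$ and $g_k$ frequency-localized at $2^k$, with $\bigl||D_x|^sg_k\bigr|\approx 1$ on a fixed set. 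Then $\||D_x|^sf\|_{L^p_xL^q_t}\approx K^{1/q}$, while your square function is $\approx K^{1/2}$.

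Your scheme needs two one-sided bounds:
\begin{itemize}
\item the left-hand side controlled by your square function of $G(f)$, which by this example forces $q\ge 2$;
\item your square function of $f$ (with $L^{q_2}_t$ inside) controlled by $\||D_x|^sf\|_{L^{p_2}_xL^{q_2}_t}$, which forces $q_2\le 2$.
\end{itemize}
Since $1/q=(\mu-1)/q_1+1/q_2>1/q_2$, these are incompatible, so the argument cannot close for any admissible exponents. The cause is applying H\"older in $t$ to the differences at the outset. The pointwise bound on $|G(f(t,x+y))-G(f(t,x))|$ has to be processed at fixed $t$, with maximal functions in $x$. H\"older in $t$ and $x$ is used only at the end, and the maximal terms are handled by the Fefferman--Stein inequality on $L^p_x(L^q_t)$ and $L^p_x(L^q_t(\ell^2))$. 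The vector-valued maximal inequality you mention as an alternative is the right tool, but only in this pointwise-in-$t$ form.

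\textbf{The splitting does not decouple.} Writing $|f(x+y)|^{\mu-1}\lesssim|f(x)|^{\mu-1}+|f(x+y)-f(x)|^{\mu-1}$ leaves a second piece still tied to $y$. The only bound available for it sends you back to $|f(x+y)|^{\mu-1}+|f(x)|^{\mu-1}$, which is circular. What works, at fixed $t$, is to insert a Littlewood--Paley decomposition $f=\sum_kP_kf$ into the difference.
\begin{itemize}
\item For $2^k\tau\le 1$, the mean value theorem gives $|P_kf(x+y)-P_kf(x)|\lesssim 2^k\tau\,P_k^*f(x)$, with $P_k^*$ a Peetre maximal function. This is independent of $y$, so the remaining average of $|f(x+y)|^{\mu-1}$ is at most $M_x(|f|^{\mu-1})(x)$.
\item For $2^k\tau>1$, the average of $|f(x+y)|^{\mu-1}|P_kf(x+y)|$ is at most $M_x(|f|^{\mu-1}|P_kf|)(x)$. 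The companion term with $|P_kf(x)|$ is already decoupled.
\end{itemize}
The resulting sums in $k$ converge because $0<s<1$, and Fefferman--Stein plus H\"older finish the argument. H\"older in $y$ against $M_x(|f|^{(\mu-1)\gamma})^{1/\gamma}$ is the other classical route, but it imposes extra restrictions on the exponents.

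\textbf{The case $s\ge 1$ is incomplete.} Your induction does not reach the top level. For $s\in[N,\mu)$ with $\beta<1$, $G^{(N)}$ is merely $\beta$-H\"older, so your base case does not apply. You would need a fractional chain rule for H\"older functions, followed by a mixed-norm Gagliardo--Nirenberg inequality. You only gesture at this step, though the paper never needs it.
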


Finally, we present a Sobolev-type inequality for the $L_x^r L_t^q$ spaces.

\begin{lem}
Let $I\subset \R$ and let $1\le q \le \infty$ and $1<r_1<r_2<\infty$.  Then
\begin{equation}
    \|  f \|_{L_x^{r_2}L_t^q (I)} \le 
    C\| |D_x|^s f\|_{L_x^{r_1}L_t^q (I)},\label{Sobolev}
\end{equation}
where $s=1/r_1-1/r_2$.
\end{lem}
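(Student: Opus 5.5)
The plan is to prove \eqref{Sobolev} pointwise in $t$ via the Riesz potential representation and then apply a vector-valued Hardy--Littlewood--Sobolev inequality. Since $0<s=1/r_1-1/r_2<1$ (because $1<r_1<r_2<\infty$), for $g:=|D_x|^s f$ we have, with a suitable constant $c_s$,
\begin{equation*}
f(t,x)=|D_x|^{-s}g(t,x)=c_s\int_{\R}\frac{g(t,y)}{|x-y|^{1-s}}\,dy .
\end{equation*}
This holds at least for nice $f$ (e.g. Schwartz in $x$ with Fourier transform vanishing near the origin), and the general case follows by density.

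Next, we take the $L^q_t(I)$ norm for fixed $x$ and use Minkowski's integral inequality (valid for $1\le q\le\infty$):
\begin{equation*}
\|f(\cdot,x)\|_{L^q_t(I)}\le c_s\int_{\R}\frac{\|g(\cdot,y)\|_{L^q_t(I)}}{|x-y|^{1-s}}\,dy = c_s\, \bigl(|\cdot|^{-(1-s)}\ast G\bigr)(x),
\end{equation*}
where $G(y):=\|g(\cdot,y)\|_{L^q_t(I)}$ is a nonnegative scalar function of $x$. The $L^q_t$ norm is thereby pulled inside the fractional integral, leaving a scalar problem.

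Finally, we apply the classical one-dimensional Hardy--Littlewood--Sobolev inequality, which is Proposition \ref{WHLS} with $k_1=k_2=0$ and $n=1$, or directly its dual form. The exponents needed are $\lambda=1-s\in(0,1)$, $1<r_1<r_2<\infty$, and $1/r_2=1/r_1-s$. This gives
\begin{equation*}
\|f\|_{L^{r_2}_xL^q_t(I)}\le C\|G\|_{L^{r_1}_x}=C\||D_x|^sf\|_{L^{r_1}_xL^q_t(I)}.
\end{equation*}

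The only delicate point is justifying the potential representation and the density argument for general $f$ with $|D_x|^s f\in L^{r_1}_xL^q_t(I)$. Possible polynomial or constant ambiguity in $f$ is excluded by interpreting $f=|D_x|^{-s}g$. I would state the lemma for $f$ in the class where this identity holds, which is the class arising in the applications (Duhamel terms and linear solutions). The case $q=\infty$ is harmless, since Minkowski's inequality still applies.
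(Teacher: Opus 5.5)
Your proposal is correct and follows essentially the same route as the paper's proof. Both write $f=|D_x|^{-s}|D_x|^s f$ as a Riesz potential with kernel $|x-y|^{-(1-s)}$, pull the $L^q_t(I)$ norm inside by Minkowski's inequality, and finish with the one-dimensional Hardy--Littlewood--Sobolev inequality; your remark on the class of $f$ for which $f=|D_x|^{-s}g$ holds addresses a point the paper leaves implicit.
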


\begin{proof}
This is essentially obtained as a special case of \cite[Lemma 3.15]{kenig_1993_contraction}.  Here we give a proof for clarity since the lemma in \cite{kenig_1993_contraction} is proved for some special exponents associated with the Korteweg-de Vries equation.  We write $f=|D_x|^{-s} |D_x|^s f$.  Then we represent $|D_x|^{-s}$ in terms of a convolution (see e.g.\cite[Subsection 1.2.1]{Grafakos_2014_Modern} ) to obtain
\begin{equation}
f(t,x)=c\int_{\R} \frac{(|D_x|^s f) (t,y)}{|x-y|^{1-s}} dy.
\end{equation}
By Minkowski's inequality we have 
\begin{equation*}
\| f(\cdot , x) \|_{L_t^q(I)} \le c
\int_{\R} \frac{ \|(|D_x|^s f)(\cdot, y)\|_{L^q(I)}}{|x-y|^{1-s}} dy.
\end{equation*}
Then the desired estimate follows from the Hardy--Littlewood--Sobolev inequality.
\end{proof}
\subsection{Special exponents and $(\alpha, p)-$acceptibility} \label{Spexponent}
Before we start the proof of Theorem \ref{LWPsubcriticalp}, we introduce several constants and prove their properties.  Let $s_0 \ge 0$.  We define $q_j,r_j,j=0,1,2 $ and $\delta$ as follows.
\begin{align*}
\frac{1}{q_0}&=\frac{1-2sp}{p(2\A-1)},\quad 
\frac{1}{r_0}=\frac{\A-1+sp}{p(2\A-1)}\\
\frac{1}{q_1}&=\frac{1-2sp+2s_0p}{p(2\A-1)},
\quad 
\frac{1}{r_1}=\frac{\A-1+sp-s_0p}{p(2\A-1)},\\
    \delta&=\frac{(\A-1+sp)(\rho-1)}{2\A p}-\frac{1}{2},\\
 \frac{1}{q_2}&=\frac{p-1+2p(s-s_0)+2\delta p}{p(2\A-1)},\quad \frac{1}{r_2}=\frac{(\A-1)(p-1)-p(s-s_0)-\delta p}{p(2\A-1)}.
\end{align*}

Let us explain briefly about these exponents and numbers.  The exponents $q_j,r_j,j=0,1,2,\,s_0$, and $\delta$ appear in the estimate of the integral equation equivalent to \eqref{FNLS}.  Indeed, under suitable conditions, $(q_0,r_0,-s)$ and $(q_1,r_1,s_0-s)$ are $(\A,p)$-acceptable, and $(q_2,r_2,s-s_0+\delta)$ is $(\A,p')$-acceptable, which will enable us to estimate the solution to the integral equation for data $\phi \in \HHSP$ in spaces based on $L_x^{r_0}L_t^{q_0}$ and $L_x^{r_1} L_t^{q_1}$.  The number $s_0$ is a regularity we need to establish a solution in $L_x^{r_1} L_t^{q_1}$.  Finally, we need $\delta$ to modify the exponents so that our functional framework is adapted to the scaling subcritical settings.  In fact, we can take $\delta=0$ in the critical case $s=s_c=s_l$.

\begin{lem} \label{exponentlemma}
Let $1<\A\le 2$.  Assume that $\A,p,\rho$ satisfy \eqref{rhoprange} and $s \in [s_l,s_u)$.  Then 
\begin{enumerate}
\item
$(q_0,r_0,-s)$ is $(\A,p)$-acceptable.

\item  $(q_1,r_1,s_0-s)$ is $(\A,p)$-acceptable if
  \begin{equation*}
 \frac{2\A}{2\A-1}<p\le 2,\quad \text{and}\quad 0\le s_0 <s+\frac{2\A p-p-2\A}{2p} ,
  \end{equation*}
or 
 \begin{equation*}
2\le p <\min (4\A-2,4) \quad \text{and} \quad  0\le s_0 <s+\frac{\A-1}{p} ,
  \end{equation*}
\item 
$(q_2,r_2,s-s_0+\delta)$ is $(\A,p')$-acceptable if
  \begin{equation*}
  \frac{2\A}{2\A-1}<p\le 2,\quad \text{and}\quad s-\frac{(\A-1)(p-1)}{p}+\delta\le s_0 <s+\frac{p-1}{2p}+\delta ,
  \end{equation*}
or 
 \begin{equation*}
 2\le p <\min (4\A-2,4) \quad \text{and} \quad  s-\frac{2\A-p}{2p}+\delta  \le s_0 <s+\frac{3p -2\A p+4\A-4}{4p}+\delta,
  \end{equation*}

\item $\displaystyle{0 \le \delta <
\min \left(\frac{\A-1}{2\A},\,\frac{4\A p-3p-4\A+2}{4\A p} \right)}$.

\item $\delta<r_2^{-1}$ if and only if
\begin{equation*}
s_0 >s-\frac{(\A-1)(p-1)}{p}+2\A \delta.
\end{equation*}
\item 
There exists an $s_0$ such that $(q_1,r_1,s_0-s)$ is $(\A,p)$-acceptable, \,$(q_2,r_2,s-s_0+\delta)$ is $(\A,p')$-acceptable, and $\delta<r_2^{-1}$.
\end{enumerate}
\end{lem}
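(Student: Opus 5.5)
The proof is a direct verification: each item reduces to linear inequalities in $s,s_0,\delta$, and I would check them one by one against the definition of $(\A,p)$-acceptability. Recall that this means $q>2$, $r>4$, $0<1/r<\min(1/4,1/2-1/q)$, $1/q+2/r=1/p$ and $\sigma=\A/q+1/r-1/p$. For $(\A,p')$-acceptability replace $1/p$ by $1-1/p$; this requires $p'>4/3$, i.e.\ $p<4$, which holds by \eqref{rhoprange}.

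\textbf{Items (i)--(iii): scaling conditions.} First I check the two scaling identities, which are pure algebra.
\begin{itemize}
\item For $(q_0,r_0,-s)$: $1/q_0+2/r_0=(1-2sp+2\A-2+2sp)/(p(2\A-1))=1/p$, and $\A/q_0+1/r_0-1/p=-s$.
\item For $(q_1,r_1)$: $s_0$ enters $1/q_1$ with coefficient $+2$ and $1/r_1$ with coefficient $-1$, so the sum is unchanged and $\sigma$ shifts by $(2\A-1)s_0p/(p(2\A-1))=s_0$.
\item For $(q_2,r_2)$: $1/q_2+2/r_2=(p-1)(2\A-1)/(p(2\A-1))=1/p'$, and $\sigma=s-s_0+\delta$ follows the same way.
\end{itemize}

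\textbf{Items (i)--(iii): open inequalities.} What remains is $0<1/r<1/4$ and $1/r<1/2-1/q$; these also give $q>2$. Each is a linear inequality in $s$ (or in $s_0$), and I would translate each one into a constraint.
\begin{itemize}
\item In (i), $1/r_0>0$ is automatic. The condition $1/r_0<1/4$ should become $s<(2\A p-p-4\A+4)/(4p)$, one component of $s_u$. The condition $1/r_0<1/2-1/q_0$ should become $s<1/(2p)$, with $s\ge s_l$ entering only through positivity when $p\le 2$.
\item In (ii), the analogous conditions become the stated upper bounds on $s_0$. Which of the two bounds is binding depends on whether $p\le2$ or $p\ge2$, and this explains the case split.
\item In (iii), the lower bound on $s_0$ comes from $1/r_2<\min(1/4,1/2-1/q_2)$ and the upper bound from $1/r_2>0$. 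Again which constraint binds switches at $p=2$.
\end{itemize}

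\textbf{Items (iv) and (v).} For (iv), $\delta$ is increasing in $s$. At $s=s_l$ we have $sp=\A p/(\rho-1)-(\A-1)$, so $\delta=0$; hence $\delta\ge0$. The upper bound on $\delta$ is equivalent to an upper bound on $s$, and I expect it to coincide with the last two components of $s_u$, namely $(6\A p-3p-4\A+2)/(2p(\rho-1))-(\A-1)/p$ and $2\A(2\A-1)/(p(\rho-1))-(\A-1)/p$. Item (v) is immediate: $\delta<1/r_2$ rearranges to $s_0>s-(\A-1)(p-1)/p+2\A\delta$.

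\textbf{Item (vi): the main obstacle.} This is the step I expect to be hardest. I must show that the intersection of the admissible $s_0$-intervals from (ii), (iii) and (v), together with $s_0\ge0$, is nonempty. The lower endpoints are $0$, $s-(\A-1)(p-1)/p+2\A\delta$ (note $2\A\delta\ge\delta$ dominates), and, for $p\ge2$, $s-(2\A-p)/(2p)+\delta$. The upper endpoints are the bounds from (ii) and $s+(p-1)/(2p)+\delta$ or $s+(3p-2\A p+4\A-4)/(4p)+\delta$.

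I would compare every lower endpoint with every upper endpoint in each regime $p\le2$ and $p\ge2$.
\begin{itemize}
\item Pairs not involving $\delta$ reduce to the range $2\A/(2\A-1)<p<\min(4\A-2,4)$, using $s\ge s_l$ to guarantee the upper bounds exceed $0$.
\item Pairs involving $2\A\delta$ reduce precisely to the bounds on $\delta$ in (iv), i.e.\ to $s<s_u$.
\end{itemize}
The difficulty is bookkeeping: confirming that the four components of $s_u$ and the constraints \eqref{rhoprange} are exactly what each comparison needs. Remark \ref{localremark}(ii) already records the relevant equivalences and can be used as a check.
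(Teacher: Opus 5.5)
Your plan (check the scaling identities, turn each remaining condition into a linear inequality in $s$ or $s_0$, split at $p=2$, and in (vi) compare every lower endpoint with every upper endpoint) is the paper's plan. However, the reduction at its core is incomplete.

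The inequalities $0<1/r<1/4$ and $1/r<1/2-1/q$ give only $1/q<1/2$, not $1/q>0$. By the scaling relation, $1/q>0$ is the separate condition $1/r<1/(2p)$ (resp.\ $1/(2p')$), which is stronger than $1/r<1/4$ when $p>2$ (resp.\ $p<2$). Two ingredients of the statement come from this omitted condition.
\begin{itemize}
\item In (i), $1/q_0>0$ is exactly $s<1/(2p)$. By contrast, $1/r_0<1/2-1/q_0$ reads $(\alpha-sp)/(p(2\alpha-1))<1/2$, i.e.\ the \emph{lower} bound $s>(2\alpha-2\alpha p+p)/(2p)$. That bound is automatic because $s\ge s_l\ge(2-\alpha)/(2p)\ge 0$ and $p>2\alpha/(2\alpha-1)$. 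So for $p>2$ and $1/(2p)<s<(2\alpha p-p-4\alpha+4)/(4p)$, your checklist is satisfied although $1/q_0<0$.
\item In (iii), $1/q_2>0$ is exactly $s_0<s+(p-1)/(2p)+\delta$, the binding upper endpoint for $p\le 2$.
\end{itemize}
Your directions in (iii) are also reversed. Since $1/r_2$ increases with $s_0$:
\begin{itemize}
\item $1/r_2>0$ gives the lower endpoint $s-(\alpha-1)(p-1)/p+\delta$ (binding for $p\le 2$);
\item $1/r_2<1/4$ gives the upper endpoint $s+(3p-2\alpha p+4\alpha-4)/(4p)+\delta$ (binding for $p\ge 2$);
\item $1/q_2+1/r_2<1/2$ gives the lower endpoint $s-(2\alpha-p)/(2p)+\delta$.
\end{itemize}
These lower endpoints come out strict, so the ``$\le$'' in (iii) should really be ``$<$''. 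This is harmless in (vi), where $s_0$ is chosen strictly above every lower endpoint.

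Your bookkeeping of $s_u$ in (iv) and (vi) is off as well.
\begin{itemize}
\item The bound $\delta<(\alpha-1)/(2\alpha)$ is equivalent to $s<(2\alpha-1)/(\rho-1)-(\alpha-1)/p$, not to the fourth component of $s_u$; the two agree only when $p=2\alpha$. As in the paper, it follows from $s<1/(2p)$ and $\rho\le 2p+1$. Only the other bound in (iv) corresponds to a component of $s_u$, namely the third.
\item The fourth component $2\alpha(2\alpha-1)/(p(\rho-1))-(\alpha-1)/p$ enters in (vi) for $p\ge 2$. It comes from the comparison $s-(2\alpha-p)/(2p)+\delta<s+(\alpha-1)/p$, i.e.\ $\delta<(4\alpha-2-p)/(2p)$. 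This pair contains $\delta$ but not $2\alpha\delta$, so it fits neither of your two classes.
\item Two of the pairs containing $2\alpha\delta$ do not reduce exactly to (iv). When $p\le 2$ one gives $\delta<(p-1)/(2p)$, which requires the extra check $p(2\alpha-3)\le 2\alpha-2$. When $p\ge 2$ one gives $\delta<1/4$, which requires $(\alpha-1)/(2\alpha)\le 1/4$. Both checks use $\alpha\le 2$, and the first also uses $p\le 2$.
\end{itemize}
Once you add $1/q>0$ to your checklist and redo these comparisons, the verification goes through as in the paper.
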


\begin{proof}
(i) We first prove that $(q_0,r_0,-s)$ is $(\A,p)$-acceptable.  It is easy to see that
the triplet satisfies the scaling conditions
\begin{equation*}
\frac{1}{q_0}+\frac{2}{r_0}=\frac{1}{p},\quad -s=\frac{\A}{q_0}+\frac{1}{r_0}-\frac{1}{p}.
\end{equation*}
It remains to check that $q_0> 2,\,r_0>4,\,$ and $1/q_0+1/r_0<1/2$. 

$q_0>2$ is equivalent to 
\begin{equation*}
    \frac{p-2\A p+2}{4p} < s <\frac{1}{2p}.
\end{equation*}

    Note that this inequality holds true since $p-2\A p +2<0$ if $ p>\frac{2\A}{2\A-1}$   It is also easy to see that $r_0>4$ is equivalent to
    \begin{equation*}
        s<\frac{2\A p-p-4\A+4}{4p},
    \end{equation*}   
    which follows from $s<s_u$.
    
Finally,
\begin{equation*}
\frac{1}{q_0}+\frac{1}{r_0}=\frac{\A-sp}{p(2\A-1)} <\frac{1}{2}
\end{equation*}
if and only if
\begin{equation*}
s>\frac{2\A-2\A p+p}{2p},
\end{equation*}
which holds true, since $p>\frac{2\A}{2\A-1}$ implies $2\A-2\A p +p<0$.

Therefore, $(q_0,r_0,-s)$ is $(\A,p)$-acceptable.

We prove (ii).  It is easy to check that the triplet $(q,r,\sigma):=(q_1,r_1,s_0-s)$ satisfies  the scaling conditions \eqref{scaling} and \eqref{dgain} for $(\A,p)$-acceptablity.  It is also not difficult to deduce
\begin{align*}
q_1>2& \quad \text{if and only if} \quad s-\frac{1}{2p} <s_0<s+\frac{2\A p -p-2}{4p} (:=\bar{\kappa}_1)\\
r_1>4 & \quad \text{if and only if} \quad 
s-\frac{2\A p -p-4\A+4}{4p}<s_0<s+\frac{\A-1}{p} (:=\bar{\kappa}_2) \\
\frac{1}{q_1}+\frac{1}{r_1}&<\frac{1}{2} \quad \text{if and only if} \quad s_0 <s+\frac{2\A p-p-2\A}{2p} (:=\bar{\kappa}_3).
\end{align*}
By the assumption $s<s_u$, we see that
\begin{equation*}
\max \left( s-\frac{1}{2p},\, 
s-\frac{2\A p -p-4\A+4}{4p} \right) <0.
\end{equation*}

 For $\bar{\kappa}_1,\bar{\kappa}_2,\bar{\kappa}_3$, a computation shows that
\begin{equation*}
    \bar{\kappa}_1-\bar{\kappa}_2 =\frac{\bar{\kappa_3}-\bar{\kappa_2}}{2}=\bar{\kappa}_3-\bar{\kappa}_1=\frac{(2\A-1)(p-2)}{4p}.
\end{equation*}

Thus we have
\begin{equation*}
    \min(\bar{\kappa_1} ,\bar{\kappa_2},\bar{\kappa_3} )
    =\begin{cases}
        \bar{\kappa_3} & \text{if} \quad p\le 2 \\
        \bar{\kappa_2} & \text{if} \quad p\ge 2
 \end{cases}
\end{equation*}
and we see that $(q_1,r_1,s_0-s)$ is $(\A,p)$-acceptable if
\begin{equation*}
    0\le s_0 <\begin{cases}
        \bar{\kappa}_3 & \text{if} \quad p\le 2 \\
        \bar{\kappa}_2 & \text{if} \quad p\ge 2.

    \end{cases}
\end{equation*}
This proves (ii).  The proof of (iii) is similar.  A simple computation shows that $(q,r,\sigma):=(q_2,r_2,s-s_0+\delta)$ satisfies the scaling conditions for the $(\A,p')$-acceptability.  For the other conditions, we see that

\begin{align*}
q_2>2& \quad \text{if and only if} \quad s+\frac{-2\A p +3p-2}{4p} +\delta \,(:=\underline{\kappa}_1) <s_0<s+\frac{p-1}{2p}+\delta (:=\bar{\kappa}_4)\\
r_2>4 & \,\, \text{if and only if} \,\, s-\frac{(\A-1)(p-1)}{p}+\delta (:=\underline{\kappa}_2) <s_0<s+\frac{3p -2\A p+4\A-4}{4p}+\delta\, (:=\bar{\kappa}_5) \\
\frac{1}{q_2}+\frac{1}{r_2}&<\frac{1}{2} \quad \text{if and only if} \quad s-\frac{2\A-p}{2p}+\delta  (:=\underline{\kappa}_3)<s_0.
\end{align*}

Now we have
\begin{align*}
    \underline{\kappa}_1 -\underline{\kappa}_2&=\frac{\underline{\kappa}_3-\underline{\kappa}_2}{2}=\underline{\kappa}_3-\underline{\kappa}_1 =\overline{\kappa}_4-\overline{\kappa}_5=\frac{(2\A-1)(p-2)}{4p}
\end{align*}
from which we see that $(q_2,r_2,s-s_0+\delta)$ is $(\A,p')$-acceptable if
\begin{align}
    \underline{\kappa}_2  <s_0 <\overline{\kappa}_4 \quad \text{when} \quad p \le 2 \\
    \underline{\kappa}_3  <s_0 <\overline{\kappa}_5 \quad \text{when} \quad p \ge 2.
\end{align}

We prove (iv).  $\delta\ge 0$ follows immediately from the assumption $s\ge s_l$.  The inequality  $\delta <\frac{\A-1}{2\A}$ is equivalent to
\begin{equation}
    (\A-1+sp)(\rho-1)-\A p <p (\A-1).
\end{equation}
It is easy to verify this inequality.  Indeed, since $s<1/2p$ and $\rho \le 2p +1$, we have
\begin{equation*}
(\A-1+sp)(\rho-1)-\A p <(\A-1/2)\cdot 2p-\A p=p(\A-1).
\end{equation*}
It remains to prove
\begin{equation}
    \delta < \frac{4\A p-3p-4\A+2}{4\A p}.
\end{equation}
It is easy to check that this is equivalent to 
\begin{equation}
    s< \frac{6\A p-3p-4\A+2}{2p(\rho-1)}-\frac{\A-1}{p}.
\end{equation}
Thus, the inequality follows from the assumption $s<s_u$.

(v) is straightforward.

Finally, we prove (vi).  In view of (ii)(iii)(iv)(v), ti is enough to show that 
\begin{equation}
\max \left( 0,\, s-\frac{(\A-1)(p-1)}{p}+2\A\delta \right) 
<
\min \left( s+\frac{2\A p-p-2\A}{2p},\, s+\frac{p-1}{2p}+\delta \right) \label{s0range1}
\end{equation}
when $p\le  2$ and
\begin{equation}
\begin{aligned}
\max &\left( 0, s-\frac{(\A-1)(p-1)}{p}+2\A\delta,s-\frac{2\A-p}{2p}+\delta \right) \\
<s+\min &\left( \frac{\A-1}{p}, \frac{3p -2\A p +4\A-4}{4p}+\delta\right) \label{s0range2}
\end{aligned}
\end{equation}
when $p\ge 2$.  We first consider the case $p\le 2$.  Clearly, $2\A p-p-2\A>0$ since $p>\frac{2\A}{2\A-1}$ so the right-hand side is positive.  It remains to check that
\begin{equation*}
 s-\frac{(\A-1)(p-1)}{p}+2\A\delta <s+\frac{p-1}{2p}+\delta,
\end{equation*}
and
\begin{equation*}
 s-\frac{(\A-1)(p-1)}{p}+2\A\delta <s+\frac{2\A p-p-2\A}{2p},
\end{equation*}
The two inequalities follow from (iv).  Indeed,  the second inequality holds if and only if
\begin{equation}
    \delta < \frac{4\A p-3p-4\A+2}{4\A p}. \label{deltacondition}
\end{equation}
The first one is equivalent to $\delta <\frac{p-1}{2p}$.  By (iv), it is enough to check that
\begin{equation*}
\delta < \frac{4\A p-3p-4\A+2}{4\A p}<\frac{p-1}{2p}.
\end{equation*}
The second inequality is equivalent to $p(2\A-3)<2\A-2$, which holds true if $p\le 2$ and $\A\le 2$.
  Thus the desired inequality for $p\le 2$ is proved.  We then consider the case $p \ge 2$.  We first show that the right hand side of the inequality in question is positive.  Since $\rho \le 2p+1, s \ge s_l$, and $\delta \ge 0$, we have  
\begin{align*}
 s+\frac{3p-2\A p+4\A-4}{4p}+\delta &\ge s_l +\frac{3p-2\A p+4\A-4}{4p} \\
 & \ge \frac{\A}{2p}-\frac{\A-1}{p}+\frac{3p-2\A p+4\A-4}{4p} \\
 &=\frac{2\A-(2\A-3)p}{4p}.
\end{align*}
The last term is positive if and only if $p(2\A-3)<2\A$ which holds if $\A\le 3/2$.  When $\A>3/2$, the conditon is equivalent to $p<\frac{2\A}{2\A-3}$ which is satisfied since $\A\le 2$ and $p<4$.
\begin{equation*}
    s+\frac{3\A-2\A p+4\A-4}{4p}\ge s_c+\frac{3\A-2\A  p+4\A-4}{4p} =\frac{5\A-2}{4p}>0.
\end{equation*}
Thus, we see that the right hand side of the wanted inequality is positive.  Finally, we compare the other factors of the inequality.  It is not difficult to check that
\begin{align*}
&s-\frac{(\A-1)(p-1)}{p}+2\A\delta
<s+\frac{\A-1}{p}\quad \text{if and only if} \quad \delta<\frac{\A-1}{2\A} \\
&s-\frac{(\A-1)(p-1)}{p}+2\A\delta
< s+\frac{3p -2\A p +4\A-4}{4p}+\delta \quad \text{if and only if} \quad \delta<\frac{1}{4} \\
&s-\frac{2\A-p}{2p}+\delta
<s+\frac{\A-1}{p}\quad \text{if and only if} \quad s<\frac{2\A(2\A-1)}{p}-\frac{\A-1}{p}\\
&s-\frac{2\A-p}{2p}+\delta
< s+\frac{3p -2\A p +4\A-4}{4p}+\delta \quad \text{if and only if} \quad p<4.
\end{align*}
Clearly, these inequalities hold by the assumptions.  In particular, for the second inequality, note that $\delta<\frac{\A-1}{2\A} <\frac{1}{4}$, since $\A\le 2$ and the condition for the third inequality holds since $s<s_u$.
\end{proof}

\section{Well-posedness in  Fourier Sobolev spaces $\HHSP$} \label{proofLWP}
In this chapter, we prove the well-posedness results for \eqref{FNLS}.  Basically, we follow the approach by Masaki and Segata \cite{Masaki_2016_FLpKdV}. 
\subsection{Proof of Theorem \ref{LWPsubcriticalp}}
We first prove Theorem \ref{LWPsubcriticalp}.  We fix $s_0$ so that $(q_1,r_1,s_0-s)$ is $(\A,p)$-acceptable, $(q_2,r_2,s-s_0-\delta)$ is $(\A,p')$-acceptable, and $\delta<r_2^{-1}$.  This is possible by Lemma \ref{exponentlemma}.

Let $T>0$ and $I_T:=[0,T]$.  We set

\begin{equation*}
S(T):=\{u \,|\, u \in L^{r_0}_x L_t^{q_0}  (I_T),\quad |D_x|^{s_0}\, u
\in L^{r_1}_x L^{q_1}_t (I_T)\,\},
\end{equation*}
equipped with the norm
\begin{equation*}
\|u\|_{S(T)}:=\| u\|_{L^{r_0}_x L_t^{q_0} (I_T)}+\left\||D_x|^{s_0}u \right\|_{L^{r_1}_x L^{q_1}_t (I_T)}.
\end{equation*}

The proof of Theorem \ref{LWPsubcriticalp} proceeds as follows.  We first construct a solution \eqref{gFNLS} in $S(T)$ via a fixed point argument.  Indeed, using the Fefferman--Stein estimate and inhomongeneous Strichartz estimates in $L_x^rL_t^q$, we can estimate the $S(T)$-norm of the integral equation equivalent to the Cauchy problem.  In particular, the non-linear part of the equation can be estimated in terms the norm based on $\| |D_x^{s_0-\delta} |u|^{\rho-1}u \|_{L^{r_2'}_xL_t^{q_2'}}$.  Then, the non-linear estimates allow us to bound the norm using $\|u\|_{L^{r_0}_x L_t^{q_0}}$ and $\||D_x|^{s_0} u\|_{L_x^{r_1}L_t^{q_1}}$, which leads to the existence of a solution in $S(T)$ for some small enough $T>0$. Finally, we show the persistence property of the solution using the dual of  Fefferman--Stein estimate \eqref{GFSdualprop}.

We first establish a unique solution of \eqref{FNLS} in $S(T)$.  For $M>0$ we set $$\mathcal{\dot{B}}_M:=\left\{ \phi \in \HHSP\,|\, \|\phi \|_{\HHSP} \le M \right\}.$$
\begin{prop}\label{KeyEUthm}
Let $\A,p,s$ be as in Theorem \ref{LWPsubcriticalp}.  Then:
\begin{enumerate}
    \item For any $\phi \in \HHSP$ there exist $T\sim \| \phi \|_{\HHSP}^{-\frac{\A}{s-s_c} }$ and a unique solution $u \in S(T)$ of \eqref{FNLS}.  
    \item For any $M>0$, there exists $T_M>0$ with the following properties: for any $\phi \in \mathcal{\dot{B}}_M$ \eqref{FNLS} has a unique solution $u \in S(T_M)$.  Furthermore, the map data-solution is Lipschitz from $\dot{\mathcal{B}}_M \to S_{T_M}$.  More precisely, there is $C>0$ such that
\begin{equation}
    \|u_1-u_2\|_{S(T_M)}
    \le C \|\phi_1-\phi_2\|_{\HHSP} \label{Cdependenceprop}
\end{equation}
for any two solutions $u_1,u_2 \in S(T_M)$ of \eqref{FNLS} with $u_j(0)=\phi_j \in \mathcal{\dot{B}}_M, j=1,2$.
\end{enumerate}
\end{prop}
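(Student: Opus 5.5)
We will run a contraction argument for the Duhamel map
\[
\Phi(u)(t):=U_{\A}(t)\phi+i\int_0^t U_{\A}(t-\tau)\bigl(|u|^{\rho-1}u\bigr)(\tau)\,d\tau
\]
on a ball of $S(T)$. We fix $s_0$ as described at the start of the section, which Lemma \ref{exponentlemma}(vi) allows.

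\textbf{Linear part.} Applying \eqref{Key1} with the $(\A,p)$-acceptable triplets $(q_0,r_0,-s)$ and $(q_1,r_1,s_0-s)$ from Lemma \ref{exponentlemma}(i),(ii) gives
\[
\|U_{\A}(t)\phi\|_{S(T)}\le C\|\phi\|_{\HHSP}.
\]

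\textbf{Duhamel part.} We apply \eqref{key3} to both components of the $S(T)$-norm, taking $(q_2,r_2,s-s_0+\delta)$ as the $(\A,p')$-acceptable triplet. This bounds the Duhamel term by
\[
\bigl\||D_x|^{s_0-\delta}(|u|^{\rho-1}u)\bigr\|_{L_x^{r_2'}L_t^{q_2'}(I_T)}.
\]
The exponents $s$ in the two applications are $-$ and $s_0$ shifts respectively, so in each case the required derivative on $F$ is $s_0-\delta$. We estimate this with Lemma \ref{Lipe}, using $G(z)=|z|^{\rho-1}z\in{\rm Lip}\,\rho$ and $\mu=\rho$. The derivative $|D_x|^{s_0-\delta}u$ is placed in some $L_x^{a}L_t^{b}$, and the remaining $\rho-1$ factors of $u$ go into $L_x^{r_0}L_t^{\tilde q}$.

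To reach $|D_x|^{s_0-\delta}u$ from $|D_x|^{s_0}u\in L_x^{r_1}L_t^{q_1}$, we use the Sobolev lemma \eqref{Sobolev} in $x$ to trade $\delta$ derivatives for integrability: $\frac1a=\frac1{r_1}+\delta$. Here the condition $\delta<r_2^{-1}$ from Lemma \ref{exponentlemma}(v) guarantees the exponents stay admissible, i.e.\ $a<\infty$ and $r_2'>1$ compatible.

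In time, we use Hölder on $I_T$ to produce a factor $T^{\theta}$. The $x$-exponents should match exactly:
\[
\frac{1}{r_2'}=\frac{\rho-1}{r_0}+\frac1{r_1}+\delta .
\]
We will verify this by direct computation from the definitions of $q_j,r_j,\delta$; this is what the choice of $\delta$ is designed for. The $t$-exponents then leave the slack
\[
\frac1{q_2'}-\frac{\rho-1}{q_0}-\frac1{q_1}=\theta>0 .
\]
By scaling, this should equal $(s-s_c)/\A$ up to a positive factor, which yields the $T\sim\|\phi\|^{-\A/(s-s_c)}$ of part (i).

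\textbf{Contraction.} With the above bounds we obtain
\[
\|\Phi(u)\|_{S(T)}\le C\|\phi\|_{\HHSP}+CT^{\theta}\|u\|_{S(T)}^{\rho},
\]
together with a difference estimate of the same type. The difference is handled using
\[
G(u)-G(v)=\int_0^1 G'(v+\lambda(u-v))\,(u-v)\,d\lambda,
\]
combined with Lemma \ref{Leibniz} and Lemma \ref{Lipe} for $G'\in{\rm Lip}(\rho-1)$; this requires $\rho-1>1$, which \eqref{rhoprange} ensures. Choosing the radius $2C\|\phi\|$ and $T$ small then gives a contraction.

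In the critical endpoint $s=s_l=s_c$ (excluded unless $\rho<2p+1$) we would have $\theta=0$. We do not need that case here, because it is exactly the excluded $(\rho,s)\neq(2p+1,s_l)$.

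\textbf{Uniqueness and Lipschitz dependence.} Uniqueness in all of $S(T)$, not just in the ball, follows by applying the difference estimate on short subintervals, where the $S$-norm of $u$ is small by absolute continuity. Part (ii) follows because $T$ depends only on $M$. The Lipschitz bound \eqref{Cdependenceprop} comes from the same difference estimate, absorbing $CT_M^{\theta}(\ldots)\|u_1-u_2\|_{S}$ into the left side.

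\textbf{Main obstacle.} The main obstacle is the exponent bookkeeping in the nonlinear estimate: checking that the Hölder and Sobolev splits land exactly on $(r_2',q_2')$, and that all intermediate exponents lie in $(1,\infty)$ as Lemma \ref{Lipe} requires. This is where the constraints $\delta<r_2^{-1}$, $\delta\ge0$ and $s<s_u$ are used. If the exact $x$-matching fails, we would first try placing the $\delta$ derivative loss on one of the $u$ factors in $L_x^{r_0}L_t^{q_0}$ instead.
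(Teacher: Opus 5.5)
\textbf{There is a genuine gap in the nonlinear estimate, at exactly the step you flagged as the main obstacle.} Two things are wrong there.

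First, \eqref{Sobolev} runs the other way. It lowers the number of derivatives while \emph{raising} the Lebesgue exponent. So from $|D_x|^{s_0}u\in L^{r_1}_xL^{q_1}_t$ you get $|D_x|^{s_0-\delta}u\in L^{a}_xL^{q_1}_t$ with $\frac1a=\frac1{r_1}-\delta$, not $\frac1{r_1}+\delta$.

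Second, the identity you plan to verify, $\frac1{r_2'}=\frac{\rho-1}{r_0}+\frac1{r_1}+\delta$, is false whenever $\delta>0$. Computing from the definitions of $r_0,r_1,r_2,\delta$ gives instead
\[
\frac{1}{r_2'}+\delta=\frac{1}{r_1}+\frac{\rho-1}{r_0}.
\]
This identity tells you where the $\delta$ must be spent: on the nonlinearity, not on $u$. The paper applies \eqref{Sobolev} to $|D_x|^{s_0-\delta}G(u)$ itself, where $G(u)=|u|^{\rho-1}u$:
\[
\bigl\||D_x|^{s_0-\delta}G(u)\bigr\|_{L^{r_2'}_xL^{q_2'}_t}\le C\bigl\||D_x|^{s_0}G(u)\bigr\|_{L^{\tilde r}_xL^{q_2'}_t},\qquad \frac1{\tilde r}=\frac1{r_2'}+\delta=\frac1{r_1}+\frac{\rho-1}{r_0}.
\]
It then uses H\"older in $t$ to extract $T^{(\rho-1)(s-s_c)/\A}$. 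Finally it applies Lemma \ref{Lipe} at derivative level $s_0$, which lands exactly on the two $S(T)$ norms. The hypothesis $\delta<r_2^{-1}$ of Lemma \ref{exponentlemma}(v) is precisely the condition $\tilde r>1$ needed for this Sobolev step. It does not give the admissibility you attribute to it.

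\textbf{Simply correcting the sign does not rescue your route.} With $\frac1a=\frac1{r_1}-\delta$ the $x$-exponents do match. However, the route then needs two conditions that Lemma \ref{exponentlemma} does not supply:
\begin{enumerate}
\item $a<\infty$, i.e.\ $\delta<r_1^{-1}$;
\item $s_0\ge\delta$, so that Lemma \ref{Lipe} (or H\"older) can be applied at order $s_0-\delta$.
\end{enumerate}
Together these force $2\A\delta<s+\frac{\A-1}{p}$, i.e.\ $s<\frac{\A}{\rho-2}-\frac{\A-1}{p}$. This falls short of $s_u$ for admissible parameters. For $\A=p=2$, $\rho=5$ it gives $s<1/6$, whereas the proposition covers $0<s<s_u=1/4$. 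Your fallback of loading the loss onto a factor in $L^{r_0}_xL^{q_0}_t$ does not help, since those factors carry no derivatives to trade.

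Once the nonlinear bound is done as above, the rest of your argument goes through as you describe. This includes the linear estimate, the contraction, the difference estimate via $G'\in{\rm Lip}\,(\rho-1)$, uniqueness on short subintervals, and the Lipschitz bound \eqref{Cdependenceprop}.
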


\textbf{Proof of Proposition \ref{KeyEUthm}}.
We first prove (i).  We estimate the right-hand side of the corresponding integral equation
\begin{equation*}
    u(t)=U_{\A}(t)\phi +i\int^t_0 U_{\A}(t-\tau) |u(\tau) |^{\rho-1} u(\tau) d\tau.
\end{equation*}
Since $(q_0,r_0,-s)$ and $(q_1,r_1,s_0-s)$ are $(\A,p)$-acceptable, we may apply \eqref{Key1} to obtain
\begin{align*}
    \|U_{\A}(t)\phi \|_{ L^{r_0}_x L_t^{q_0}(I_T) }  &=  \||D_x|^{s+(-s )}U_{\A}(t)
    \phi \|_{ L^{r_0}_x L_t^{q_0} (I_T) } \\
    & \le C \| \phi \|_{\HHSP}
\end{align*}
and
\begin{align*}
     \left\| |D_x|^{s_0} U_{\A}(t) \phi \right\|_{L_x^{r_1}L_t^{q_1}(I_T)} 
    &= \left\| |D_x|^{s+(s_0-s)} U_{\A}(t)\phi \right\|_{L_x^{r_1}L_t^{q_1} (I_T)} \\
     &\le C\|\phi\|_{\HHSP}. 
 \end{align*}
By these estimates, we get
\begin{equation}
    \|U_{\A}(t)\phi \|_{S(T)} \le C\|\phi \|_{\HHSP}.
\end{equation}
 For the nonlinear term, we write
 \begin{align*}
     \left\|\int^t_0 U_{\A}(t-\tau) |u(\tau) |^{\rho-1} u(\tau) d\tau \right\|_{S(T)}
    &=
    \left\|
    |D_x|^{s+(-s) }  \int^t_0 U_{\A}(t-\tau) |u(\tau) |^{\rho-1} u(\tau) d\tau \right\|_{ L^{r_0}_x L^{q_0}_t (I_T) } \\
    &+
    \left\| |D_x|^{ s+(s_0-s)} \int^t_0 U_{\A}(t-\tau) |u(\tau) |^{\rho-1} u(\tau) d\tau \right\|_{ L^{r_1}_x L^{q_1}_t (I_T) }.
 \end{align*}
 Applying (\ref{key3}), we see that
 the two terms on the right-hand side are smaller than
 \begin{equation}
    C \left\|  |D_x|^{s-(s-s_0+\delta)}|u|^{\rho-1} u  \right\|_{L^{r_2^{'} }_x L^{q_2^{'}}_t (I_T)   }=C \left\| |D_x|^{s_0-\delta} |u|^{\rho-1} u  \right\|_{L^{r_2^{'} }_x L^{q_2^{'}}_t (I_T)   },
    \label{nonlinearterm1}
 \end{equation}
 since $(q_2,r_2,s-s_0+\delta)$ is $(\A,p')$-acceptable.  

 Now we estimate \eqref{nonlinearterm1}.  By the Sobolev-type embedding \eqref{Sobolev} and H\"older's inequality with respect to the time variable, we have
\begin{equation*}
     \left\| |D_x|^{s_0-\delta} |u|^{\rho-1} u  \right\|_{L^{r_2^{'} }_x L^{q_2^{'}}_t (\R)   } \le   C\left\| |D_x|^{s_0} |u|^{\rho-1} u  \right\|_{L^{\tilde{r} }_x L^{q_2^{'}}_t (\R)   } \le
     CT^{\frac{(\rho-1)(s-s_c) }{\A } } C\left\| |D_x|^{s_0} |u|^{\rho-1} u  \right\|_{L^{\tilde{r} }_x L^{\tilde{q}}_t (I_T)   }  ,
 \end{equation*}
where
\begin{equation*}
    \frac{1}{r_2'}=\frac{1}{\tilde{r}}-\delta,\quad \frac{1}{q_2'}=\frac{1}{\tilde{q}}+\frac{\rho-1}{\A}(s-s_c).
\end{equation*}
Note that $\tilde{r}>1$ since $r_2<\delta^{-1}$, and thus \eqref{Sobolev} is applicable.

 Now, observe that
\begin{align}
   \frac{1}{\tilde{r}}&= 1-\frac{1}{r_2}+\delta\\
   &=\frac{1}{r_1}+\frac{\rho-1}{r_0},\\
    &=\frac{1}{r_0}+\left(\frac{1}{r_1}+\frac{\rho-2}{r_0} \right) \left(:=\frac{1}{r_0}+\frac{1}{r_3} \right),
\end{align}
and
\begin{align}
    \frac{1}{\tilde{q}}&=1-\frac{1}{q_2}-\frac{\rho-1}{\A}(s-s_c) \\&=\frac{1}{q_1}+\frac{\rho-1}{q_0}\\
    &= \frac{1}{q_0}+\left(\frac{1}{q_1}+\frac{\rho-2}{q_0} \right)\left(:=\frac{1}{q_1}+\frac{1}{q_3}\right).
\end{align}
Thus, applying Lemma \ref{Lipe} with $G(z)=|z|^{\rho-1}z \in \mathrm{Lip}\,\rho$,  we obtain
\begin{align*}
       \left\| |D_x|^{s_0} |u|^{\rho-1} u  \right\|_{L^{\tilde{r} }_x L^{q_2^{'}}_t (I_T)   } 
       & \le T^{\frac{(\rho-1)(s-s_c)}{\A}} 
       \left\| |D_x|^{s_0} |u|^{\rho-1} u  \right\|_{L^{\tilde{r} }_x L^{\tilde{q}}_t (I_T)   } \\
&   \simleq  CT^{\frac{(\rho-1)(s-s_c)}{\A}} \biggl( \left\||D_x|^{s_0} u \right\|_{L^{r_1 }_x L^{q_1}_t (I_T) } \left\| |u|^{\rho-1}  \right\|_{L^{\frac{r_0}{\rho-1}}_x L^{\frac{q_0}{\rho-1} }_t (I_T)} \\
& + \|u \|_{L^{r_0}_x L^{q_0 }_t (I_T)} 
 \left\| |D_x|^{s_0}|u|^{\rho-1} \right\|_{L^{r_3 }_x L^{q_3}_t (I_T)} \biggr)\\
 &   = CT^{\frac{(\rho-1)(s-s_c)}{\A}} \biggl( \left\||D_x|^{s_0} u \right\|_{L^{r_1 }_x L^{q_1}_t (I_T) } \left\| u  \right\|_{L^{r_0}_x L^{q_0 }_t (I_T)}^{\rho-1} \\
& + \|u \|_{L^{r_0}_x L^{q_0 }_t (I_T)} 
 \left\| |D_x|^{s_0}|u|^{\rho-1} \right\|_{L^{r_3 }_x L^{q_3}_t (I_T)} \biggr).
\end{align*}

Next we estimate the second norm of the second term on the right-hand side.  By Lemma \ref{Lipe},  we have
\begin{align*}
 \left\| |D_x|^{s_0}|u|^{\rho-1} \right\|_{L^{r_3 }_x L^{q_3}_t (I_T)}
 &\le \left\| |\cdot|^{\rho-2} \right\|_{{\rm Lip\,} (\rho-2)}
 \|u\|^{\rho-2}_{L^{r_0}_x L^{q_0 }_t (I_T)}
  \left\| |D_x|^{s_0} u\right\|_{L^{r_1 }_x L^{q_1}_t (I_T)  },\\
  &= C \|u\|^{\rho-2}_{L^{r_0}_x L^{q_0 }_t (I_T)}
  \left\| |D_x|^{s_0} u \right\|_{L^{r_1 }_x L^{q_1}_t (I_T)  }.
\end{align*}

Consequently, we see that the right-hand side of \eqref{nonlinearterm1} can be controlled by $\|u\|_{L^{r_0}_x L^{q_0}_t(I_T)}$ and $\||D_x|^{s_0} u \|_{L^{r_1}_x L^{q_1}_t(I_T)}$  and thus we have 
 \begin{align}
     \left\|\int^t_0 U_{\A}(t-\tau) |u(\tau) |^{\rho-1} u(\tau) d\tau \right\|_{S(T)} &\le
    C \left\| |D_x|^{s_0-\delta}|u|^{\rho-1} u  \right\|_{L^{r_2^{'} }_x L^{q_2^{'}}_t (I_T)   } \\
    & \le 
    C T^{\frac{(\rho-1)(s-s_c)}{\A}}  \| u\|_{S(T)}^{\rho}.
 \end{align}

 Now we set
 \begin{equation*}
     \Phi (u)(t)
     :=U_{\A}(t)\phi +i\int^t_0 U_{\A}(t-\tau) |u(\tau) |^{\rho-1} u(\tau) d\tau.
 \end{equation*}
 Then there exists $C_0>0$ such that
 \begin{equation*}
     \|\Phi u \|_{S(T)}
     \le C_0(\|\phi \|_{\HHSP}+T^{\frac{(\rho-1)(s-s_c)}{\A}} \|u\|_{S(T)}^{\rho} ).
 \end{equation*}
We define
\begin{equation}
  \quad Z(T) \triangleq 
  \{ u\in S(T)\, |\,  \|u\|_{S(T)} \le 2C_0\|\phi \|_{\HHSP}\}
\end{equation}
and 
\begin{equation*}
    T:=K \|\phi \|_{\HHSP}^{-\frac{\A}{s-s_c}},
\end{equation*}
where $K>0$ is a constant.  We assume that
\begin{equation}
    K<(2C_0)^{\frac{\A\rho}{(\rho-1)(s-s_c)}}.\label{Fixedpointcond}
\end{equation}
Then we have
\begin{align*}
    \|\Phi u \|_S
    &\le (C_0+C_0^{\rho+1}2^{\rho} K^{\frac{(\rho-1)(s-s_c)}{\A}} )\|\phi\|_{\HHSP} \\
    &\le 2C_0 \|\phi\|_{\HHSP}. 
\end{align*}
This implies that $\Phi : Z(T)\to Z(T)$ is well defined under the assumption \eqref{Fixedpointcond}.  

Next we estimate the difference $\Phi u_1 -\Phi u_2$ for $u_1,u_2 \in Z(T)$   Arguing as above, we get
\begin{align*}
    \|\Phi u_1 -\Phi u_2 \|_{S(T)}
    &\le 
    C \left\| |D_x|^{s_0-\delta}
    \left( G(u_1) -G(u_2) \right)\right\|_{L^{r_2^{'} }_x L^{q_2^{'}}_t (I_T)   }\\
    &\le  C T^{\frac{(\rho-1)(s-s_c)}{\A}}\left\| |D_x|^{s_0}
    \left( G(u_1) -G(u_2) \right)\right\|_{L^{\tilde{r} }_x L^{\tilde{q}}_t (I_T)   }
 \end{align*}
where $G(u)=|u|^{\rho-1}u$.  We write
\begin{equation*}
    G(u_1)-G(u_2) =
    (u_1-u_2) \int^1_0 G'(\theta u+(1-\theta) u) d
    \theta.
\end{equation*}
Then applying Lemma \ref{Lipe} as above, we have 
\begin{align*}
  &\left\| |D_x|^{s_0}
    \left( G(u_1) -G(u_2) \right)\right\|_{L^{\tilde{r} }_x L^{\tilde{q}}_t (I_T)   } \\
    &\le \||D_x|^{s_0}(u_1 -u_2)\|_{L^{r_1 }_x L^{q_1}_t (I_T) } 
    \int^1_0 \|G'(\theta u_1 +(1-\theta) u_2)  \|_{L^{\frac{r_0}{\rho-1}}_x L^{\frac{q_0}{\rho-1} }_t (I_T)} d  \theta \\
    &+ \| u_1 -u_2\|_{L^{r_0}_x L^{q_0 }_t (I_T)}  \int^1_0
   \| |D_x|^{s_0}
    \{ G'(\theta u_1 +(1-\theta) u_2) \}
    \|_{ L^{r_3 }_x L^{q_3}_t (I_T) }d\theta \\
    &:= I_1+I_2.
\end{align*}
Note that $G' \in {\rm Lip}\,(\rho-1)$ and by definition of the 
Lipschitz norm $G'(z)/|z|^{\rho-1} \le \|G'\|_{{\rm Lip}\, (\rho-1)}$.  Thus
\begin{align*}
    I_1 &\le 
     \||D_x|^{s_0} (u_1 -u_2)\|_{L^{r_1 }_x L^{q_1}_t (I_T) } 
     \|G'\|_{{\rm Lip}\, (\rho-1)}
    \int^1_0 \||\theta u_1 +(1-\theta) u_2|^{\rho-1} \|_{L^{\frac{r_0}{\rho-1}}_x L^{\frac{q_0}{\rho-1} }_t (I_T)} d  \theta \\
    &\le \||D_x|^{s_0}(u_1 -u_2)\|_{L^{r_1 }_x L^{q_1}_t (I_T) } 
     \|G'\|_{{\rm Lip}\, (\rho-1)}
    \int^1_0 (\|u_1 \|_{L^{r_0}_x L^{q_0 }_t (\R)}+ \|u_2\|_{L^{r_0}_x L^{q_0 }_t (I_T)} )^{\rho-1} d  \theta \\
    &\le C(\|u_1\|_{S(T)} +\|u_2\|_{S(T)})^{\rho-1} \|u_1-u_2\|_S.
\end{align*}
For $I_2$ observe that by Lemma \ref{Leibniz} we have
\begin{align*}
    & \| |D_x|^{s_0}
    \{ G'(\theta u_1 +(1-\theta) u_2) \}
    \|_{ L^{r_3 }_x L^{q_3}_t (I_T) }\\
    &\le
    C \|G'\|_{{\rm Lip}\, (\rho-1)}
    \| \theta u_1 +(1-\theta)u_2 \|_{L^{r_0}_x L^{q_0 }_t (\R)}^{\rho-2} \times \| |D_x|^{s_0} (
    \theta u_1 +(1-\theta)u_2)  \|_{L^{r_1 }_x L^{q_1}_t (\R)  } \\
    &\le C (\|u_1\|_{S(T)} +\|u_2\|_{S(T)})^{\rho-1}.
\end{align*}
Therefore, we have
\begin{equation*}
    I_2 \le C(\|u_1\|_{S(T)} +\|u_2\|_{S(T)})^{\rho-1} \|u_1-u_2\|_{S(T)}.
\end{equation*}
Consequently, we see that there exists $C_1>0$ such that
\begin{equation}
  \|\Phi u_1 -\Phi u_2\|_{S(T)}
    \le C_1T^{\frac{(\rho-1)(s-s_c)}{\A}} (\|u_1\|_{S(T)} +\|u_2\|_{S(T)})^{\rho-1} \|u_1-u_2\|_{S(T)}.  \label{difference} 
\end{equation}
By \eqref{difference} we have
\begin{equation*}
    \|\Phi u_1 -\Phi u_2\|_{S(T)}
    \le 2^{\rho-1} C_0^{\rho-1} C_1 K^{\frac{(\rho-1)(s-s_c)}{\A}} \|u_1-u_2\|_{S(T)}
\end{equation*}
for all $u_1,u_2 \in Z(T)$.  Thus $\Phi:Z(T)\to Z(T)$ is a contraction mapping
if we assume moreover that
\begin{equation*}
    K<2^{-\rho}C_0^{1-\rho}C_1.
\end{equation*}
Therefore, by the fixed point theorem, we have 
a solution $u\in Z(T)$ of \eqref{FNLS}.  The uniqueness in the space $S(T)$ follows from \eqref{difference} via a standard argument.  This completes the proof of (i).  The assertion (ii) follows immediately from (i).  Indeed, if we take $T_M:=KM^{-\frac{\A}{s-s_c}}$ for $M>0$, then for any $\phi \in \dot{\mathcal{B}}_M$ there exists a unique solution $u \in S(T_M)$ of \eqref{FNLS} since $T_M\le K\|\phi \|_{\HHSP}^{-\frac{\A}{s-s_c}}$.  Now let $\phi_1,\phi_2 \in \dot{\mathcal{B}}_M$ and let $u_1,u_2 \in S(T_M)$ be solutions with $u_1(0)=\phi_1,\,u_2(0)=\phi_2$.  Then arguing as in the proof of \eqref{difference}, we have
\begin{align*}
  \|u_1-u_2\|_{S(T_M)}
    &\le
    C\|\phi_1-\phi_2\|_{\HHSP}\\
    &+C_1T_M^{\frac{(\rho-1)(s-s_c)}{\A}} (\|u_1\|_{S(T_M)} +\|u_2\|_{S(T_M)})^{\rho-1} \|u_1-u_2\|_{S(T_M)}. 
\end{align*}
The second term of the right-hand side is smaller than
\begin{equation*}
    4^{\rho-1} C_0^{\rho-1}C_1 K^{\frac{(\rho-1)(s-s_c) }{ \A}} \|u_1-u_2\|_{S(T_M)}
\end{equation*}
since $\|u_j\|_{S(T_M)} \le 2C_0 M$.  Thus, this term can be bounded by
\begin{equation*}
\frac{1}{2} \|u_1-u_2\|_{S(T_M)}
\end{equation*}
if we take $K>0$ so small that
\begin{equation}
    4^{\rho-1} C_0^{\rho-1} C_1 K^{\frac{(\rho-1)(s-s_c)}{\A}}<\frac{1}{2}.
    \label{Kcondition}
\end{equation}
 Consequently, we have 
\begin{equation*}
    \|u_1-u_2\|_{S(T_M)} \le 2C\|\phi_1-\phi_2\|_{\HHSP}.
\end{equation*}

This proves \eqref{Cdependenceprop}.  
  \qed \\

\noindent\textit{\textbf{Proof of Theorem \ref{LWPsubcriticalp}} }.
The existence and uniqueness part of the theorem is due to Proposition \ref{KeyEUthm}.  It remains to prove the persistence property, \eqref{Strregularity1}, and the continuous dependence on data of the solution.  We first show (\ref{Strregularity1}).  Let $(q,r,\sigma)$ be $(\A,p)$-acceptable.  We estimate the right hand side of the corresponding integral equation
\begin{equation}
    u(t)=U_{\A}(t)\phi +i\int^t_0 U_{\A}(t-\tau) |u(\tau)|^{\rho-1} u(\tau) d\tau.
\end{equation}
For the linear part, we have
\begin{equation*}
\left\| |D_x|^{s+\sigma} U_{\A}(t) \phi \right\|_{L^{r}_x L^q_t (I_T)} \le C\|\phi \|_{\HHSP}.
\end{equation*}
Noting the $(\A,p')$-acceptablity of $(q_2,r_2,s-s_0+\delta)$ and applying \eqref{key3}, the nonlinear term can be estimated as
\begin{align*}
\left\|\int^t_0 |D_x|^{s+\sigma} U_{\A}(t-\tau) |u(\tau)|^{\rho-1}u (\tau) d\tau\right\|_{L^{r}_x L^{q}_t (I_T)}&\le
 C \left\| |D_x|^s |D_x|^{-(s-s_0+\delta)}|u|^{\rho-1} u  \right\|_{L^{r_2^{'} }_x L^{q_2^{'}}_t (\R)   }.
\end{align*}
We have already seen that the right hand side is bounded by above by
\begin{equation*}
    C T^{\frac{(\rho-1)(s-s_c)}{\A}}  \| u\|_{S(T)}^{\rho}<\infty.
\end{equation*}
Thus we have
\begin{equation*}
    |D_x|^{s+\sigma} u \in L_x^{r}L^{q}_t(I_T).
\end{equation*}
Similarly, we can also show the persistence property of the solution.  We estimate the $\HHSP$-norm 
of the integral equation.  The linear part can be estimated by the trivial identity $\|U_{\A}(t) \phi\|_{\HHSP}=\|\phi \|_{\HHSP},\,\forall t \in \R$.  For the nonlinear part, we apply (\ref{Key2}) to obtain
\begin{align*}
\left\|\int^t_0  U_{\A}(t-\tau) |u(\tau)|^{\rho-1}u (\tau) d\tau\right\|_{L_t^{\infty}(I_T ; \HHSP)}&\le
 C \left\| |D_x|^s |D_x|^{-(s-s_0+\delta)}|u|^{\rho-1} u  \right\|_{L^{r_2^{'} }_x L^{q_2^{'}}_t (\R)   }.
\end{align*}
Arguing as in the proof of (\ref{Strregularity1}), we see that the norm on the left-hand side is finite.  This proves the persistence property of the solution.
This completes the proof of Theorem \ref{LWPsubcriticalp}.

Finally, we prove the continuous dependence on data.  Let $M>0$ and $\phi_1,\phi_2 \in \dot{\mathcal{B}_M}$ and let $u_1,u_2 \in S(T_M)$ be the solution with $u_j(0)=\phi_j,j=1,2$.  Arguing similarly as above we have
\begin{align*}
    \|u_1(t)-u_2(t)\|_{\HHSP}
   & \le \|\phi_1 -\phi_2\|_{\HHSP}+\left\| \int^t_0 U_{\A}(t-\tau) [|u_1|^{\rho-1}u_1 -|u_2|^{\rho-1}u_2 ] d\tau\right\|_{\HHSP} \\
   \le & \|\phi_1 -\phi_2\|_{\HHSP}+
   C_1T_M^{\frac{(\rho-1)(s-s_c) }{\A }} (\|u_1\|_{S(T_M)}+\|u_2\|_{S(T_M)})^{\rho-1} \|u_1-u_2\|_{S(T_M)} \\
    \le & \|\phi_1 -\phi_2\|_{\HHSP} +\frac{1}{2} \|u_1-u_2\|_{S(T_M)},
\end{align*}
where we have used the assumption \eqref{Kcondition} in the last inequality.  Consequently, we get
\begin{equation*}
    \sup_{t\in [0,T_M]}\|u_1(t)-u_2(t)\|_{\HHSP} \le (C+1) \|\phi_1-\phi_2\|_{\HHSP} 
\end{equation*}
by \eqref{Cdependenceprop}.
\bigskip

\subsection{The Critical Case: Proof of Theorem \ref{SmalldataGWP}}. The small data global well-posedness results for $s=s_c$ can be proved in a similar manner.  We set
\begin{equation*}
S:=\{ u|\, u \in L^{r_0}_xL_t^{q_0}(\R),\quad |D_x|^{s_0} u\in L_x^{r_1}L_t^{q_1} (\R)\}
\end{equation*}
with
\begin{equation*}
\|u\|_{S} := \|u \|_{L^{r_0}_xL_t^{q_0}(\R)} +\| |D_x|^{s_0}u \|_{  L_x^{r_1}L_t^{q_1} (\R) }.
\end{equation*}
Then, arguing as in the proof of Theorem \ref{LWPsubcriticalp}, we get
\begin{align}
\|\Phi u \|_{S}
     &\le C_0(\|\phi \|_{\HHSP}+\|u\|_{S}^{\rho} ),\\
       \|\Phi u_1 -\Phi u_2\|_{S}
    &\le C_1 (\|u_1\|_{S} +\|u_2\|_{S})^{\rho-1} \|u_1-u_2\|_{S}.  \label{difference2} 
\end{align}
Let $\epsilon >0$ and assume $\|\phi \|_{\HHSCP} <\epsilon$.  We set $Z_{\epsilon}:=\{u\,|\, \|u\|_S \le 2C_0 \epsilon\}.$  Then, the above estimates give us
\begin{align}
\|\Phi u \|_{S}
     &\le C_0 \epsilon +C_0 2^{\rho}\epsilon^{\rho},\\
       \|\Phi u_1 -\Phi u_2\|_{S}
    &\le 4^{\rho-1} C_0^{\rho-1} C_1 \epsilon^{\rho-1} \|u_1-u_2\|_{S}  \label{difference3} 
\end{align}
for all $u,u_1,u_2 \in Z_{\epsilon}$.
Thus, $\Phi :Z_{\epsilon}\to Z_{\epsilon}$ is well defined and is a contraction mapping if we take $\epsilon$ sufficiently small so that
\begin{equation*}
  \epsilon <\min \left(2^{\frac{\rho}{\rho-1}} C_0^{-1},\, 2^{ \frac{1-2\rho }{\rho-1} } C_0^{-1} C_1^{-\frac{1}{\rho-1}} \right).
\end{equation*}

By the fixed point theorem we get a solution
$u \in Z_{\epsilon}$ of the Cauchy problem.  The other properties of the Theorem can be proved in the same manner.
\qed
\bigskip
\section{Well-posedness in  Inhomogeneous Spaces $\HSP$}\label{proofinhomo}
\subsection{Proof of Corollaries \ref{LWPinhomogeneous} and \ref{SmalldataGWPinhomo}} 

In this section, we prove the well-posedness results in the inhomogeneous Fourier--Sobolev spaces.  Recall first that $q_0,r_0,q_1,r_1$ in Section \ref{proofLWP} depend on $s$.  In this section, we only consider these exponents at $s=s_l$.  More precisely, we set
\begin{align}
\frac{1}{q_0}&=\frac{1}{q_0|_{s=s_l}}=\frac{1}{p}-\frac{2\A}{(2\A-1)(\rho-1)},\\
\frac{1}{r_0}&=\frac{1}{r_0|_{s=s_l}}=\frac{\A}{(2\A-1)(\rho-1)} ,\label{r0defn}\\
\frac{1}{q_1}&=\frac{1}{q_1|_{s=s_l}}=\frac{1}{p}-\frac{2\A}{(2\A-1)(\rho-1)}+\frac{s_0}{2\A-1},  \\
\frac{1}{r_1}&=\frac{1}{r_1|_{s=s_l}}=\frac{\A}{(2\A-1)(\rho-1)}-\frac{s_0}{2\A-1}.\label{r1definition}
\end{align}
Recall that $q_1,r_1$ also depend on $s_0$.  In this sense, we occasionally denote them by $q_1(s_0),r_1(s_0)$ though we mostly write $q_1,r_1$.  We assume that $s_0$ lies in a suitable range.  Let us clarify this before we start the proof.  We set
\begin{equation}
\tilde{s}_0 :=
\begin{cases}
   \max \left(0,\,s_l-\frac{(\A-1)(p-1)}{p} \right)  &\quad \text{if} \quad p\le 2 \\
  \max \left(0,\,s_l-\frac{2\A-p}{2p} \right)
   &\quad \text{if} \quad p\ge 2.
\end{cases}
\end{equation}
Observe that 
\begin{equation*}
 -\frac{(\A-1)(p-1)}{p}\le -\frac{2\A-p}{2p} .
\end{equation*}
if $p\ge 2$ and $\delta=0$ if $s=s_l$.  Thus, $\tilde{s}_0$ is the left-hand side of \eqref{s0range1}, \eqref{s0range2} at $s=s_l$.  Now we assume that $s_0>\tilde{s}_0$ and $s_0$ is sufficiently close to $\tilde{s}_0$.  In particular, $s_0$ satisfies all the assertions of Lemma \ref{exponentlemma} at $s=s_l$.  To prove the corollaries, we need two lemmas.  The first one is the existence of some acceptable triplets and numbers satisfying suitable conditions.

\begin{lem} \label{exponentlem}
There are $q_4,r_4,\sigma_4,q_5,r_5,\sigma_5$ and $\tilde{\delta}>0$ such that $(q_4,r_4,\sigma_4)$ is $(\A,p')$-acceptable, $(q_5,r_5,\sigma_5)$ is $(\A,p)$-acceptable, and
\begin{align}
1-\frac{1}{r_4}+\tilde{\delta}&=\frac{1}{r_5}+\frac{\rho-1}{r_0},\label{tdeltacond}\\ 
-\sigma_4+\tilde{\delta}-s_l&=\sigma_5,\label{tdeltacond2}\\
 \tilde{\delta}&<\frac{1}{r_4},\quad \tilde{\delta}>\sigma_4.\label{tdeltacond3}
\end{align}
\end{lem}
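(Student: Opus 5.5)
The plan is to build both triplets explicitly from the exponents already fixed at $s=s_l$ (where $\delta=0$), using the scaling conditions \eqref{scaling} and \eqref{dgain} as the only constraints, and then to check the open acceptability inequalities.

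\textbf{Step 1: the ansatz.} Fix a parameter $s_0'$ close to $s_0$; by Lemma \ref{exponentlemma} at $s=s_l$ it can be chosen in the admissible range. I take
\begin{equation*}
(q_5,r_5,\sigma_5):=\bigl(q_1(s_0'),\,r_1(s_0'),\,s_0'-s_l\bigr),
\end{equation*}
which is $(\A,p)$-acceptable by Lemma \ref{exponentlemma}(ii). Let $(q_2,r_2)$ denote the exponents of Section \ref{preliminaries} at $s=s_l$, $\delta=0$, with $s_0$ replaced by $s_0'$. Recall the identity $1-\frac{1}{r_2}=\frac{1}{r_1}+\frac{\rho-1}{r_0}$ used in the proof of Proposition \ref{KeyEUthm}. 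Condition \eqref{tdeltacond} then forces
\begin{equation*}
\frac{1}{r_4}=\frac{1}{r_2}+\tilde{\delta},
\end{equation*}
and I define $q_4$ by $\frac{1}{q_4}=\frac{1}{p'}-\frac{2}{r_4}$.

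\textbf{Step 2: solving for $\tilde\delta$.} With this choice, \eqref{dgain} for $p'$ gives
\begin{equation*}
\sigma_4=\frac{\A-1}{p'}+\frac{1-2\A}{r_4}=\sigma_2+(1-2\A)\tilde{\delta},\qquad \sigma_2=s_l-s_0'.
\end{equation*}
Condition \eqref{tdeltacond2} demands $\sigma_4=\tilde{\delta}-s_l-\sigma_5=\tilde{\delta}-s_0'$. Equating the two expressions gives $\tilde{\delta}=s_l/(2\A)$. The conditions \eqref{tdeltacond3} then read:
\begin{itemize}
\item $\tilde{\delta}<1/r_4=1/r_2+\tilde\delta$, which is automatic since $r_2<\infty$;
\item $\tilde\delta>\sigma_4$, which is equivalent to $s_0'>0$.
\end{itemize}
This works whenever $s_l>0$. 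Since $\rho\le 2p+1$ gives $s_l\ge\frac{2-\A}{2p}$, the degenerate case is only $\A=2$, $\rho=2p+1$. In that case, and if needed to gain room in general, I would not tie $(q_5,r_5)$ to the line $q_1(\cdot)$. Instead I would let $1/r_5$ and $\tilde\delta$ be two independent parameters, define $q_5,\sigma_5$ by scaling, and define $(q_4,r_4,\sigma_4)$ from \eqref{tdeltacond}, \eqref{tdeltacond2}. This yields a two-parameter affine family; one linear relation among the parameters comes from the consistency of \eqref{dgain} for $p$ and $p'$.

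\textbf{Step 3: the open inequalities (main obstacle).} It remains to verify the non-scaling conditions of acceptability:
\begin{itemize}
\item $q_4>2$, $r_4>4$, and $\frac{1}{r_4}<\frac12-\frac{1}{q_4}$ for $(q_4,r_4,\sigma_4)$;
\item the analogous inequalities for $(q_5,r_5,\sigma_5)$, which hold by Lemma \ref{exponentlemma}(ii).
\end{itemize}
I expect this to be the main obstacle. The shift $\tilde\delta$ moves $(1/q_4,1/r_4)$ away from the known acceptable point $(1/q_2,1/r_2)$ by $(-2\tilde\delta,\tilde\delta)$, which is not a small perturbation. I would handle it as in the proof of Lemma \ref{exponentlemma}(iii): rewrite each inequality as a bound on $s_0'$ of the form $\underline{\kappa}_j+c_j\tilde\delta<s_0'<\bar{\kappa}_k+c_k\tilde\delta$, then show the resulting interval meets $(\tilde s_0,\infty)$ using \eqref{rhoprange}. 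If the interval turns out empty for part of the parameter range, I would use the second free parameter from Step 2 to shrink $\tilde\delta$ toward $0^+$. Since all conditions are strict, a sufficiently small $\tilde\delta>0$ together with $s_0'$ slightly above $\tilde s_0$ and $\sigma_4<\tilde\delta$ should then follow by continuity from the $\delta=0$ exponents of Lemma \ref{exponentlemma}(vi).
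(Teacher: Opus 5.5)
Your Steps 1--2 are correct and match the paper's starting point. Conditions \eqref{tdeltacond}, \eqref{tdeltacond2} and the two scaling relations force $\tilde\delta=s_l/(2\A)$. Then \eqref{tdeltacond3} reduces to $1/r_2>0$ and $s_0'>0$, i.e.\ $1/r_5<\frac{\A-1}{2\A-1}$ and $1/r_5<1/r_0$.

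The real content of the lemma, however, is the $(\A,p')$-acceptability of $(q_4,r_4,\sigma_4)$. You do not prove this, and the fallback you sketch for it cannot work. There is no second free parameter: the ``consistency relation'' in your two-parameter family is exactly $\tilde\delta=s_l/(2\A)$. So only $1/r_5$ (equivalently $s_0'$) is free, $\tilde\delta$ cannot be shrunk toward $0^+$, and no continuity argument from the $\delta=0$ exponents is available. For the same reason, when $\A=2$, $\rho=2p+1$ no choice gives $\tilde\delta>0$; the paper in effect works with $\tilde\delta=s_l/(2\A)\ge 0$.

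Your primary plan also fails for part of the range. You take $s_0'$ near $s_0$, i.e.\ $s_0'>\tilde s_0$ with $(q_2,r_2,s_l-s_0')$ $(\A,p')$-acceptable, and this can be incompatible with $r_4>4$ when $p>2$. In that case $p'<2$, so acceptability of $(q_2,r_2)$ gives $1/r_2>\frac1{p'}-\frac12$. Hence the requirement $1/r_4=1/r_2+\tilde\delta<\frac14$ forces $\tilde\delta<\frac1p-\frac14$. Take $\A=2$, $p=3.9$, $\rho=7$, which satisfy \eqref{rhoprange} and $s_l<s_u$. Then $\tilde\delta\approx 0.019>\frac1p-\frac14\approx 0.006$. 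The $s_0'$ giving an acceptable $(q_4,r_4,\sigma_4)$ form roughly $(0.006,0.026)$, while $\tilde s_0\approx 0.064$, so your search interval misses them entirely.

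What is needed is to decouple from Lemma \ref{exponentlemma}:
\begin{enumerate}
\item Take $y=1/r_5$ as the single parameter, with $1/r_4=\tilde\delta+\frac{\A-1}{2\A-1}-y$.
\item Translate both acceptability windows, together with $y<1/r_0$ and $y<\frac{\A-1}{2\A-1}$, into one interval for $y$.
\item Prove that interval is nonempty from \eqref{rhoprange}, by case analysis.
\end{enumerate}
This is what the paper's Examples 1--4 do. For $p\ge 2$ they take $1/r_4=\frac14-\varepsilon$ or $1/r_5=\varepsilon$, according to the sign of $R(\A,p,\rho)$. For $p\le 2$ they take $1/r_5=1/r_0-\varepsilon$ or $1/r_5=\frac{\A-1}{2\A-1}-\varepsilon$, according to $\rho$ versus $1+\frac{\A}{\A-1}$.

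For $p\le 2$, your choice of $s_0'$ slightly above $\tilde s_0$ reproduces exactly Examples 3 and 4. So there your ansatz would succeed once the inequalities are actually checked. The obstruction lies in $p>2$, where the correct choice (Example 1) may fall outside the range of Lemma \ref{exponentlemma}.
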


The other key lemma is extra Sobolev-type embeddings for $L_x^rL_t^q$-spaces.

\begin{lem}\label{extrasobolev} Let $T>0$ and let $\A,p,s$ be as in Theorem \ref{subcriticalcondp}. Then:
\begin{enumerate}
\item There exist an $(\A,p)$-acceptable triplet $(q,r,\sigma)$ and $C_T>0$ such that
\begin{equation}
\|u\|_{L^{r_0}_x L_t^{q_0}(I_T)}
\le C_T \left\| |D_x|^{s+\sigma} u \right\|_{L^r_xL_t^q (I_T)} \label{extrasobolev1}
\end{equation}
for any $u$ with $|D_x|^{s+\sigma}u\in L_x^r L_t^q(I_T)$.
\item 
Assume that $s_0>\tilde{s}_0$ and $s_0$ is sufficiently close to $\tilde{s}_0$.  Then there exist an $(\A,p)$-acceptable triplet $(q,r,\sigma)$ and $C_T>0$ such that
\begin{equation}
\||D|^{s_0}u\|_{L^{r_1(s_0)}_x L_t^{q_1(s_0)}(I_T)}
\le C_T \left\| |D_x|^{s+\sigma} u \right\|_{L^r_xL_t^q (I_T)} \label{extrasobolev2}
\end{equation}
for any $u$ with $|D_x|^{s+\sigma}u\in L_x^r L_t^q(I_T)$.
\end{enumerate}
\end{lem}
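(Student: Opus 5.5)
The plan is to combine the Sobolev-type embedding \eqref{Sobolev} in the $x$ variable with H\"older's inequality in $t$ on the finite interval $I_T$; the constant $C_T$ comes only from H\"older. Recall that in this section $q_0,r_0,q_1,r_1$ are frozen at $s=s_l$, while $s\in[s_l,s_u)$ is free. If $s=s_l$, part (i) is trivial: take $(q,r,\sigma)=(q_0,r_0,-s_l)$, which is $(\A,p)$-acceptable by Lemma \ref{exponentlemma}(i), with $C_T=1$. So assume $s>s_l$.

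For (i), I look for an $(\A,p)$-acceptable $(q,r,\sigma)$ with $r<r_0$, $q\ge q_0$ and $s+\sigma=\frac1r-\frac1{r_0}$. Given such a triplet, \eqref{Sobolev} (applied with $r_1=r$, $r_2=r_0$) and then H\"older in time give
\begin{equation*}
\|u\|_{L^{r_0}_xL^{q_0}_t(I_T)}\le T^{\frac1{q_0}-\frac1q}\|u\|_{L^{r_0}_xL^{q}_t(I_T)}\le CT^{\frac1{q_0}-\frac1q}\left\||D_x|^{s+\sigma}u\right\|_{L^r_xL^q_t(I_T)}.
\end{equation*}
The scaling condition \eqref{dgain} together with $s+\sigma=\frac1r-\frac1{r_0}$ forces $\frac{\A}{q}-\frac1p=-s-\frac1{r_0}$. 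Comparing with the scaling identity for $(q_0,r_0,-s_l)$, namely $\frac{\A}{q_0}-\frac1p=-s_l-\frac1{r_0}$, I obtain
\begin{equation*}
\frac1q=\frac1{q_0}-\frac{s-s_l}{\A},\qquad \frac1r=\frac12\Bigl(\frac1p-\frac1q\Bigr).
\end{equation*}
Since $s>s_l$, this gives $q>q_0$, hence $r<r_0$, which is exactly what Sobolev and H\"older require.

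It then remains to check that this $(q,r,\sigma)$ is $(\A,p)$-acceptable; this is the main obstacle. The point $(1/q,1/r)$ lies on the segment $\frac1q+\frac2r=\frac1p$ to the left of $(1/q_0,1/r_0)$. Moving left makes $q>2$ and $\frac1q+\frac1r<\frac12$ easier, so the binding constraint is $r>4$. I will write it out as an explicit inequality in $s$ and verify it with the hypotheses $s<s_u$ and \eqref{rhoprange}, in the same style as the proof of Lemma \ref{exponentlemma}(i). The term $\frac{2\A p-p-4\A+4}{4p}$ in $s_u$ is the one I expect to do the job. If that bound turns out too weak, the fallback is to take an intermediate $\tilde s\in(s_l,s]$ and apply the same argument to $|D_x|^{s-\tilde s}u$. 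I do not expect this to work, because the resulting bound involves $|D_x|^{s+\sigma}$ applied to $|D_x|^{s-\tilde s}u$ rather than to $u$, so the derivative exponents no longer match.

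For (ii), I run the same scheme with target $L^{r_1(s_0)}_xL^{q_1(s_0)}_t$ and the function $|D_x|^{s_0}u$. I need $(q,r,\sigma)$ acceptable with $s+\sigma-s_0=\frac1r-\frac1{r_1}$ and $q\ge q_1$. The scaling relation for $(q_1,r_1,s_0-s_l)$ again forces $\frac1q=\frac1{q_1}-\frac{s-s_l}{\A}$, with $r$ determined by \eqref{scaling}. Acceptability for $s$ near $s_l$ then follows from the acceptability of $(q_1,r_1,s_0-s_l)$ given by Lemma \ref{exponentlemma}(ii). The hypothesis that $s_0$ is close to $\tilde s_0$ is what keeps $r_1$ away from $4$, so that the margin is preserved uniformly over the whole range $s<s_u$. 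The check I expect to be hardest is again $r>4$, which I will reduce to an inequality in $s_0$ and $s$ and compare with the endpoints $\bar\kappa_2$ and $\bar\kappa_3$.
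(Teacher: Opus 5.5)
Your route is the paper's: the same triplet, then \eqref{Sobolev} in $x$ and H\"older in $t$ (the order of the two steps is immaterial). Your relation $\frac1q=\frac1{q_0}-\frac{s-s_l}{\A}$ is exactly \eqref{qexpression}. In (ii) it reproduces \eqref{qexpression2}, provided you use the scaling-consistent value $\frac1{q_1}=\frac1p-\frac{2\A}{(2\A-1)(\rho-1)}+\frac{2s_0}{2\A-1}$; the display in Section~\ref{proofinhomo} has $s_0$ in place of $2s_0$. The substance of the lemma is the acceptability check, and your plan for it has a gap. First, ``moving left makes $q>2$ easier'' is only half true. The condition $q>2$ also contains $\frac1q>0$ (in Lemma~\ref{exponentlemma}(i) this is the bound $s<\frac{1}{2p}$), and that half gets harder as $s$ grows. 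For $p>2$ it is the \emph{only} binding left-end constraint. On the line $\frac1q+\frac2r=\frac1p$ one has $\frac1r\le\frac{1}{2p}<\frac14$ as soon as $\frac1q\ge0$, so $r>4$ is automatic. The $\frac{2\A p-p-4\A+4}{4p}$ entry of $s_u$ only yields $\frac1q>\frac1p-\frac12$, and here $\frac1p-\frac12<0$. You must also prove $s<\frac1p-\frac{\A}{(2\A-1)(\rho-1)}$. This follows from $s<\frac{1}{2p}$ together with $\rho-1>\frac{2\A p}{2\A-1}$, which is what the paper does. For $p\le2$ your $r>4$ computation, using $\rho-1>\frac{4\A}{2\A-1}$, is exactly the paper's.

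Second, in (ii) the mechanism you give for uniformity in $s$ is backwards. Since $\frac1{r_1}=\frac{\A}{(2\A-1)(\rho-1)}-\frac{s_0}{2\A-1}$, taking $s_0$ near the lower endpoint $\tilde{s}_0$ makes $\frac1{r_1}$ larger, i.e.\ it pushes $r_1$ toward $4$, not away. Comparing an $r>4$ inequality with $\bar{\kappa}_2,\bar{\kappa}_3$ is also the wrong bookkeeping, since these encode $\frac1{r_1}>0$ and $\frac1{q_1}+\frac1{r_1}<\frac12$. The left-end constraints in (ii) hold for every $s_0\ge0$: the (ii)-point lies on the same line, with $\frac1q$ exceeding that of (i) by $\frac{2s_0}{2\A-1}$, so they are inherited from (i). Closeness of $s_0$ to $\tilde{s}_0$ is used only at the right end, to make $(q_1,r_1,s_0-s_l)$ acceptable. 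This is where $p<4\A-2$ enters for $p\ge2$, and $p>\frac{4\A-2}{4\A-3}$ for $p\le2$. Those right-end constraints then persist for all $s\ge s_l$, because increasing $s$ moves the point left. A compact way to close both parts is convexity: the acceptable set on the line is an interval. For $s>s_l$, your point in (i) lies strictly between the point of $(q_0,r_0,-s_l)$ and the point of the Section~\ref{preliminaries} triplet $(q_0,r_0,-s)$ with $\frac1{q_0}=\frac{1-2sp}{p(2\A-1)}$ at the current $s$. The latter moves left at rate $\frac{2}{2\A-1}>\frac{1}{\A}$, and both triplets are acceptable by Lemma~\ref{exponentlemma}(i). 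Your point in (ii) lies between the point of (i) and that of $(q_1,r_1,s_0-s_l)$.
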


We postpone the proofs of the two lemmas to the next subsection since they consist mostly of verifying that related exponents satisfy suitable conditions, which is routine but tedious.  We first finish the proof of the corollaries, assuming the lemmas.\\

\noindent\textit{ Proof of Corollary \ref{LWPinhomogeneous}}.  Let $s\in [s_l,s_u)$ and $\phi \in \HSP$.  We want to establish a solution $u\in C([0,T]; \HSP)$ with $u(0)=\phi$.  We first note that 
\begin{equation}
\|u\|_{\HSP} \sim \|u\|_{\Lp}+\|u\|_{\HHSP}
\label{HSPinclusion}
\end{equation}
and $\HSP \subset  \widehat{H^{s_l}_p } \subset \widehat{\dot{H}^{s_l}_p }$.  Thus $\phi \in \widehat{\dot{H}^{s_l}_p } \cap \HHSP  $.  Let $s_0$ be such that $s_0 >\tilde{s}_0$ but it is close enough to $\tilde{s}_0$ so that it satisfies the assumption of Lemma \ref{extrasobolev}.  Then, by Theorem \ref{LWPsubcriticalp}, there is a solution $u_1 \in C([0,T_1]; \widehat{\dot{H}^{s_l}_p })$ of \eqref{FNLS} such that
\begin{equation}
    u_1 \in L^{r_0}_x L_t^{q_0}(I_{T_1}),\quad \text{and}\quad |D_x|^{s_0} u_1 \in L_x^{r_1}L_t^{q_1}(I_{T_1}). \label{corStrregularity-1}
\end{equation}
and
\begin{equation}
    |D_x|^{s_l+\sigma} u_1 \in L^{r}_x L^{q}_t (I_{T_1}) \label{corStrregularity0}
\end{equation}
for any $(\A,p)$-acceptable triplet $(q,r,\sigma)$.  Moreover, by Proposition \ref{KeyEUthm}, the solution is unique in the space
\begin{equation}
S(T)|_{s=s_l} := \{ u\,|\, u \in L_x^{r_0} L_t^{q_0}(I_T),\, |D_x|^{s_0} u \in L_x^{r_1} L_t^{q_1} (I_T)\,\}
\end{equation}
On the other hand, we may also have a solution $u_2 \in C([0,T_2]\,; \HHSP)$ for the same datum $\phi$ such that
\begin{equation}
    |D_x|^{s+\sigma} u \in L^{r}_x L^{q}_t (I_{T_2}) \label{corStrregularity}
\end{equation}
for any $(\A,p)$-acceptable triplet $(q,r,\sigma)$.  The strategy is as follows.  We show the $\Lp$ regularity of $u_1$.  More precisely, we prove
\begin{equation}
\sup_{t\in I_{T_1}} \|u_1 (t) \|_{\Lp}<\infty. \label{Lpregularity}
\end{equation}
Then we prove that $u_1(t)=u_2(t)(:=u(t)),\, t\in [0,\min (T_1,T_2)]$ to conclude the existence of a solution $u \in C([0,\min (T_1,T_2)] \,; \HSP)$ by \eqref{HSPinclusion}.  We prove \eqref{Lpregularity}.  Clearly,\,$\|U_{\A}(t) \phi\|_{\Lp}=\|\phi \|_{\Lp}\le \|\phi \|_{\HSP}$.  We then estimate the Duhamel part of the solution.  By Lemma \ref{exponentlemma}, there are $(\A,p')$-acceptable triplet $(q_4,r_4,\sigma_4)$, $(\A,p)$-acceptable triplet $(q_5,r_5,\sigma_5)$, and $\tilde{\delta}>0$ satisfying \eqref{tdeltacond}--\eqref{tdeltacond3}.

Then, by \eqref{GFSdualprop} , we have
\begin{equation*}
\left\| \int^t_0 U(t-\tau) |u_1(\tau)|^{\rho-1} u_1(\tau) d\tau \right\|_{\Lp} \le C \left\| |D_x|^{-\sigma_4}|u_1|^{\rho-1} u_1 \right\|_{L^{r_4'}_x L_t^{q'_4}(I_T)}
\end{equation*}
We put
\begin{equation}
\frac{1}{r_6}:=1-\frac{1}{r_4}+\tilde{\delta},\quad \frac{1}{q_7}:=1-\frac{1}{q_4}-\frac{1}{q_6}:=1-\frac{1}{q_4}-\left( \frac{1}{q_5}+\frac{\rho-1}{q_0}\right).
\end{equation}
 Then $r_6>1$ since $\tilde{\delta}<r_4^{-1}$.  Moreover, by \eqref{tdeltacond} and the scaling condition \eqref{scaling} for acceptability of $(q_4,r_4,\sigma_4)$ and $(q_5,r_5,\sigma_5)$, we have
\begin{equation*}
\frac{1}{q_7}=2\tilde{\delta}+2-\frac{\rho-1}{p}.
\end{equation*}
It is also easy to see that
$q_7^{-1}\ge 2\tilde{\delta}\ge 0$ since $\rho \le 2p+1$.  Therefore $q_6>1$ and we have

\begin{align*}
 \left\| |D_x|^{-\sigma_4}|u_1|^{\rho-1} u_1 \right\|_{L^{r_4'}_x L_t^{q'_4}(I_{T_1})} &\le  C \left\| |D_x|^{\tilde{\delta}-\sigma_4}|u_1|^{\rho-1} u_1 \right\|_{L^{r_6}_x L_t^{q'_4}(I_{T_1})} \\
&\le CT_1^{2\tilde{\delta}+2-\frac{\rho-1}{p}} C\left\| |D_x|^{\tilde{\delta}-\sigma_4}|u_1|^{\rho-1} u_1 \right\|_{L^{r_6}_x L_t^{q_6}(I_T)},
\end{align*}
by the Sobolev embedding and H\"older's inequality with respect to the time variable. 
Since
\begin{equation*}
\frac{1}{r_6}=\frac{1}{r_5}+\frac{\rho-1}{r_0}\quad \text{and} \quad \tilde{\delta}-\sigma_4>0
\end{equation*}
by \eqref{tdeltacond} and \eqref{tdeltacond3}, we may apply the argument in the proof of Theorem \ref{LWPsubcriticalp} to obtain
\begin{equation*}
\left\| |D_x|^{\tilde{\delta}-\sigma_4}|u_1|^{\rho-1} u_1 \right\|_{L^{r_6}_x L_t^{q_6}(I_T)}   \le C\|u_1 \|_{L_x^{r_0}L_t^{q_0}(I_T)  }^{\rho-1} \||D_x|^{\tilde{\delta}-\sigma_4} u_1 \|_{L_x^{r_5}L_t^{q_5}(I_T)  }.
\end{equation*}
By the assumption $\tilde{\delta}-\sigma_4=s_l+\sigma_5$ and thus we have
\begin{equation}
\left\| \int^t_0 U(t-\tau) |u_1(\tau)|^{\rho-1} u_1(\tau) d\tau \right\|_{\Lp} \le  CT_1^{2\tilde{\delta}+2-\frac{\rho-1}{p}  } \|u_1 \|_{L_x^{r_0}L_t^{q_0}(I_{T_1})  }^{\rho-1} \||D_x|^{s_l+\sigma_5} u_1 \|_{L_x^{r_5}L_t^{q_5}(I_{T_1})  }.
\end{equation}
By \eqref{corStrregularity-1} and \eqref{corStrregularity0}, the norms on the right-hand side are finite.  Thus we have 
\eqref{Lpregularity}.  We now show that $u_1=u_2$.  We choose $s_0$ so that it satisfies the assertion (ii) of Lemma \ref{extrasobolev}.  Then, by \eqref{corStrregularity}, \eqref{extrasobolev1}, and \eqref{extrasobolev2}, we see that $u_2 \in L_x^{r_0}L_t^{q_0}(I_{T_2})$ and $|D_x|^{s_l}u_2 \in L_x^{r_1}L_t^{q_1}(I_{T_2})$, which implies that $u_2\in S(T_2)|_{s=s_l}$.  Thus, by the uniqueness, we have $u_1=u_2$ on $[0,T],$ where $T=\min (T_1,T_2)$. Consequently, we have shown the existence of a solution $u$ for $\phi \in \HSP$ with the persistence property.

This proves Corollary \ref{LWPinhomogeneous}.  Corollary \ref{SmalldataGWPinhomo} can be proved in the same way as Corollary \ref{LWPinhomogeneous}.  In fact, the proof is even simpler, since we do not have to discuss the uniquenss for the case $s=s_l=s_c$.  Note that since the above argument of obtaining the $L^p$ regularity applies for finite time intervals $[0,T],T<\infty$ we can only show that $u \in L^{\infty}_{loc}(\R ;\HSCP)$.  However, we may have $u\in L^{\infty}(\R ; \HSCP)$ if $1/q_7=2\tilde{\delta}+2-\frac{\rho-1}{p}=0$.  This happens only when $p=2,\rho=5$.

\qed

\subsection{Proof of Lemma \ref{exponentlem} and \ref{extrasobolev}}

\noindent\textit{Proof of Lemma \ref{exponentlem}}.  Observe first that $\tilde{\delta}$ is determined by \eqref{tdeltacond}, \eqref{tdeltacond2}.  Indeed, we have
\begin{equation}
    \tilde{\delta}=\frac{s_l}{2\A}=\frac{1}{2(\rho-1)}-\frac{\A-1}{2\A p}.
    \label{tdeltaexpression}.
\end{equation}
To see this, observe first that, by the scaling conditions \eqref{scaling} and \eqref{dgain} 
\begin{equation}
\sigma_4=\frac{\A}{q_4}+\frac{1}{r_4}-\frac{1}{p'}=\A \left(\frac{1}{p'}-\frac{2}{r_4} \right)+\frac{1}{r_4}-\frac{1}{p'}=\frac{\A-1}{p'}-\frac{2\A-1}{r_4} \label{r4expression}
\end{equation}
and 
\begin{equation}
    \sigma_5=\frac{\A}{q_5}+\frac{1}{r_5}-\frac{1}{p}=\frac{\A-1}{p}-\frac{2\A-1}{r_5} \label{r5expression}
\end{equation}
By \eqref{r4expression}, \eqref{r5expression}, and \eqref{tdeltacond2}, we have
\begin{equation}
    \tilde{\delta}=
    \sigma_4+\sigma_5+s_l
    =\A-1-(2\A-1)\left(\frac{1}{r_4}+\frac{1}{r_5}\right)+s_l.\label{tdeltaexpression2}
\end{equation}
On the other hand, we have
\begin{equation}
    \frac{1}{r_4}+\frac{1}{r_5}=\tilde{\delta}+\frac{\A-1}{2\A-1}, \label{r4r5r}
\end{equation}
by \eqref{tdeltacond}.  Thus we get \eqref{tdeltaexpression} from \eqref{tdeltaexpression2} and \eqref{r4r5r}.
Moreover, \eqref{r4r5r} implies that, once we fix either $r_4$ or $r_5$, the other is uniquely determined. The remaining exponents $\sigma_4,q_5,\sigma_5$ are also given by $r_4,r_5$ via the scaling conditions of the acceptability.  Therefore, the proof is reduced to the problem of choosing a suitable $r_4$ or $r_5$.  We will present four choices of $r_4$ or $r_5$ that give acceptable triplets with the desired condition.  We need additional notations for Example $1$ and $2$ below.  We set
\begin{equation*}
R(\A,p,\rho) :=\frac{1}{2(\rho-1)}-\frac{\A-1}{2\A p} -\frac{1}{4}+\frac{\A-1}{2\A-1},
\end{equation*}
and
\begin{align*}
A_1(\A,p):= \{\rho \in I_{\A,p} \,|\, R(\A,p,\rho) \ge 0\,\},\quad 
A_2(\A,p):= \{\rho \in I_{\A,p} \,|\, R(\A,p,\rho) < 0\,\},
\end{align*}
where
\begin{equation*}
    I_{\A,p} :=\biggl]\max\left(\frac{6\A-1}{2\A-1},\,\frac{2\A-1+2\A p}{2\A-1}  \right), \,2p+1\biggr].
\end{equation*}

\noindent\textbf{Example 1}.\, $1/r_4=1/4-\varepsilon$, 
where $\varepsilon>0$ is sufficiently small. \\

\noindent\textbf{Example 2}.\, $1/r_5=\varepsilon$, where $\varepsilon>0$ is sufficiently small.\\

Then we have: 

\noindent\textbf{Claim 1}.
\begin{enumerate}
\item Example 1 gives $(q_4,r_4,\sigma_4), (q_5,r_5,\sigma_5)$ and $\tilde{\delta}$ with the desired properties if $2 \le p \le \min (4,4\A-2)$ and $\rho \in A_1(\A,p)$.

\item 
Example 2 gives $(q_4,r_4,\sigma_4), (q_5,r_5,\sigma_5)$ and $\tilde{\delta}$ with the desired properties if $2 \le p \le \min (4,4\A-2)$ and $\rho \in A_2(\A,p)$.
\end{enumerate}

\textit{Proof of Claim 1}.\, (i)  Let $r_4^{-1}=1/4-\varepsilon$.  Note first that $(q_4,r_4,\sigma_4)$ is $(\A, p')$-acceptable if $p'\le 2$ and $\varepsilon$ is sufficiently small (see Figure \ref{fig:acceptable} below.) We check that $\tilde{\delta}<r_4^{-1}$.  The condition is equivalent to 
\begin{equation}
    \frac{1}{2\A} \left( \frac{\A}{\rho-1} -\frac{\A-1}{p} \right) <\frac{1}{4}-\varepsilon.
\end{equation}
Since $\varepsilon$ can be taken small, it is enough to show that
\begin{equation}
    \frac{1}{2\A} \left( \frac{\A}{\rho-1} -\frac{\A-1}{p} \right) <\frac{1}{4}. \label{note1}
\end{equation}
By the assumption, $\frac{1}{\rho-1} <\frac{2\A-1}{2\A p}$. Thus \eqref{note1} holds if
\begin{equation}
    \frac{1}{2\A} \left(
    \A \cdot \frac{2\A-1}{2\A p} -\frac{\A-1}{p} \right) <\frac{1}{4},
\end{equation}
which is equivalent to
\begin{equation}
    \frac{2\A-1}{2p} -\frac{\A-1}{p}<\frac{\A}{2}.
\end{equation}
The condition can be rewritten as $p>\A^{-1}$, which clearly holds true.  Next we show that
$(q_5,r_5,\sigma_5)$ determined by \eqref{r4r5r} and the scaling conditions is $(\A,p)$-acceptable.  When $p\ge 2$, it is enough to check that $0<r_5^{-1}<1/2p$. (see Figure \ref{fig:acceptable} below.)  By \eqref{r4r5r},
\begin{equation}
    \frac{1}{r_5}=\tilde{\delta}+\frac{\A-1}{2\A-1}-\frac{1}{r_4}=\frac{1}{2(\rho-1)}-\frac{\A-1}{2\A p}+\frac{\A-1}{2\A-1}-\frac{1}{4}+\varepsilon.
\end{equation}
Thus, we see that $r_5^{-1}>0$ since $\rho\in A_1(\A,p)$.  To prove that $r_5^{-1}<1/2p$ it is enough to show that
\begin{equation}
\frac{2\A-1}{4\A p} -\frac{\A-1}{2\A p} -\frac{1}{4} +\frac{\A-1}{2\A-1} <\frac{1}{2p}
\end{equation}
since $\varepsilon$ is small and $\frac{1}{\rho-1}< \frac{2\A-1}{2\A p}$.  This is equivalent to
\begin{equation}
(2\A-3) p <\frac{(2\A-1)^2}{\A}.
\end{equation}
If $\A\le 3/2$, the inequality holds.  When $\A>3/2$, the condition is rewritten as 
\begin{equation}
    p<\frac{(2\A-1)^2}{\A (2\A-3)}.
\end{equation}
Now it is easy to see that
\begin{equation*}
    4<\frac{(2\A-1)^2}{\A (2\A-3)},
\end{equation*}
for all $\A\in [3/2,2]$.  Therefore, $1/r_5<1/2p$ for all $p \in [2,\min (4,4\A-2))$ and $\A \in (1,2]$.


\begin{figure}[htbp]
  \centering

\begin{tikzpicture}[scale=20]
\draw[->] (0,0) -- (0.55,0) node[right] {$\frac{1}{q}$};
\draw[->] (0,0) -- (0,0.30) node[left] {$\frac{1}{r}$};

\draw (0.5,0) node[below] {$\frac12$};
\draw (0.25,0) node[below] {$\frac14$};
\draw (0,0.25) node[left] {$\frac14$};

\draw[dashed] (0,0.25) -- (0.25,0.25);
\draw[dashed] (0.25,0.25) -- (0.5,0);
\draw[thick] (0,1/4) -- (1/2,0);

\draw[dashed]
  (0,1/3) -- (1/6,1/4);
\draw[dashed]
  (0,1/3) -- (2/3,0);
\draw[thick]
  (1/6,1/4) -- (1/3,1/6);

\draw[dashed] (0.25,0.25) -- (0.5,0);

\draw[thick]
  (0,1/12) -- (1/6,0); 
\draw[dashed]
  (-1/36,7/72) --(0,1/12);
\draw[dashed]
  (1/6,0)--(1/5,-1/60);
\draw[thick]
  (0,0.25) -- (0.25,0.125);


\node at (1/2,1/8) {$\ell_p\,(p<2)$};


\node at (1/12,1/14) {$\ell_p\,(p>2)$};

\node[above,font=\footnotesize,xshift=12pt]
  at ({1/6},{1/4}) {$(\frac{1}{p}-\frac{1}{2},\frac{1}{4})$};
  
\filldraw[fill=white,draw=black] ({1/6},{1/4}) circle[radius=0.15pt];

\node[above right,font=\footnotesize,xshift=-3pt]
  at ({1/3},{1/6}) {$(1-\frac{1}{p},\frac{1}{p}-\frac{1}{2})$};

\filldraw[fill=white,draw=black] ({1/3},{1/6}) circle[radius=0.15pt];


\filldraw[fill=white,draw=black] ({0},{1/12}) circle[radius=0.15pt];

\filldraw[fill=white,draw=black] ({1/6},{0}) circle[radius=0.15pt];

\draw (0,1/12) node[left,yshift=-4pt] {$\frac{1}{2p}$};

\draw (1/6,0) node[below,yshift=-2pt] {$\frac{1}{p}$};

\end{tikzpicture}

\caption{The lines $\ell_p: 1/q+2/r=1/p$ of acceptable pairs $(1/q,1/r)$.}
  \label{fig:acceptable}
\end{figure}


It remains to check that $\tilde{\delta}>\sigma_4$.  By \eqref{r4expression}, we have
\begin{align}
\sigma_4&=\frac{\A-1}{p'}-\frac{2\A-1}{r_4}\\
&= (\A-1)(1-\frac{1}{p}) -(2\A-1)(\frac{1}{4}-\varepsilon) \\
    &=\frac{\A}{2}-\frac{\A-1}{p}-\frac{3}{4}+\varepsilon (2\A-1).
\end{align}
To show $\sigma_4 <\tilde{\delta}$, it is enough to check that
\begin{equation}
    \frac{\A}{2}-\frac{\A-1}{p}-\frac{3}{4} <\frac{1}{2(\rho-1)}-\frac{\A-1}{2\A p}.
\end{equation}
Since $\frac{1}{\rho-1}\ge \frac{1}{2p}$, it suffices to show
\begin{equation*}
\frac{\A}{2}-\frac{\A-1}{p}-\frac{3}{4} <\frac{1}{4p}-\frac{\A-1}{2\A p}.
\end{equation*}
This is equivalent to
\begin{equation*}
    p(2\A-3)< \frac{4\A^2-5\A +2}{\A}.
\end{equation*}
If $\A\le 3/2$, the inequality holds.  When $\A >3/2$, the condition is
\begin{equation*}
    p<\frac{4\A^2-5\A+2}{\A (2\A-3)}.
\end{equation*}
This also holds true since
\begin{equation*}
    4\le\frac{4\A^2-5\A+2}{\A (2\A-3)},
\end{equation*}
if and only if $-1/4\le \A\le 2$.

We prove (ii).  Let $r_5^{-1}=\varepsilon$.  It is clear from Figure \ref{fig:acceptable} above that $(q_5,r_5,\sigma_5)$ is $(\A,p)$-acceptable since $p\ge 2$.  It is also easy to see that $\tilde{\delta}<r_4^{-1}$ by \eqref{r4r5r} since $r_5^{-1}$ is sufficiently small.  We show that $(q_4,r_4,\sigma_4)$ is $(\A,p')$-acceptable.  We need to check that (see Figure \ref{fig:acceptable} )
\begin{equation}
    \frac{1}{p'}-\frac{1}{2}< \frac{1}{r_4} <\frac{1}{4}.\label{r4accep}
\end{equation}
Since 
\begin{equation*}
\frac{1}{r_4}=\tilde{\delta}+\frac{\A-1}{2\A-1}-\varepsilon=\frac{1}{2(\rho-1)}-\frac{\A-1}{2\A p}+
\frac{\A-1}{2\A-1}-\varepsilon
\end{equation*}
and $\rho \le 2p+1$, the  first inequality holds if
\begin{equation*}
    \frac{1}{4p}-\frac{\A-1}{2\A p}+\frac{\A-1}{2\A-1}>\frac{1}{2}-\frac{1}{p}.
\end{equation*}
This is rewritten as
\begin{equation*}
    p<\frac{(3\A+2)(2\A-1)}{2\A}.
\end{equation*}
Now it is easy to see that this inequality holds since the right-hand side is larger than or equal to $4\A-2$ when $\A\le 2$.  The second inequality of \eqref{r4accep} immediately follows from the assumption that $\rho \in A_2(\A,p)$.  Finally, we show that $\sigma_4<\tilde{\delta}$.  By \eqref{r4expression}, we have
\begin{align}
    \sigma_4 &= \frac{\A-1}{p'}-\frac{2\A-1}{r_4}\\
    &=\frac{\A-1}{p'}-(2\A-1)\left( \tilde{\delta}+\frac{\A-1}{2\A-1}-\varepsilon \right) \\
    &=-\frac{\A-1}{p}-(2\A-1)\tilde{\delta}+(2\A-1)\varepsilon.
\end{align}
Thus $\sigma_4<\tilde{\delta}$ is equivalent to
\begin{equation*}
    2\A\tilde{\delta} >-\frac{\A-1}{p}+(2\A-1)\varepsilon.
\end{equation*}
It is clear that this inequality holds since the right-hand side is negative if $\varepsilon$ is sufficiently small.\qed

We need two more examples to prove the lemma for the case $p\le 2$:

\noindent\textbf{Example 3}.\, We set 
\begin{equation*}
    \frac{1}{r_5}=\frac{\A}{(\rho-1)(2\A-1)} -\varepsilon,
\end{equation*}
where $\varepsilon>0$ is sufficiently small. \\

\noindent\textbf{Example 4}.\, We set
\begin{equation*}
    \frac{1}{r_5}=\frac{\A-1}{2\A-1}-\varepsilon,
\end{equation*}
where $\varepsilon>0$ is small. \\

Then we have:

\noindent\textbf{Claim 2}.
\begin{enumerate}
\item Example 3 gives $(q_4,r_4,\sigma_4),\,(q_5,r_5,\sigma_5)$, and $\tilde{\delta}$ with the desired properties if 
\begin{equation*}
    \frac{2\A}{2\A-1}\le p \le 2 \quad \text{and}\quad \rho \ge 1+\frac{\A}{\A-1}.
\end{equation*}

\item 
Example 4 gives $(q_4,r_4,\sigma_4),\,(q_5,r_5,\sigma_5)$, and $\tilde{\delta}$ with the desired properties if \begin{equation*}
    \A \le 3/2,\quad \frac{2\A}{2\A-1}\le p \le 2 \quad \text{and}\quad \rho \le 1+\frac{\A}{\A-1}.
\end{equation*}

\end{enumerate}

Note that when $\A \ge 3/2$, $\rho$ satisfies the assumption of (i) of the above claim since $\rho >\frac{6\A-1}{2\A-1}$.\\

\noindent\textit{Proof of Claim 2}.\, (i) We first show that $\tilde{\delta}<r_4^{-1}$.  Since
\begin{equation*}
    \frac{1}{r_4}=\tilde{\delta}-\frac{1}{r_5}+\frac{\A-1}{2\A-1},
\end{equation*}
this is equivalent to
\begin{equation*}
\frac{\A-1}{2\A-1} >\frac{1}{r_5}.
\end{equation*}
Thus it is enough to check that
\begin{equation*}
    \frac{\A-1}{2\A-1}\ge \frac{\A}{(\rho-1)(2\A-1)},
\end{equation*}
from which we need
\begin{equation*}
    \rho \ge 1+\frac{\A}{\A-1}.
\end{equation*}
Next we show that $1/p-1/2<1/r_5<1/4$ to prove the $(\A,p)$-acceptablity of $(q_5,r_5,\sigma_5)$.  Since $\varepsilon$ is small enough, the first inequality follows if 
\begin{equation*}
\frac{1}{p}-\frac{1}{2} <\frac{\A}{(\rho-1)(2\A-1)}.
\end{equation*}
Since $\rho \le 2p+1$, this holds true if
\begin{equation*}
    p>\frac{3\A-2}{2\A-1}.
\end{equation*}
We have this inequality since
\begin{equation*}
    \frac{4}{3}\ge \frac{3\A-2}{2\A-1} 
\end{equation*}
for $\A\le 2$.  The second inequality holds true if
\begin{equation*}
    \frac{\A}{(\rho-1)(2\A-1)} \le \frac{1}{4}.
\end{equation*}
It is easy to see that this follows from the assumption $\rho >\frac{6\A-1}{2\A-1}$.  Thus $(q_5,r_5,\sigma_5)$ is $(\A,p)$-acceptable.  Next we show that $(q_4,r_4,\sigma_4)$ is $(\A,p')$-acceptable.  Since $p'\ge 2$, it suffices to show that $0<1/r_4<1/2p'$.  The first inequality is obvious since we have already shown that $1/r_4>\tilde{\delta}(\ge 0)$.  We show that $1/r_4<1/2p'$.  We have
\begin{align*}
    \frac{1}{r_4} &=\tilde{\delta}-\frac{1}{r_5}+\frac{\A-1}{2\A-1}=\frac{1}{2(\rho-1)}-\frac{\A-1}{2\A p}-\frac{\A}{(\rho-1)(2\A-1)}+\varepsilon +\frac{\A-1}{2\A-1} \\
    &=\frac{\A-1}{2\A-1}-\frac{\A-1}{2\A p}-\frac{1}{2(\rho-1)(2\A-1)}+\varepsilon.
\end{align*}
Since $\varepsilon$ is small enough and $\rho \le 2p+1$, the inequality holds true if
\begin{equation*}
    \frac{\A-1}{2\A-1}-\frac{\A-1}{2\A p}-\frac{1}{4p(2\A-1)}<\frac{1}{2}-\frac{1}{2p}.
\end{equation*}
This is equivalent to
\begin{equation*}
    p>\frac{3\A-2}{2\A}.
\end{equation*}
We have this inequality since 
\begin{equation*}
    \frac{2\A}{2\A-1}\ge \frac{3\A-2}{2\A}
\end{equation*}
for $1<\A\le 2$.
Therefore, $(q_4,r_4,\sigma_4)$ is $(\A,p')$-acceptable.  Finally, we prove that $\tilde{\delta}>\sigma_4$.  By \eqref{r4expression}, we have
\begin{align*}
\sigma_4&=\frac{\A-1}{p'}-\frac{2\A-1}{r_4}=\frac{\A-1}{p'}-(2\A-1)\left( \tilde{\delta}+\frac{\A-1}{2\A-1}-\frac{1}{r_5}\right)\\
&=-(2\A-1)\tilde{\delta}-\frac{\A-1}{p}+\frac{2\A-1}{r_5}.
\end{align*}
Thus the inequality is equivalent to
\begin{equation*}
2\A\tilde{\delta} >\frac{2\A-1}{r_5}-\frac{\A-1}{p}.
\end{equation*}
It is clear that this can be rewritten as 
\begin{equation*}
    \frac{1}{r_5}<\frac{\A}{(\rho-1)(2\A-1)},
\end{equation*}
which is obvious by the definition of $r_5$.

(ii)\, We consider Example $4$.  We first note that $\tilde{\delta}<r_4^{-1}$, which is obvious because
\begin{equation*}
    \frac{1}{r_4}=\tilde{\delta}-\frac{1}{r_5}+\frac{\A-1}{2\A-1}=\tilde{\delta}+\varepsilon.
\end{equation*}
We show that $(q_5,r_5,\sigma_5)$ is $(\A,p)$-acceptable by showing that $1/p-1/2<1/r_5<1/4$.  The first inequality holds if
\begin{equation*}
    \frac{1}{p}-\frac{1}{2}<\frac{\A-1}{2\A-1},
\end{equation*}
from which we need
\begin{equation*}
    p>\frac{2(2\A-1)}{4\A-3}.
\end{equation*}
$p$ satisfies this condition since
\begin{equation*}
    \frac{2\A}{2\A-1}\ge \frac{2(2\A-1)}{4\A-3}
\end{equation*}
when $\A\ge 1$.  For the second inequality it is enough to show that
\begin{equation*}
    \frac{\A-1}{2\A-1} \le \frac{1}{4}.
\end{equation*}
Obviously, this holds true if and only if $\A \le 3/2$.  Consequently, $(q_5,r_5,\sigma_5)$ is $(\A,p)$-acceptable when $\A\le 3/2$.  Next we prove that $(q_4,r_4,\sigma_4)$ is $(\A,p')$-acceptable.  Since $r_4^{-1}>\tilde{\delta}\ge 0$, it is enough to show that $1/r_4<1/2p'$.  By the definition of $r_5,\tilde{\delta}$ and \eqref{r4r5r}, we have 
\begin{equation*}
    \frac{1}{r_4}=\frac{1}{2(\rho-1)}-\frac{\A-1}{2\A p} +\varepsilon.
\end{equation*}
Since $\varepsilon$ is small and $\rho>\frac{6\A-1}{2\A-1}$, it suffices to show that
\begin{equation*}
\frac{2\A-1}{8\A}-\frac{\A-1}{2\A p} <\frac{1}{2}-\frac{1}{2p}.
\end{equation*}
This is equivalent to
\begin{equation*}
p>\frac{4}{2\A+1},
\end{equation*}
which holds true since the right-hand side is smaller than or equal to $4/3$ when $\A \ge 1$.  
It remains to show that $\tilde{\delta}>\sigma_4$.  Arguing as in the proof of (i), this amounts to proving
\begin{equation*}
2\A\tilde{\delta} >\frac{2\A-1}{r_5}-\frac{\A-1}{p}.
\end{equation*}
This inequality is equivalent to
\begin{equation*}
    \frac{\A}{\rho-1}>\A-1-(2\A-1)\varepsilon.
\end{equation*}
It is easy to see that this holds true if $\rho \le 1+\A/(\A-1)$.

Now it is clear from the above examples provide $q_4,r_4,\sigma_4,q_5,r_5,\sigma_5$ and $\tilde{\delta}$ with the desired properties for any $p,\A,\rho$ satisfying the assumption s of Theorem \ref{LWPsubcriticalp}.

\qed

\noindent\textit{Proof of Lemma \ref{extrasobolev}}.
To prove (i) we want to apply the Sobolev type inequality \eqref{Sobolev} to obtain
\begin{equation}
\|u\|_{L^{r_0}_x L_t^{q_0}(I_T)}
\le C\left\| |D_x|^{s+\sigma} u \right\|_{L^r_xL_t^{q_0} (I_T)}
\end{equation}
 for which we need
\begin{equation}
    \frac{1}{r}-\frac{1}{r_0} =s+\sigma. \label{sobolevscaling}
\end{equation}
Once the above estimate is obtained, the desired inequality immediately follows from H\"older's inequality with respect to the time variable.
We first determine $q$, and then define $r$ and $\sigma$ according to the scaling conditions
\begin{equation}
    \frac{1}{q}+\frac{2}{r}=\frac{1}{p},\label{sobolevscaling1}
    \end{equation}
    and
    \begin{equation}
        \sigma =\frac{\A}{q}+\frac{1}{r}-\frac{1}{p}. \label{sobolevscaling2}
\end{equation}
By \eqref{sobolevscaling}, \eqref{sobolevscaling2}, and \eqref{r0defn}, we have
\begin{equation*}
    \frac{1}{r}-\frac{\A}{(\rho-1)(2\A-1)}=s+\frac{\A}{q}+\frac{1}{r}-\frac{1}{p},
\end{equation*}
from which we obtain
\begin{equation}
\frac{\A}{q}=\frac{1}{p}-s-\frac{\A}{(\rho-1)(2\A-1)}.\label{qexpression}
\end{equation}
We first prove that $s+\sigma\ge 0$ so that Sobolev's embedding can apply.  We have
\begin{equation*}
    s+\sigma=\frac{1}{r}-\frac{1}{r_0}=\frac{1}{r}-\frac{\A}{(\rho-1)(2\A-1)}.
\end{equation*}
Since $1/r=1/2(1/p-1/q)$, it suffices to show that
\begin{equation*}
\frac{1}{q} \le \frac{1}{p}-\frac{2\A}{(\rho-1)(2\A-1)}.
\end{equation*}
By \eqref{qexpression}, this is equivalent to
\begin{equation*}
    \frac{1}{\A p}-\frac{s}{\A}-\frac{1}{(\rho-1)(2\A-1)} \le \frac{1}{p}-\frac{2\A}{(\rho-1)(2\A-1)}.
\end{equation*}
Now it is easy to see that this can be rewritten as
\begin{equation*}
s\ge \frac{\A}{\rho-1}-\frac{\A-1}{p}=s_l,
\end{equation*}
which is satisfied by the assumption.  It remains to show that $(q,r,\sigma)$ is $(\A,p)$-acceptable.  Since $q,r,\sigma$ satisfy \eqref{sobolevscaling1}, \eqref{sobolevscaling2}, it is enough to verify that $q$ lies in a suitable range.  We first assume that $p\ge 2$.  Then $(q,r,\sigma)$ is $(\A,p)$-acceptable if $0<q^{-1}<1/p$ (see Figure \ref{fig:acceptable} above.)  We first check that $q^{-1}>0$.  By \eqref{qexpression}, the condition is equivalent to
\begin{equation*}
    s<\frac{1}{p}-\frac{\A}{(2\A-1)(\rho-1)}.
\end{equation*}
Since $\frac{1}{\rho-1}\le \frac{2\A-1}{2\A p}$, the inequality holds if $s<1/2p$.  It is clear that this condition is satisfied since $s<s_u\le 1/2p$.  Thus, we see that $0<q^{-1}$.  We can skip the proof of the inequality $q^{-1}<p^{-1}$ here.  For details, see the proof of the assertion (ii) below. 

Next we assume $p\le 2$.  Then $(q,r,\sigma)$ is $(\A,p)$-acceptable if
\begin{equation}
    \frac{1}{p}-\frac{1}{2}<q<1-\frac{1}{p}. \label{acceptablerange1}
\end{equation}
The first inequality is equivalent to
\begin{equation*}
    \frac{1}{p}-\frac{1}{2} <\frac{1}{\A p}-\frac{s}{\A}-\frac{1}{(\rho-1)(2\A -1)}.
\end{equation*}
Since $\rho-1 >\frac{4\A}{2\A-1}$, the equality holds true if
\begin{equation*}
    s<\frac{4-4\A+2\A p-p}{4p}.
\end{equation*}
Thus the condition is fullfilled since the right-hand side is larger than or equal to $s_u$.  Thus the first inequality of \eqref{acceptablerange1} is proved.  As in the case of $p\ge 2$, we can skip the proof of the second inequality.  Consequently, $(q,r,\sigma)$ is $(\A,p)$-acceptable and the assertion (i) is proved.

The proof of (ii) is similar.  We use the Sobolev type embedding with
\begin{equation}
    \frac{1}{r}-\frac{1}{r_1(s_0)}=s+\sigma-s_0
    \label{sobolevscaling3}
\end{equation}
to obtain
\begin{equation}
\left\| |D_x|^{s_0}u \right\|_{L_x^{r_1(s_0)}L_t^{q_1(s_0)}(I_T)} \le C \left\| |D_x|^{s+\sigma} u  \right\|_{L^r_x L_t^{q_1(s_0)} (I_T)} 
\end{equation}
and conclude the proof via the H\"older's inequality.  By \eqref{r1definition}, \eqref{sobolevscaling2}, and \eqref{sobolevscaling3}, we have
\begin{align*}
    \frac{\A}{q}&=\sigma-\frac{1}{r}+\frac{1}{p} =s_0-s-\frac{1}{r_1(s_l)}+\frac{1}{p} \\
    &=s_0-s+\frac{1}{p}-\frac{\A}{(\rho-1)(2\A-1)}+\frac{s_0}{2\A-1}\\
    &=-s+\frac{1}{p}-\frac{\A}{(\rho-1)(2\A-1)}+\frac{2\A }{2\A-1}s_0.
\end{align*}
In particular,
\begin{equation}
    \frac{1}{q}=-\frac{s}{\A}+\frac{1}{\A p}-\frac{1}{(\rho-1)(2\A-1)}+\frac{2s_0 }{2\A-1}. \label{qexpression2}
\end{equation}
We then define $r,\sigma$ by \eqref{sobolevscaling1} and \eqref{sobolevscaling2}.  We first show that $s+\sigma-s_0\ge 0$.  Noting that
\begin{equation*}
    \sigma=\frac{2\A-1}{2q}-\frac{1}{2p}
\end{equation*}
by \eqref{sobolevscaling1} and \eqref{sobolevscaling2}, the inequality is equivalent to
\begin{equation}
    s+\frac{2\A-1}{2q}-\frac{1}{2p} -s_0\ge 0.
\end{equation}
By \eqref{qexpression2}, the left-hand side is equal to
\begin{align*}
s+\frac{2\A-1}{2}\biggl[-\frac{s}{\A}+\frac{1}{\A p}-\frac{1}{(\rho-1)(2\A-1)}&+\frac{2s_0 }{2\A-1}\biggr]-\frac{1}{2p}-s_0 \\
&=\frac{s}{2\A}+\frac{2\A-1}{2\A p}-\frac{1}{2(\rho-1)}-\frac{1}{2p} \\
&=\frac{1}{2\A}(s-s_l).
\end{align*}
Clearly, the right-hand side is non-negative since $s\ge s_l$.  Hence $s+\sigma-s_0\ge 0$.  It remains to prove that $(q,r,\sigma)$ is $(\A,p)$-acceptable.  As in the proof of (i), we show that
\begin{align}
0<\frac{1}{q}&<\frac{1}{p}, \quad  \text{when} \quad p\ge 2 \label{sobolevqrange1} \\
\frac{1}{p}-\frac{1}{2} <\frac{1}{q}& <1-\frac{1}{p}, \quad \text{when} \quad p \le 2.\label{sobolevqrange2}
\end{align}
We observe that
\begin{align*}
    \frac{1}{q}&=\left[\frac{1}{\A p} -\frac{s}{\A} -\frac{1}{(\rho-1)(2\A-1)}  \right] +\frac{2s_0}{2\A-1}\\
    &\ge \frac{1}{\A p} -\frac{s}{\A} -\frac{1}{(\rho-1)(2\A-1)}.
\end{align*}
The right-hand side is ``$q^{-1}$'' in the proof of (i).  Thus, we do not need to check the first inequalities of \eqref{sobolevqrange1} and \eqref{sobolevqrange2}.  Similarly, the inequalities ``$q^{-1}<p^{-1}$'' for $p\ge 2$ and ``$q^{-1}<1-1/p$'' for $p\le 2$ in the proof of (i) follow immediately once we prove the second inequalities of \eqref{sobolevqrange1} and \eqref{sobolevqrange2} here.  We prove the second inequalities of \eqref{sobolevqrange1} and \eqref{sobolevqrange2}.  We first assume that $p\ge 2$.  We have to show that
\begin{equation}
\frac{1}{q}=\frac{1}{\A p}-\frac{s}{\A}-\frac{1}{(\rho-1)(2\A-1)} +\frac{2s_0}{2\A-1} <\frac{1}{p}. \label{qinequality}
\end{equation}
This is equivalent to
\begin{equation}
    s_0<\frac{2\A-1}{2p}-\frac{2\A-1}{2\A p}+\frac{1}{2(\rho-1)}+\frac{2\A-1}{2\A}s.
    \label{qinequality2}
\end{equation}
Now we recall the definition of $\tilde{s}_0$.  We need to take $s_0$ so that $s_0>\tilde{s}_0$ and it is close enough to $\tilde{s}_0$.  In view of \eqref{qinequality2} we can take an $s_0$ so that it lies in the required range and $q$ satisfies \eqref{qinequality} if
\begin{equation}
    \tilde{s}_0 <\frac{2\A-1}{2p}-\frac{2\A-1}{2\A p}+\frac{1}{2(\rho-1)}+\frac{2\A-1}{2\A}s.
\end{equation}
We check this inequality.  It is clear that its right-hand side is positive.  So it remains to show that
\begin{equation*}
    s_l-\frac{2\A-p}{2p} <\frac{2\A-1}{2p}-\frac{2\A-1}{2\A p}+\frac{1}{2(\rho-1)}+\frac{2\A-1}{2\A}s.
\end{equation*}
Since $s\ge s_l$, it is enough to prove the inequality with $s=s_l$; that is,
\begin{equation*}
    \frac{s_l}{2\A} <\frac{2\A-p}{2p}+\frac{2\A-1}{2p}-\frac{2\A-1}{2\A p} +\frac{1}{2(\rho-1)}.
\end{equation*}
By the definition of $s_l$ and a simple computation, we see that this is equivalent to $p<4\A-2$.  Hence we get the desired inequality.
  It remains to show $q^{-1}<1-1/p$ when $p\le 2$.  We have to show that 
\begin{equation*}
\frac{1}{\A p}-\frac{s}{\A}-\frac{1}{(\rho-1)(2\A-1)} +\frac{2s_0}{2\A-1} <1-\frac{1}{p},
\end{equation*}
which is equivalent to 
\begin{equation*}
s_0 <\frac{2\A-1}{2\A}s -\frac{2\A-1}{2\A p}+\frac{1}{2(\rho-1)}+\frac{2\A-1}{2}-\frac{2\A-1}{2p}.
\end{equation*}
As in the case of $p\ge 2$, the desired $s_0$ exists if
\begin{equation*}
   \tilde{s}_0 <\frac{2\A-1}{2\A}s -\frac{2\A-1}{2\A p}+\frac{1}{2(\rho-1)}+\frac{2\A-1}{2}-\frac{2\A-1}{2p}.
\end{equation*}
We first check that the right-hand side of the above inequality is positive.  Since $s\ge s_l$ and $\rho \le 2p+1$
\begin{align*}
    \frac{2\A-1}{2\A}s -\frac{2\A-1}{2\A p}+&\frac{1}{2(\rho-1)}+\frac{2\A-1}{2}-\frac{2\A-1}{2p} \\
    \ge& \frac{2\A-1}{2\A}\left(\frac{\A}{\rho-1}-\frac{\A-1}{p} \right) -\frac{2\A-1}{2\A p}+\frac{1}{2(\rho-1)}+\frac{2\A-1}{2}-\frac{2\A-1}{2p} \\
    =&\frac{\A}{\rho-1}-\frac{2\A-1}{p}+\frac{2\A-1}{2} \\
    \ge & \frac{\A}{2p}-\frac{2\A-1}{p}+\frac{2\A-1}{2}.
\end{align*}
The last term is positive if and only if
\begin{equation*}
    p>\frac{3\A-2}{2\A-1},
\end{equation*}
which is satisfied since when $\A\le 2$ and  $p>\frac{2\A}{2\A-1}$.  It remains to prove
\begin{equation}
    s_l-\frac{(\A-1)(p-1)}{p}<
 \frac{2\A-1}{2\A}s -\frac{2\A-1}{2\A p}+\frac{1}{2(\rho-1)}+\frac{2\A-1}{2}-\frac{2\A-1}{2p}.
\end{equation}
It suffices to show  the inequality with $s=s_l$, which is rewritten as 
\begin{equation*}
    \frac{s_l}{2\A}<\frac{(\A-1)(p-1)}{p}-\frac{2\A-1}{2\A p}+\frac{1}{2(\rho-1)}+\frac{2\A-1}{2}-\frac{2\A-1}{2p},
\end{equation*}
which is equivalent to
\begin{equation*}
    p>\frac{4\A-2}{4\A-3}.
\end{equation*}
This inequality holds since
\begin{equation*}
   \frac{2\A}{2\A-1}>\frac{4\A-2}{4\A-3}.
\end{equation*}
\qed

\bibliographystyle{siam}
\bibliography{main}

\end{document}